\renewcommand*{\backref}[1]{}
\renewcommand*{\backrefalt}[4]{%
    \ifcase #1 (Not cited.)%
    \or        (Cited on page~#2.)%
    \else      (Cited on pages~#2.)%
    \fi}
\newcommand{\ben}{\begin{enumerate}}
\newcommand{\een}{\end{enumerate}}
\newcommand{\bea}{\begin{eqnarray}}
\newcommand{\eea}{\end{eqnarray}}
\newcommand{\bit}{\begin{itemize}}
\newcommand{\eit}{\end{itemize}}
\newcommand{\be}{\begin{equation}}
\newcommand{\ee}{\end{equation}}
\newcommand{\benn}{\begin{equation*}}
\newcommand{\eenn}{\end{equation*}}
\newtheorem{prop}{Proposition}[section]
\newtheorem{lem}{Lemma}[section]
\newtheorem{thm}{Theorem}[section]
\newtheorem{defn}{Definition}
\newtheorem{remark}{Remark}[section]
\newtheorem{example}{Example}[section]
\DeclareMathOperator*{\argmin}{arg\,min}
\DeclareMathOperator*{\argmax}{arg\,max}
\def\bp {\boldsymbol{p}}
\def\bq {\boldsymbol{q}}
\def\bx {\boldsymbol{x}}
\def\by {\boldsymbol{y}}
\def\bv {\boldsymbol{v}}
\def\bu {\boldsymbol{u}}
\def\bW {\boldsymbol{W}}
\def\bolde {\boldsymbol{e}}
\def\R {\mathbb{R}}
\def\N {\mathbb{N}}
\def\dom {\mathrm{dom~}}
\def\inter {\mathrm{int~}}
\def\epi {\mathrm{epi~}}
\def\ri {\mathrm{ri~}}
\def\conv {\mathrm{conv}~}
\def\cobar {\overline{\mathrm{co}}~}
\def\co {\mathrm{co~}}
\def\unitsim {\Lambda}
\def\Rn {\R^n}
\def\gmRn {\Gamma_0(\R^n)}
\def\Laplacian{\Delta_{\boldsymbol x}}
\date{Research supported by NSF DMS 1820821. Authors' names are given in last/family name alphabetical order.}
\newcommand{\revision}[1]{\textcolor{black}{#1}} 
\begin{document}

\title[]{Overcoming the curse of dimensionality for some Hamilton--Jacobi partial differential equations via neural network architectures}

\author[J. Darbon]{J\'er\^ome Darbon}
\address{Division of Applied Mathematics, Brown University.}
\email{jerome\_darbon@brown.edu}

\author[G. P. Langlois]{Gabriel P. Langlois}
\address{Division of Applied Mathematics, Brown University.}
\email{gabriel\_provencher\_langlois@brown.edu}

\author[T. Meng]{Tingwei Meng}
\address{Division of Applied Mathematics, Brown University.}
\email{tingwei\_meng@brown.edu}

\maketitle


\begin{abstract}
We propose new and original mathematical connections between Hamilton-Jacobi (HJ) partial differential equations (PDEs) with initial data and neural network architectures. Specifically, we prove that some classes of neural networks correspond to representation formulas of HJ PDE solutions whose Hamiltonians and initial data are obtained from the parameters of the neural networks. These results do not rely on universal approximation properties of neural networks; rather, our results show that some classes of neural network architectures naturally encode the physics contained in some HJ PDEs. Our results naturally yield efficient neural network-based methods for evaluating solutions of some HJ PDEs in high dimension without using grids or numerical approximations. We also present some numerical results for solving some inverse problems involving HJ PDEs using our proposed architectures.
\end{abstract}

\section{Introduction}
The Hamilton--Jacobi (HJ) equations are an important class of partial differential equation (PDE) models that arise in many scientific disciplines, e.g., physics \cite{Arnold1989Math, Caratheodory1965CalculusI, Caratheodory1967CalculusII, Courant1989Methods,landau1978course}, imaging science \cite{darbon2015convex,darbon2019decomposition,Darbon2016Algorithms}, game theory \cite{BARRON1984213, Buckdahn2011Recent, Evans1984Differential, Ishii1988Representation}, and optimal control \cite{Bardi1997Optimal, Elliott1987Viscosity,fleming1976deterministic,fleming2006controlled,mceneaney2006max}. Exact or approximate solutions to these equations then give practical insight about the models in consideration. We consider here HJ PDEs specified by a Hamiltonian function $H\colon\R^n\to \R$ and \revision{convex} initial data $J\colon\R^n \to \R$
\revision{
\begin{equation} \label{eqn:intro_example}
\begin{dcases} 
\frac{\partial S}{\partial t}(\bx,t)+H(\nabla_{\bx}S(\bx,t)) = 0 & \mbox{{\rm in} }\mathbb{R}^{n}\times(0,+\infty),\\
S(\bx,0)=J(\bx) & \mbox{{\rm in} }\mathbb{R}^{n},
\end{dcases}
\end{equation}
}
where $\frac{\partial S}{\partial t}(\bx,t)$ and $\nabla_{\bx}S(\bx,t) =\left(\frac{\partial S}{\partial x_1} (\bx, t), \dots, \frac{\partial S}{\partial x_n} (\bx, t)\right)$ denote the partial derivative with respect to $t$ and the gradient vector with respect to $\bx$ of the function $(\bx,t) \mapsto S(\bx,t)$, \revision{and the Hamiltonian $H$ only depends on the gradient $\nabla_{\bx}S(\bx,t)$.}

\revision{
Our main motivation is to compute the viscosity solution of certain HJ PDEs of the form of \eqref{eqn:intro_example} in high dimension for a given $\bx \in \R^n$ and $t > 0$ \cite{Bardi1997Optimal, bardi1984hopf, barles1994solutions, crandall1992user} by leveraging new efficient hardware technologies and silicon-based electric circuits dedicated to neural networks. As noted by LeCun in~\cite{lecun2019isscc}, the use of neural networks has been greatly influenced by available hardware. In addition, there has been many initiatives to create new hardware for neural networks that yields extremely efficient (in terms of speed, latency, throughput or energy) implementations: for instance, \cite{farabet-suml-11,farabet-fpl-09,farabet.09.iccvw} propose efficient neural network implementations using field-programmable gate array, \cite{banerjeeEtal2019sfi} optimizes neural network implementations for Intel's architecture and \cite{kundu2019ktanh} provides efficient hardware implementation of certain building blocks widely used in neural networks. It is also worth mentioning that Google created specific hardware, called ``Tensor Processor Unit" \cite{googleTPU17} to implement their neural networks in data centers. Note that Xilinx announced a new set of hardware (Versal AI core) for implementing neural networks while Intel enhances their processors with specific hardware instructions for neural networks. LeCun also suggests in~\cite[Section 3]{lecun2019isscc} possible new trends for hardware dedicated to neural networks. Finally, we refer the reader to \cite{chen2020classification} (see also \cite{Hirjibehedin.20.nature}) that describes the evolution of silicon-based electrical circuits for machine learning.}

\revision{
In this paper, we propose classes of neural network architectures that exactly represent viscosity solutions of certain HJ PDEs of the form of~\eqref{eqn:intro_example}. Our results pave the way to leverage efficient dedicated hardware implementation of neural networks to evaluate viscosity solutions of certain HJ PDEs. 
} 

\bigbreak
\noindent
\revision{\textbf{Related work.}} The viscosity solution to the HJ PDE~\eqref{eqn:intro_example} rarely admits a closed-form expression, and in general it must be computed with numerical algorithms or other methods tailored for the Hamiltonian $H$, initial data $J$, and dimension $n$.

The dimensionality, in particular, matters significantly because in many applications involving HJ PDE models, the dimension $n$ is extremely large. In imaging problems, for example, the vector $\bx$ typically corresponds to a noisy image whose entries are its pixel values, and the associated Hamilton--Jacobi equations describe the solution to an image denoising convex optimization problem \cite{darbon2015convex, darbon2019decomposition}. Denoising a 1080 x 1920 standard full HD image on a smartphone, for example, corresponds to solving a HJ PDE in dimension $n = 1080 \times 1920 = 2,073,600$. 

Unfortunately, standard grid-based numerical algorithms for PDEs are impractical when $n > 4$. Such algorithms employ grids to discretize the spatial and time domain, and the number of grid points required to evaluate accurately solutions of PDEs grows exponentially with the dimension $n$. It is therefore essentially impossible in practice to numerically solve PDEs in high dimension using grid-based algorithms, even with sophisticated high-order accuracy methods for HJ PDEs such as ENO \cite{Osher1991High}, WENO \cite{Jiang2000Weighted}, and DG \cite{Hu1999Discontinuous}. This problem is known as the \textit{curse of dimensionality} \cite{bellman1961adaptive}.

Overcoming the curse of dimensionality in general remains an open problem, but for HJ PDEs several methods have been proposed to solve it. These include, but are not limited to, max-plus algebra methods \cite{akian2006max,akian2008max, dower2015max,Fleming2000Max,gaubert2011curse,mceneaney2006max,McEneaney2007COD,mceneaney2008curse,mceneaney2009convergence}, dynamic programming and reinforcement learning \cite{alla2019efficient,bertsekas2019reinforcement}, tensor decomposition techniques \cite{dolgov2019tensor,horowitz2014linear,todorov2009efficient}, sparse grids \cite{bokanowski2013adaptive,garcke2017suboptimal,kang2017mitigating}, model order reduction \cite{alla2017error,kunisch2004hjb}, polynomial approximation \cite{kalise2019robust,kalise2018polynomial}, optimization methods \cite{darbon2015convex,darbon2019decomposition,Darbon2016Algorithms,yegorov2017perspectives} and neural networks \revision{\cite{bachouch2018deep, Djeridane2006Neural,jiang2016using, Han2018Solving, hure2018deep, hure2019some, lambrianides2019new, Niarchos2006Neural, reisinger2019rectified,royo2016recursive, ruthotto2019machine, Sirignano2018DGM}}. 
Among these methods, neural networks have become increasingly popular tools to solve PDEs \revision{\cite{bachouch2018deep,beck2018solving, beck2019deep, beck2019machine, Berg2018Unified,chan2019machine, Cheng2006Fixed, Djeridane2006Neural, Dissanayake1994Neural,  dockhorn2019discussion, E2017Deep, Farimani2017Deep, Fujii2019Asymptotic, grohs2019deep, grune2020overcoming, Han2018Solving, han2019solving, hsieh2018learning, hure2018deep, hure2019some, hutzenthaler2019proof, jianyu2003numerical, khoo2017solving, khoo2019solving, Lagaris1998ANN, Lagaris2000NN, lambrianides2019new, lee1990neural, lye2019deep, McFall2009ANN, Meade1994Numerical, Milligen1995NN, Niarchos2006Neural, pham2019neural, reisinger2019rectified, royo2016recursive, Rudd2014Constrained, ruthotto2019machine, Sirignano2018DGM, Tang2017Study, Tassa2007Least, weinan2018deep, Yadav2015Intro, yang2018physics, yang2019adversarial}} and inverse problems involving PDEs \cite{long2017pde,long2019pde, meng2019composite, meng2019ppinn, pang2019fpinns, raissi2018deep,raissi2018forward,raissi2017physicsi,raissi2017physicsii,Raissi2019PINN, uchiyama1993solving, yang2018physics, zhang2019learning, zhang2019quantifying}. Their popularity is due to universal approximation theorems that state that neural networks can approximate broad classes of (high-dimensional, nonlinear) functions on compact sets \cite{Cybenko1989Approximation, Hornik1991Approximation, Hornik1989Multilayer,Pinkus1999}. These properties, in particular, have been recently leveraged to approximate solutions to high-dimensional nonlinear HJ PDEs \cite{Han2018Solving,Sirignano2018DGM} and for the development of physics-informed neural networks that aim to solve supervised learning problems while respecting any given laws of physics described by a set of nonlinear PDEs \cite{Raissi2019PINN}. 

In this paper, we propose some neural network architectures that exactly represent viscosity solutions to HJ PDEs of the form of \eqref{eqn:intro_example}, where the Hamiltonians and initial data are obtained from the parameters of the neural network architectures. \revision{Recall our results require the initial data $J$ to be convex and the Hamiltonian $H$ to only depend on the gradient $\nabla_{\bx}S(\bx,t)$ (see Eq.~\eqref{eqn:intro_example}).} In other words, we show that some neural networks correspond to \revision{exact} representation formulas of HJ PDE solutions. \revision{To our knowledge, this is the first result that shows that certain neural networks can exactly represent solutions of certain HJ PDEs.}

\revision{Note that an alternative method to numerically evaluate solutions of HJ PDEs of the form of \eqref{eqn:intro_example} with convex initial data has been proposed in~\cite{Darbon2016Algorithms}.  This method relies on the Hopf formula and is only based on optimization. Therefore, this method is grid and approximation free, and works well in high dimension. Contrary to~\cite{Darbon2016Algorithms}, our proposed approach does not rely on any (possibly non-convex) optimization techniques.}

\bigbreak
\noindent
\textbf{Contributions of this paper.} 
In this paper, we prove that some classes of shallow neural networks are, under certain conditions, viscosity solutions to Hamilton--Jacobi equations. \revision{The main result of this paper is Thm.~\ref{thm:constructHJ}. We show in this theorem that the neural network architecture illustrated in Fig.~\ref{fig:nn_max} represents, under certain conditions, the viscosity solution to a set of first-order HJ PDEs of the form of \eqref{eqn:intro_example}, where the Hamiltonians and the convex initial data are obtained from the parameters of the neural network.} As a corollary of this result for the one-dimensional case, we propose a second neural network architecture (illustrated in Fig. \ref{fig:nn_argmax}) that represents the spatial gradient of the viscosity solution of the HJ PDE above in 1D and show in Proposition \ref{thm:conservation} that under appropriate conditions, this neural network corresponds to entropy solutions of some conservation laws in 1D. 

\revision{Let us emphasize that the proposed architecture in Fig.~\ref{fig:nn_max} for representing solutions to HJ PDEs allows us to numerically evaluate their solutions in high dimension without using grids.}

We also stress that our results do not rely on universal approximation properties of neural networks. \revision{Instead, our results show that the physics contained in HJ PDEs satisfying the conditions of Thm.~\ref{thm:constructHJ} can naturally be encoded by the neural network architecture depicted in Fig.~\ref{fig:nn_max}. Our results further suggest interpretations of this neural network architecture in terms of solutions to PDEs.}

We also test the proposed neural network architecture \revision{(depicted in Fig.~\ref{fig:nn_max})} on some inverse problems. To do so, we consider the following problem. Given training data sampled from the solution $S$ of a first-order HJ PDE \eqref{eqn:intro_example} with unknown \revision{convex} initial function $J$ and Hamiltonian $H$, we aim to recover the unknown initial function. After the training process using the Adam optimizer, the trained neural network with input time variable $t=0$ gives an approximation to the \revision{convex} initial function $J$. Moreover, the parameters in the trained neural network also provide partial information on the Hamiltonian $H$. The parameters only approximate the Hamiltonian at certain points, however, and therefore do not give complete information about the function. We show the experimental results on several examples. Our numerical results show that this problem cannot generally be solved using Adam optimizer with high accuracy. In other words, while \revision{our} theoretical results \revision{(see Thm.~\ref{thm:constructHJ})} show that the neural network representation \revision{(depicted in Fig.~\ref{fig:nn_max})} to some HJ PDEs is exact, the Adam optimizer for training the proposed networks in this paper sometimes gives large errors in some of our inverse problems, and as such there is no guarantee that the \revision{Adam optimizer} works well for the proposed network.

\bigbreak
\noindent

\textbf{Organization of this paper.}
In Sect. \ref{sec:background}, we briefly review shallow neural networks and concepts of convex analysis that will be used throughout this paper. \revision{In Sect. \ref{sec:archNN_theory}, we establish connections between the neural network architecture illustrated in Fig.~\ref{fig:nn_max} and viscosity solutions to HJ PDEs of the form of~\eqref{eqn:intro_example}, and the neural network architecture illustrated in Fig.~\ref{fig:nn_argmax} and one-dimensional conservation laws.} The mathematical set-up for establishing these connections is described in Sect. \ref{subsec:set-up}, our main results, which concern first-order HJ PDEs, are described in Sect. \ref{subsec:main}, \revision{and an extension of these results to one-dimensional conservation laws is presented in Sect. \ref{sec:conservation}}. \revision{In Sect. \ref{sec:numerical}, we perform numerical experiments to test the effectiveness of the Adam optimizer using our proposed architecture (depicted in Fig.~\ref{fig:nn_max}) for solving some inverse problems.} Finally, we draw some conclusions and directions for future work in Sect. \ref{sec:conclusion}. \revision{Several appendices contain proofs of our results.}
\section{Background}
\label{sec:background}
In this section, we introduce mathematical concepts that will be used in this paper. We review the standard structure of shallow neural networks from a mathematical point of view in Sect. \ref{sec:bkgd_nn} and present some fundamental definitions and results in convex analysis in Sect. \ref{sec:bkgd_convex}. For the notation, we use $\R^n$ to denote the $n$-dimensional Euclidean space. The Euclidean scalar product and Euclidean norm on $\mathbb{R}^{n}$ are denoted by $\left\langle \cdot,\cdot\right\rangle$ and $\left\Vert \cdot\right\Vert _{2}$. The set
of matrices with $m$ rows and $n$ columns with real entries is denoted by $\mathcal{M}_{m,n}(\R)$.

\subsection{Shallow neural networks}
\label{sec:bkgd_nn}
\def\sigmoid {\mathrm{sigmoid}}
\def\relu {\mathrm{ReLU}}
\def\softmax {\mathrm{softmax}}
\def\varzt {\tilde{z}}
\def\FunSet{\mathcal{F}}
\def\bw {\boldsymbol{w}}
\def\bb {\boldsymbol{b}}
\def\barf{\bar{f}}
\def\barbw{\bar{\bw}}
\def\barc{\bar{c}}
\def\barb{\bar{b}}
\def\bara{\bar{a}}
\def\nin{n}

Neural networks provide architectures for constructing complicated nonlinear functions from simple building blocks. Common neural network architectures in applications include, for example, feedforward neural networks in statistical learning, recurrent neural networks in natural language processing, and convolutional neural networks in imaging science. In this paper, we focus on shallow neural networks, a subclass of feedforward neural networks that typically consist of one hidden layer and one output layer. We give here a brief mathematical introduction to shallow neural networks. For more details, we refer the reader to \cite{Goodfellow2016Deep, Lecun2015Deep, Schmidhuber2015Deep} and the references listed therein.

A shallow neural network with one hidden layer and one output layer is a composition of affine functions with a nonlinear function. A hidden layer with $m \in \mathbb{N}$ neurons comprises $m$ affine functions of an \textit{input} $\bx \in \R^n$ with \textit{weights} $\bw_i \in \R^n$ and \textit{biases} $b_i \in \R$:
\begin{equation*}
    \R^n \times \R^n \times \R \ni (\bx,\bw_i,b_i) \mapsto \langle \bw_i, \bx\rangle + b_i.
\end{equation*}
These $m$ affine functions can be succinctly written in vector form as $\bW\bx + \boldsymbol{b}$, where the matrix $\bW \in \mathcal{M}_{m,n}(\R)$ has for rows the weights $\bw_i$ and the vector $b \in \R^m$ has for entries the biases $b_i$. The output layer comprises a nonlinear function $\sigma \colon \R^m \to \R$ that takes for input the vector $\bW\bx + \boldsymbol{b}$ of affine functions and gives the number
\begin{equation*}
    \R^n \times \R^n \times \R \ni (\bx,\bw_i,b_i) \mapsto \sigma \left(\bW\bx + \boldsymbol{b}\right).
\end{equation*}
The nonlinear function $\sigma$ is called the \textit{activation function} of the output layer. 

In Sect. \ref{sec:numerical}, we will consider the following problem: Given data points $\{(\bx_i, y_i)\}_{i=1}^N \subset \R^{n}\times \R$, infer the relationship between the input $\bx_i$'s and the output $y_i$'s. To infer this relation, we assume that the output takes the form (or can be approximated by) $y_i = \sigma\left(\bW\bx_i + \boldsymbol{b}\right)$ for some known activation function $\sigma$, unknown matrix of weights $\bW \in \mathcal{M}_{m,n}(\R)$, and unknown vector of bias $\boldsymbol{b}$. A standard approach to solve such a problem is to estimate the weights $\bw_i$ and biases $b_i$ so as to minimize the mean square error
\begin{equation}
    \{(\barbw_i, \barb_i)\}_{i=1}^{m} \in \argmin_{\{(\bw_i, b_i)\}_{i=1}^{m} \subset \R^n \times \R}\left\{\frac{1}{N}\sum_{i=1}^{N}\left(\sigma \left(\bW\bx_i + \boldsymbol{b}\right) - y_i\right)^2\right\}.
\end{equation}
In the field of machine learning, solving this minimization problem is called the \textit{learning} or \textit{training process}. The data $\{(\bx_i, y_i)\}_{i=1}^N$ used in the training process is called \textit{training data}. Finding a global minimizer is generally difficult due to the complexity of the minimization problem and that the objective function is not convex with respect to the weights and biases. State-of-the-art algorithms for solving these problems are stochastic gradient descent based methods with momentum acceleration, such as the Adam optimizer for neural networks \cite{Kingma2015Adam}. This algorithm will be used in our numerical experiments. 

\subsection{Convex analysis} \label{sec:bkgd_convex}
We introduce here several definitions and results of convex analysis that will be used in this paper. We refer readers to Hiriart\textendash Urruty and Lemar\'echal \citep{hiriart2013convexI,hiriart2013convexII} and Rockafellar \cite{rockafellar1970convex} for comprehensive references on finite-dimensional convex analysis. 

\begin{defn}
\label{def:closure_int}(Convex sets, relative interiors, and convex hulls) A set $C\subset\mathbb{R}^{n}$ is called convex if for any $\lambda\in[0,1]$ and any $\boldsymbol{x},\boldsymbol{y}\in C$, the element $\lambda\boldsymbol{x} + (1-\lambda)\boldsymbol{y}$ is in $C$. The relative interior of a convex set $C\subset\mathbb{R}^{n}$, denoted by $\ri C$, consists of the points in the interior of the unique smallest affine set containing $C$. The convex hull of a set $C$, denoted by $\conv{C}$, consists of all the convex combinations of the elements of $C$. An important example of a convex hull is the unit simplex in $\R^n$, which we denote by
\begin{equation}\label{eqt:def_unitsimplex}
    \unitsim_n \coloneqq \left\{(\alpha_1, \dots, \alpha_n)\in [0,1]^n:\ \sum_{i=1}^n \alpha_i = 1\right\}.
\end{equation}
\end{defn} 

\begin{defn}
\label{def:domains_prop}(Domains and proper functions) The domain of a function $f\colon\mathbb{R}^{n}\to\mathbb{R}\cup\{+\infty\}$ is the set
\[
\dom f=\left\{ \boldsymbol{x}\in\mathbb{R}^{n}:f(\boldsymbol{x})<+\infty\right\} .
\]
A function $f$ is called proper if its domain is non-empty and $f(\boldsymbol{x})>-\infty$ for every $\boldsymbol{x}\in\mathbb{R}^{n}$. 
\end{defn}

\begin{defn}
\label{def:convex}(Convex functions, lower semicontinuity, and convex envelopes)
A proper function $f\colon\mathbb{R}^{n}\to\mathbb{R}\cup\{+\infty\}$ is called convex if the set $\dom f$ is convex and if for any $\boldsymbol{x},\boldsymbol{y}\in\dom f$ and all $\lambda\in[0,1]$, there holds
\begin{equation}
f(\lambda\boldsymbol{x}+(1-\lambda)\boldsymbol{y})\leqslant\lambda f(\boldsymbol{x})+(1-\lambda)f(\boldsymbol{y})\label{eq:convex_def}
\end{equation}

A proper function $f\colon\mathbb{R}^{n}\to\mathbb{R}\cup\{+\infty\}$ is called lower semicontinuous if for every sequence $\left\{ \boldsymbol{x}_{k}\right\} _{k=1}^{+\infty}\in\mathbb{R}^{n}$ with $\lim_{k\to+\infty}\boldsymbol{x}_{k}=\boldsymbol{x}\in\mathbb{R}^{n}$, we have $\liminf_{k\to+\infty}f(\boldsymbol{x}_{k})\geqslant f(\boldsymbol{x})$.

The class of proper, lower semicontinuous convex functions is denoted by $\Gamma_{0}(\mathbb{R}^{n})$.

Given a function $f\colon\R^n \to \R\cup\{+\infty\}$, we define its convex envelope $\co{f}$ as the largest convex function such that $\co{f}(\bx)\leqslant f(\bx)$ for every $\bx \in \R^n$. We define the convex lower semicontinuous envelope $\cobar{f}$ as the largest convex and lower semicontinuous function such that $\cobar{f}(\bx) \leqslant f(\bx)$ for every $\bx \in \R^n$.
\end{defn}

\begin{defn}
\label{def:subgrad}(Subdifferentials and subgradients) The subdifferential
$\partial f(\boldsymbol{x})$ of $f\in\Gamma_{0}(\mathbb{R}^{n})$
at $\boldsymbol{x}\in\dom f$ is the set (possibly empty) of vectors
$\boldsymbol{p}\in\mathbb{R}^{n}$ satisfying
\begin{equation}\label{eq:subgrad_def}
\forall\boldsymbol{y}\in\mathbb{R}^{n},\mbox{ }f(\boldsymbol{y})\geqslant f(\boldsymbol{x})+\left\langle \boldsymbol{p},\boldsymbol{y}-\boldsymbol{x}\right\rangle.
\end{equation}
The subdifferential $\partial f(\boldsymbol{x})$ is a closed convex set whenever it is non\textendash empty, and any vector $\boldsymbol{p}\in\partial f(\boldsymbol{x})$ is called a subgradient of $f$ at $\boldsymbol{x}$. If $f$ is a proper convex function, then $\partial f(\boldsymbol{x})\neq\emptyset$ whenever $\boldsymbol{x}\in\ri(\dom f)$, and $\partial f(\boldsymbol{x})=\emptyset$ whenever $\boldsymbol{x}\notin\dom J$ \cite[Thm. 23.4]{rockafellar1970convex}. If a convex function $f$ is differentiable at $\boldsymbol{x}_{0}\in\mathbb{R}^{n}$, then its gradient $\nabla_{\boldsymbol{x}}f(\boldsymbol{x}_{0})$ is the unique subgradient of $f$ at $\boldsymbol{x}_{0}$, and conversely if $f$ has a unique subgradient at $\boldsymbol{x}_{0}$, then $f$ is differentiable at that point \cite[Thm. 21.5]{rockafellar1970convex}.
\end{defn}

\begin{defn}
\label{def:legendre_t}(Fenchel--Legendre transforms) Let $f\in\Gamma_{0}(\mathbb{R}^{n})$.
The Fenchel--Legendre transform $f^{*}\colon\mathbb{R}^{n}\to\mathbb{R}\cup\{+\infty\}$ of $f$ is defined as
\begin{equation}
f^{*}(\boldsymbol{p})=\sup_{\boldsymbol{x}\in\mathbb{R}^{n}}\left\{ \left\langle \boldsymbol{p},\boldsymbol{x}\right\rangle -f(\boldsymbol{x})\right\} .\label{eq:fenchel_t_def}
\end{equation}
For any $f\in\Gamma_{0}(\mathbb{R}^{n})$, the mapping $f\mapsto f^{*}$ is one-to-one, $f^{*}\in\Gamma_{0}(\mathbb{R}^{n})$, and $(f^{*})^{*}=f$. Moreover, for any $(\boldsymbol{x},\boldsymbol{p})\in\mathbb{R}^{n}\times\mathbb{R}^{n}$, the so-called Fenchel's inequality holds:
\begin{equation}
f(\boldsymbol{x})+f(\boldsymbol{p})\geqslant\left\langle \boldsymbol{x},\boldsymbol{p}\right\rangle ,\label{eq:fenchel_ineq}
\end{equation}
with equality attained if and only if $\boldsymbol{p}\in\partial f(\boldsymbol{x})$, if and only if $\boldsymbol{x}\in\partial f^{*}(\boldsymbol{p})$ \cite[Cor. X.1.4.4]{hiriart2013convexII}.
\end{defn}

We summarize some notations and definitions in Tab. \ref{tab:notations}.
\newcolumntype{s}{>{\hsize=.7\hsize}X}
\begin{table}[!h]
\centering
 \caption{Notation used in this paper. Here, we use $C$ to denote a set in $\R^n$, $f$ to denote a function from $\R^n$ to $\R\cup\{+\infty\}$ and $\bx$ to denote a vector in $\Rn$.}
 \label{tab:notations}
\begin{tabularx}{\textwidth}{ l|s|X }
\hline
\noalign{\smallskip}
 Notation & Meaning & Definition\\
 \noalign{\smallskip}
 \hline
 \noalign{\smallskip}

 $\left\langle \cdot,\cdot\right\rangle $ & 
 Euclidean scalar product in $\mathbb{R}^{n}$ & $\langle \bx, \by\rangle \coloneqq \sum_{i=1}^n x_iy_i$
 \\
 $\left\Vert \cdot\right\Vert _{2}$ &
 Euclidean norm in $\mathbb{R}^{n}$ & $\left\Vert \bx\right\Vert _{2} \coloneqq \sqrt{\langle \bx, \bx\rangle}$
 \\
 $\ri C$ & Relative interior of $C$ & The interior of $C$ with respect to the minimal hyperplane containing $C$ in $\Rn$
 \\
 $\conv{C}$  & Convex hull of $C$ & The set containing all convex combinations of the elements of $C$
 \\
 $\unitsim_n$ & Unit simplex in $\R^n$ & $\left\{(\alpha_1, \dots, \alpha_n)\in [0,1]^n:\ \sum_{i=1}^n \alpha_i = 1\right\}$
 \\

 $\dom f$ & Domain of $f$ & $\{\bx\in \Rn:\ f(\bx) < +\infty\}$
 \\
 $\gmRn$ & A useful and standard class of convex functions & The set containing all proper, convex, lower semicontinuous functions from $\Rn$ to $\R\cup\{+\infty\}$
 \\
 $\co{f}$ & Convex envelope of $f$ & The largest convex function such that $\co{f}(\bx)\leqslant f(\bx)$ for every $\bx \in \R^n$
 \\
 $\cobar f$ & Convex and lower semicontinuous envelope of $f$ &
 The largest convex and lower semicontinuous function such that $\cobar{f}(\bx) \leqslant f(\bx)$ for every $\bx \in \R^n$
 \\
 $\partial f(\bx)$ & Subdifferential of $f$ at $\bx$ & $\{\bp\in\Rn:\ f(\by)\geqslant f(\bx) + \langle \bp, \by-\bx\rangle \ \forall \by\in\Rn\}$
 \\
 $f^*$ & Fenchel--Legendre transform of $f$ & $f^*(\bp) \coloneqq \sup_{\bx\in \Rn} \{\langle \bp, \bx\rangle - f(\bx)\}$
 \\
 \noalign{\smallskip}
\hline
 \end{tabularx}
\end{table}
\section{Connections between neural networks and Hamilton--Jacobi equations} \label{sec:archNN_theory}

This section establishes connections between HJ PDEs and neural network architectures.
Subsection~\ref{subsec:set-up} presents the mathematical set-up, subsection~\ref{subsec:main} describes our main results for first-order HJ PDEs, and finally subsection~\ref{sec:conservation} presents our results for first-order one dimensional conservation laws.

\subsection{Set-up} \label{subsec:set-up}

\begin{figure}[ht]
\includegraphics[width=\textwidth]{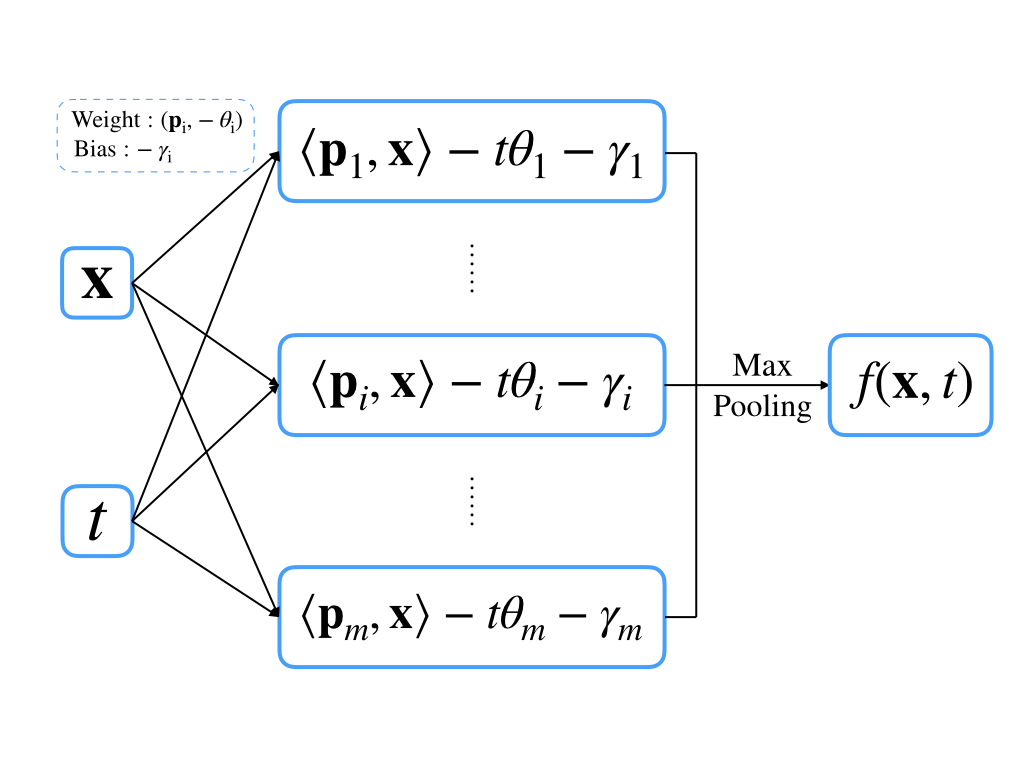}
\caption{Illustration of the structure of the neural network \eqref{eqt:deff} that can represent the viscosity solution to first-order Hamilton\textendash Jacobi equations.}
\label{fig:nn_max}
\end{figure}
In this section, we consider the function $f\colon\ \R^n\times [0,+\infty)\to \R$ given by the neural network in Fig. \ref{fig:nn_max}. Mathematically, the function $f$ can be expressed using the following formula
\begin{equation}\label{eqt:deff}
f(\bx, t; \{(\bp_{i}, \theta_i, \gamma_{i})\}_{i=1}^m) = 
\max_{i\in\{1,\dots,m\}}\{\left\langle \boldsymbol{p}_{i},\boldsymbol{x}\right\rangle -t\theta_{i}-\gamma_{i}\}.
\end{equation}
Our goal is to show that the function $f$ in \eqref{eqt:deff} is the unique uniformly continuous viscosity solution to a suitable Hamilton--Jacobi equation. In what follows we denote $f(\bx,t; \{(\boldsymbol{p}_{i}, \theta_i, \gamma_{i})\}_{i=1}^m)$ by $f(\bx,t)$ when there is no ambiguity in the parameters.

We adopt the following assumptions on the parameters:
\begin{itemize}
    \item[(A1)] The parameters $\{\bp_i\}_{i=1}^{m}$ are pairwise distinct, i.e., $\bp_i\neq \bp_j$ if $i\neq j$.
    \item[(A2)] There exists a convex function $g\colon\ \R^n\to \R$ such that $g(\bp_i) = \gamma_i$.
    \item[(A3)] For any $j\in \{1,\dots,m\}$ and any $(\alpha_1,\dots, \alpha_m) \in \R^m$ that satisfy
    \begin{equation}\label{eqt:assumption3}
        \begin{cases}
        (\alpha_1,\dots, \alpha_m)\in\unitsim_m \text{ with }\alpha_j = 0,\\
        \sum_{i\neq j}\alpha_i \bp_i = \bp_j,\\
        \sum_{i\neq j}\alpha_i \gamma_i = \gamma_j,
        \end{cases}
    \end{equation}
    there holds $\sum_{i\neq j}\alpha_i \theta_i > \theta_j$.
\end{itemize}

Note that (A3) is not a strong assumption. Indeed, if there exist $j\in\{1,\dots,m\}$ and $(\alpha_1,\dots, \alpha_m)\in \R^m$ satisfying Eq. \eqref{eqt:assumption3} and $\sum_{i\neq j}\alpha_i \theta_i \leqslant \theta_j$, then
\begin{equation*}
    \langle \bp_j,\bx\rangle - t\theta_j - \gamma_j
    \leqslant \sum_{i\neq j} \alpha_i(\langle \bp_i,\bx\rangle - t\theta_i - \gamma_i)
    \leqslant \max_{i\neq j}\{\left\langle \boldsymbol{p}_{i},\boldsymbol{x}\right\rangle -t\theta_{i}-\gamma_{i}\}.
\end{equation*}
As a result, the j\textsuperscript{th} neuron in the network can be removed without changing the value of $f(\bx,t)$ for any $\bx\in\R^n$ and $t\geqslant 0$. Removing all such neurons in the network, we can therefore assume (A3) holds.

Our aim is to identify the HJ equations whose viscosity solutions correspond to the neural network $f$ defined by Eq.~\eqref{eqt:deff}. Here, $\bx$ and $t$ play the role of the spatial and time variables, and $f(\cdot,0)$ corresponds to the initial data. To simplify the notation, we define the function $J\colon\ \R^n\to \R$ as
\begin{equation}\label{eqt:defJ}
    f(\bx,0) = J(\bx)\coloneqq
    \max_{i\in\{1,\dots,m\}}\{\left\langle \boldsymbol{p}_{i},\boldsymbol{x}\right\rangle -\gamma_{i}\}
\end{equation}
and the set $I_{\bx}$ as the collection of maximizers in Eq.~\eqref{eqt:defJ} at $\bx$, that is,
\begin{equation}\label{eqt:defIx}
    I_{\bx}\coloneqq 
    \argmax_{i\in\{1,\dots,m\}}\{\left\langle \boldsymbol{p}_{i},\boldsymbol{x}\right\rangle -\gamma_{i}\}.
\end{equation}
\revision{Note that the initial data $J$ given by \eqref{eqt:defJ} is a convex and polyhedral function, and it} satisfies several properties that we describe in the following lemma.

\begin{lem}\label{lem:formulaJstar}
Suppose $\{(\bp_i,\gamma_i)\}_{i=1}^{m}\subset \R^n \times \R$ satisfy assumptions (A1) and (A2). Then the following statements hold.
\begin{itemize}
    \item[(i)] The Fenchel--Legendre transform of $J$ is given by the convex and lower semicontinuous function
    \begin{equation}\label{eqt:defJstar}
        J^*(\bp) = \begin{dcases}
        \min_{\substack{(\alpha_{1},\dots,\alpha_{m})\in\unitsim_m\\
        \sum_{i=1}^{m}\alpha_{i}\bp_{i}=\bp
        }
        }\left\{\sum_{i=1}^{m}\alpha_{i}\gamma_{i}\right\}, &\text{\rm if }\bp \in \conv(\{\bp_i\}_{i=1}^{m}),\\
        +\infty, &\text{\rm otherwise}.
        \end{dcases}
    \end{equation}
Moreover, its restriction to $\dom J^*$ is continuous, and the subdifferential $\partial J^*(\bp)$ is non-empty for every $\bp\in\dom J^*$.

\item[(ii)] Let $\bp\in \dom J^*$ and $\bx\in\partial J^*(\bp)$. Then $(\alpha_1,\dots,\alpha_m)\in \R^m$ is a minimizer in Eq.~\eqref{eqt:defJstar} if and only if it satisfies the constraints
\begin{itemize}
    \item[(a)] $(\alpha_1, \dots, \alpha_m)\in\unitsim_m$,
    \item[(b)] $\sum_{i=1}^{m}\alpha_{i}\bp_{i}=\bp$,
    \item[(c)] $\alpha_i=0$ for any $i\not\in I_{\bx}$.
\end{itemize}
\item[(iii)] For each $i,k\in\{1,\dots,m\}$, let
\begin{equation*}
    \alpha_i = \delta_{ik}\coloneqq\begin{cases}
    1, &\text{\rm if }i=k,\\
    0, &\text{\rm if }i\neq k.
    \end{cases}
\end{equation*}
Then $(\alpha_1,\dots, \alpha_m)$ is a minimizer in Eq.~\eqref{eqt:defJstar} at the point $\bp=\bp_k$. Hence, we have $J^*(\bp_k) = \gamma_k$.
\end{itemize}
\end{lem}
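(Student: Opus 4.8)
The plan is to prove the three parts in order, since part (ii) uses (i) and part (iii) uses (ii).

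For part (i), I would start from the defining formula $J(\bx) = \max_i \{\langle \bp_i, \bx\rangle - \gamma_i\}$ and compute $J^*$ directly. Writing $J = \max_i \ell_i$ where $\ell_i(\bx) = \langle \bp_i, \bx\rangle - \gamma_i$ is affine, one has $\ell_i^*(\bp) = \gamma_i$ if $\bp = \bp_i$ and $+\infty$ otherwise; equivalently, $J$ is the (lower semicontinuous) convex envelope of the function that equals $\gamma_i$ at each $\bp_i$-indexed affine piece, so its conjugate should be the convex envelope of the function sending $\bp_i \mapsto \gamma_i$ and $+\infty$ elsewhere. The clean way: the conjugate of a pointwise max of functions is the convex lower semicontinuous envelope of the pointwise min of the conjugates, i.e. $J^* = \cobar{(\min_i \ell_i^*)}$. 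Computing the convex envelope of the function taking value $\gamma_i$ at $\bp_i$ (for $i = 1, \dots, m$) and $+\infty$ off $\{\bp_i\}$ gives exactly the inf-convolution / Jensen formula in \eqref{eqt:defJstar}: on $\conv(\{\bp_i\})$ it is the lower convex hull $\min\{\sum_i \alpha_i \gamma_i : (\alpha_i) \in \unitsim_m, \sum_i \alpha_i \bp_i = \bp\}$, and $+\infty$ elsewhere. Since the feasible set of $(\alpha_i)$ is a nonempty compact polytope (nonempty precisely when $\bp \in \conv(\{\bp_i\})$) and the objective is linear, the min is attained, so $J^*$ is real-valued and finite on the polytope $\conv(\{\bp_i\})$; a finite convex function on a polytope is continuous on that polytope, and being a finite convex function on a polyhedral set it is polyhedral hence closed, which gives lower semicontinuity. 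Finally $\partial J^*(\bp) \neq \emptyset$ for $\bp \in \dom J^*$: every $\bp_i$ is an exposed point direction, and more generally for any $\bp$ in the polytope, $J^*$ is polyhedral, and a polyhedral convex function has nonempty subdifferential at every point of its domain (this is a standard fact, or one can note $J^{**} = J$ is finite everywhere so $\partial J^*(\bp) = \argmax_{\bx}\{\langle \bp, \bx\rangle - J(\bx)\}$ is nonempty because... actually this needs care). The subtlety: to guarantee $\partial J^*(\bp) \neq \emptyset$ even on the relative boundary of $\dom J^*$, I will invoke that $J^*$ is a polyhedral function (finite on a polytope, hence expressible via finitely many linear inequalities), and polyhedral convex functions are subdifferentiable throughout their domain — this is where I'd cite Rockafellar (around Thm. 23.10 / 24.something).

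For part (ii), fix $\bp \in \dom J^*$ and $\bx \in \partial J^*(\bp)$. By Fenchel's equality (Def.~\ref{def:legendre_t}), $\bx \in \partial J^*(\bp)$ is equivalent to $\bp \in \partial J(\bx)$, equivalent to $J(\bx) + J^*(\bp) = \langle \bx, \bp\rangle$. Now I claim $(\alpha_i)$ is a minimizer in \eqref{eqt:defJstar} iff (a)–(c) hold. For the forward direction: suppose $(\alpha_i)$ is feasible and optimal, so $J^*(\bp) = \sum_i \alpha_i \gamma_i$ with $\sum_i \alpha_i \bp_i = \bp$, $(\alpha_i) \in \unitsim_m$. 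Then
\begin{equation*}
J(\bx) = \langle \bx, \bp\rangle - J^*(\bp) = \sum_i \alpha_i (\langle \bp_i, \bx\rangle - \gamma_i) \leqslant \sum_i \alpha_i \max_k \{\langle \bp_k, \bx\rangle - \gamma_k\} = J(\bx),
\end{equation*}
so equality holds throughout, forcing $\alpha_i (\langle \bp_i, \bx\rangle - \gamma_i) = \alpha_i J(\bx)$ for each $i$; hence whenever $\alpha_i > 0$ we must have $\langle \bp_i, \bx\rangle - \gamma_i = J(\bx)$, i.e. $i \in I_{\bx}$, which is (c). Conversely, if $(\alpha_i)$ satisfies (a)–(c), then $\sum_i \alpha_i(\langle \bp_i, \bx\rangle - \gamma_i) = \sum_{i \in I_{\bx}} \alpha_i J(\bx) = J(\bx)$, so $\sum_i \alpha_i \gamma_i = \langle \bx, \sum_i \alpha_i \bp_i\rangle - J(\bx) = \langle \bx, \bp\rangle - J(\bx) = J^*(\bp)$ (using (b) and Fenchel equality), which shows $(\alpha_i)$ attains the minimum (feasibility from (a),(b)). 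Note (c) presupposes that there is at least one feasible $(\alpha_i)$ supported on $I_{\bx}$ with $\sum \alpha_i \bp_i = \bp$; this is automatic from the existence of a minimizer established in (i) together with the forward direction.

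For part (iii), specialize to $\bp = \bp_k$ and $\alpha_i = \delta_{ik}$. This clearly satisfies (a) and (b) ($\sum_i \delta_{ik}\bp_i = \bp_k$). To conclude via (ii) that it is a minimizer, I need a point $\bx \in \partial J^*(\bp_k)$ with $k \in I_{\bx}$, i.e. with $\langle \bp_k, \bx\rangle - \gamma_k = J(\bx)$; equivalently $\bp_k \in \partial J(\bx)$. Because the $\bp_i$ are pairwise distinct (A1) and $g$ with $g(\bp_i) = \gamma_i$ is convex (A2), the point $(\bp_k, \gamma_k)$ is not in the convex hull of the other points lying strictly below the graph... more simply: $J^*$ agrees with (is bounded below by the epigraph argument and equals) a function whose value at $\bp_k$ is $\gamma_k$ — but actually the cleanest route is: $J^*(\bp_k) \leqslant \gamma_k$ trivially (take $\alpha = \delta_{\cdot k}$), and $J^*(\bp_k) \geqslant \gamma_k$ because for any feasible $(\alpha_i)$ with $\sum_i \alpha_i \bp_i = \bp_k$, convexity of $g$ and Jensen give $\gamma_k = g(\bp_k) = g(\sum_i \alpha_i \bp_i) \leqslant \sum_i \alpha_i g(\bp_i) = \sum_i \alpha_i \gamma_i$; taking the min over feasible $(\alpha_i)$ gives $\gamma_k \leqslant J^*(\bp_k)$. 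Hence $J^*(\bp_k) = \gamma_k$ and $\delta_{\cdot k}$ is a minimizer.

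The main obstacle I anticipate is part (i) — specifically, rigorously deriving the formula \eqref{eqt:defJstar} for $J^*$ and, within it, establishing nonemptiness of $\partial J^*(\bp)$ at every point of $\dom J^*$, including relative-boundary points of the polytope $\conv(\{\bp_i\})$. The formula itself is the Legendre transform of a polyhedral max, which one can get either by a direct sup/min interchange (writing $J(\bx)$ via the Lagrangian over the simplex and swapping $\sup_\bx$ with $\min_\alpha$, justified by a minimax/compactness argument) or by invoking the conjugate-of-max / convex-envelope-of-min duality; keeping that interchange rigorous (and explaining why $+\infty$ off the hull) is the fiddly step. The subdifferential claim is then cleanest via the general fact that polyhedral convex functions are subdifferentiable on all of their domain, which I would cite from Rockafellar rather than reprove. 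Parts (ii) and (iii) are then essentially bookkeeping with Fenchel's equality and Jensen's inequality as sketched above, and I expect them to be short.
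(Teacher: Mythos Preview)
Your proposal is correct and aligns closely with the paper's proof. For (i), the paper simply cites \cite[Prop.~X.3.4.1]{hiriart2013convexII} for the formula \eqref{eqt:defJstar}, \cite[Thms.~10.2 and 20.5]{rockafellar1970convex} for continuity on the polytopal domain, and \cite[Thm.~23.10]{rockafellar1970convex} for nonemptiness of $\partial J^*$---exactly the polyhedral-function facts you anticipated citing; your more hands-on derivation via conjugate-of-max/convex-envelope-of-min is a valid alternative that reaches the same place. Parts (ii) and (iii) in your sketch are essentially identical to the paper's: (ii) is the Fenchel-equality bookkeeping you wrote out, and for (iii) the paper goes directly to the Jensen argument using (A2), which is precisely the ``cleanest route'' you landed on after (correctly) noting that invoking (ii) would require first exhibiting an $\bx$ with $k\in I_{\bx}$.
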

\begin{proof}
See Appendix \ref{sec:prooflem31} for the proof.
\end{proof}

Having defined the initial condition $J$, the next step is to define a Hamiltonian $H$. To do so, first denote by $\mathcal{A}(\bp)$ the set of minimizers in Eq.~\eqref{eqt:defJstar} evaluated at $\bp\in\dom J^*$, i.e.,
\begin{equation}\label{eqt:defsetA}
\mathcal{A}(\bp) \coloneqq 
\argmin_{\substack{(\alpha_{1},\dots\alpha_{m})\in \unitsim_m\\
\sum_{i=1}^{m}\alpha_{i}\bp_{i}=\bp
}
}\left\{\sum_{i=1}^{m}\alpha_{i}\gamma_{i}\right\}.
\end{equation}
Note that the set $\mathcal{A}(\bp)$ is non-empty for every $\bp\in\dom J^*$ by Lem. \ref{lem:formulaJstar}(i). Now, we define the Hamiltonian function $H\colon\R^n \to \R\cup\{+\infty\}$ by
\begin{equation}\label{eqt:defH}
    H(\bp)\coloneqq \begin{dcases}
    \inf_{\bm{\alpha}\in \mathcal{A}(\bp)} \left\{\sum_{i=1}^m \alpha_i \theta_i\right\},& \text{if }\bp \in \dom J^*,\\
    +\infty, &\text{otherwise}.
    \end{dcases}
\end{equation}
\revision{The function $H$ defined in \eqref{eqt:defH} is a polyhedral function whose properties are stated in the following lemma.}

\begin{lem}\label{lem:Hprop}
Suppose $\{(\bp_i,\theta_i,\gamma_i)\}_{i=1}^{m}\subset \R^n \times \R \times \R$ satisfy assumptions (A1)-(A3). Then the following statements hold.
\begin{itemize}
    \item[(i)] For every $\bp \in \dom J^*$, the set $\mathcal{A}(\bp)$ is compact and Eq.~\eqref{eqt:defH} has at least one minimizer.
    \item[(ii)] The restriction of $H$ to $\dom J^*$ is a bounded and continuous function.
    \item[(iii)] There holds $H(\bp_i) = \theta_i$ for each $i \in\{1,\dots,m\}$.
\end{itemize}
\end{lem}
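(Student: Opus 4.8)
The plan is to treat~(i) by a routine compactness/Weierstrass argument, to observe that the boundedness in~(ii) needs essentially no work, to reduce the continuity in~(ii) to a statement about an auxiliary \emph{polyhedral} convex function, and to obtain~(iii) from a short direct computation in which assumption~(A3) is precisely what is used.

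For~(i): writing $\bm{\alpha}=(\alpha_1,\dots,\alpha_m)$, I would observe that $\mathcal{A}(\bp)$ is the set of minimizers of the continuous linear functional $\bm{\alpha}\mapsto\sum_{i=1}^m\alpha_i\gamma_i$ over $C_{\bp}\coloneqq\unitsim_m\cap\{\bm{\alpha}:\sum_{i=1}^m\alpha_i\bp_i=\bp\}$; since $C_{\bp}$ is an affine slice of the compact simplex it is compact, so the minimum is attained and $\mathcal{A}(\bp)$ is a closed, hence compact, subset of $\unitsim_m$, nonempty by Lem.~\ref{lem:formulaJstar}(i). The continuous functional $\bm{\alpha}\mapsto\sum_{i=1}^m\alpha_i\theta_i$ therefore attains its infimum over $\mathcal{A}(\bp)$, which is exactly the assertion that \eqref{eqt:defH} has a minimizer. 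The boundedness in~(ii) is immediate, since for $\bp\in\dom J^*$ the value $H(\bp)$ is a convex combination of $\theta_1,\dots,\theta_m$, whence $\min_i\theta_i\leqslant H(\bp)\leqslant\max_i\theta_i$.

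The substance of the lemma is the continuity in~(ii). Here I would introduce the parametric value function $\Phi\colon\R^n\times\R\to\R\cup\{+\infty\}$,
\begin{equation*}
\Phi(\bp,r)\coloneqq\inf\Bigl\{\textstyle\sum_{i=1}^m\alpha_i\theta_i:\ \bm{\alpha}\in\unitsim_m,\ \sum_{i=1}^m\alpha_i\bp_i=\bp,\ \sum_{i=1}^m\alpha_i\gamma_i=r\Bigr\},
\end{equation*}
with the convention $\inf\emptyset=+\infty$. A direct reading of this definition shows that $\epi\Phi=\conv(\{(\bp_i,\gamma_i,\theta_i)\}_{i=1}^m)+\bigl(\{\boldsymbol{0}\}\times\{0\}\times[0,+\infty)\bigr)$, the Minkowski sum of a polytope with a ray, hence a polyhedron; thus $\Phi$ is a proper polyhedral convex function whose domain is the polytope $P\coloneqq\conv(\{(\bp_i,\gamma_i)\}_{i=1}^m)$, and since a polyhedral convex function agrees on its domain with a finite maximum of affine functions (see, e.g., \cite[\S19]{rockafellar1970convex}), $\Phi$ is continuous relative to $P$. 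As the minimizers in \eqref{eqt:defJstar} are exactly the $\bm{\alpha}\in\unitsim_m$ with $\sum_{i}\alpha_i\bp_i=\bp$ and $\sum_{i}\alpha_i\gamma_i=J^*(\bp)$, one has $H(\bp)=\Phi(\bp,J^*(\bp))$ for every $\bp\in\dom J^*$, and in particular $(\bp,J^*(\bp))\in P$. Since $J^*$ is continuous on $\dom J^*$ by Lem.~\ref{lem:formulaJstar}(i), the map $\bp\mapsto(\bp,J^*(\bp))$ is continuous from $\dom J^*$ into $P$; composing with $\Phi|_{P}$ yields the continuity of $H$ on $\dom J^*$.

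For~(iii): by Lem.~\ref{lem:formulaJstar}(iii) we have $J^*(\bp_k)=\gamma_k$, hence $H(\bp_k)=\Phi(\bp_k,\gamma_k)\leqslant\theta_k$ upon taking $\bm{\alpha}=\bolde_k$, the $k$\textsuperscript{th} standard basis vector. For the reverse inequality, take any feasible $\bm{\alpha}$ in the definition of $\Phi(\bp_k,\gamma_k)$: either $\alpha_k=1$, so $\bm{\alpha}=\bolde_k$ and the objective equals $\theta_k$, or $\alpha_k<1$, in which case the rescaled vector $\beta_i\coloneqq\alpha_i/(1-\alpha_k)$ for $i\neq k$, $\beta_k\coloneqq0$, satisfies exactly the constraints \eqref{eqt:assumption3} of~(A3) with $j=k$, so that $\sum_{i\neq k}\beta_i\theta_i>\theta_k$ and therefore $\sum_i\alpha_i\theta_i=\alpha_k\theta_k+(1-\alpha_k)\sum_{i\neq k}\beta_i\theta_i>\theta_k$. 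In all cases $\sum_i\alpha_i\theta_i\geqslant\theta_k$, so $\Phi(\bp_k,\gamma_k)=\theta_k$, i.e.\ $H(\bp_k)=\theta_k$. The only genuinely delicate point in this whole argument is the continuity of $H$ \emph{up to the boundary} of $\dom J^*$: a subsequential-limit argument applied to minimizers $\bm{\alpha}^{(k)}\in\mathcal{A}(\bp_k)$ only delivers lower semicontinuity of $H$, and neither $J^*$ nor $H$ need be continuous on the boundary of its domain merely by virtue of being convex. Recognizing that the auxiliary function $\Phi$ is polyhedral, hence automatically continuous on the whole of its polyhedral domain, is exactly what upgrades the easy lower semicontinuity to full continuity; everything else is bookkeeping with the simplex constraints together with the properties of $J$ and $J^*$ already recorded in Lem.~\ref{lem:formulaJstar}.
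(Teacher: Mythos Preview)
Your proposal is correct and follows essentially the same approach as the paper's proof: the paper introduces the same auxiliary function (called $h$ there, your $\Phi$), observes it is convex and continuous on its polyhedral domain $\conv\{(\bp_i,\gamma_i)\}_{i=1}^m$, and writes $H(\bp)=h(\bp,J^*(\bp))$; your treatment of~(iii) via the rescaling $\beta_i=\alpha_i/(1-\alpha_k)$ and assumption~(A3) is identical to the paper's. The only cosmetic difference is that you justify the polyhedrality of $\Phi$ by explicitly computing its epigraph as $\conv\{(\bp_i,\gamma_i,\theta_i)\}+\{0\}\times\{0\}\times[0,+\infty)$, whereas the paper simply invokes ``the same argument as in the proof of Lem.~\ref{lem:formulaJstar}(i)''.
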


\begin{proof}
See Appendix \ref{sec:prooflem32} for the proof.
\end{proof}

\subsection{Main results: First-order Hamilton--Jacobi equations} \label{subsec:main}
Let $f$ be the function represented by the neural network architecture in Fig. \ref{fig:nn_max}, whose mathematical definition is given in Eq.~\eqref{eqt:deff}. In the following theorem, we identify the set of first-order HJ equations whose viscosity solutions correspond to the neural network $f$. Specifically, $f$ solves a first-order HJ equation with Hamiltonian $H$ and initial function $J$ that were defined previously in Eqs.~\eqref{eqt:defH} and \eqref{eqt:defJ}, respectively. Furthermore,  we provide necessary and sufficient conditions for a first-order HJ equation of the form of \eqref{eqn:intro_example} to have for viscosity solution the neural network $f$.

\begin{thm}\label{thm:constructHJ}
Suppose the parameters $\{(\bp_i,\theta_i,\gamma_i)\}_{i=1}^{m}\subset \R^n \times \R \times \R$ satisfy assumptions (A1)-(A3), and let $f$ be the neural network defined by Eq. \eqref{eqt:deff} with these parameters. Let $J$ and $H$ be the functions defined in Eqs. \eqref{eqt:defJ} and \eqref{eqt:defH}, respectively, and let $\tilde{H}\colon\ \R^n\to\R$ be a continuous function. Then the following two statements hold.
\begin{itemize}
    \item[(i)] The neural network $f$ is the unique uniformly continuous viscosity solution to the first-order Hamilton--Jacobi equation 
    \begin{equation} \label{eqt:HJ-H}
    \begin{dcases}
    \frac{\partial f}{\partial t}(\bx,t) + H(\nabla_{\bx} f(\bx,t)) =0, & \text{\rm in }\mathbb{R}^{n}\times(0,+\infty),\\
    f(\bx,0) = J(\bx), & \text{\rm in }\mathbb{R}^{n}.
    \end{dcases}
    \end{equation}
    Moreover, $f$ is jointly convex in ($\bx$,t).
    \item[(ii)] The neural network $f$ is the unique uniformly continuous viscosity solution to the first-order Hamilton--Jacobi equation
\begin{equation} \label{eqt:HJ}
    \begin{dcases}
    \frac{\partial f}{\partial t}(\bx,t) + \tilde{H}(\nabla_{\bx} f(\bx,t)) =0, & \text{\rm in }\mathbb{R}^{n}\times(0,+\infty),\\
    f(\bx,0) = J(\bx), & \text{\rm in }\mathbb{R}^{n},
    \end{dcases}
\end{equation}
if and only if $\tilde{H}(\bp_i) = H(\bp_i)$ for each $i\in \{1,\dots,m\}$ and $\tilde{H}(\bp)\geqslant H(\bp)$ for every $\bp\in \dom J^*$. 
\end{itemize}
\end{thm}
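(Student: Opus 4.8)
The plan is to identify the neural network $f$ of \eqref{eqt:deff} with the Hopf--Lax/Lax--Oleinik representation formula, and then read off the claimed characterizations. The key algebraic observation is that, using Lem.~\ref{lem:formulaJstar}(iii) ($J^*(\bp_i)=\gamma_i$) and Lem.~\ref{lem:Hprop}(iii) ($H(\bp_i)=\theta_i$), the linear pieces appearing in \eqref{eqt:deff} can be rewritten as $\langle \bp_i,\bx\rangle - t\theta_i - \gamma_i = \langle \bp_i,\bx\rangle - tH(\bp_i) - J^*(\bp_i)$. So I would first show the identity
\begin{equation*}
f(\bx,t) = \max_{i\in\{1,\dots,m\}}\{\langle \bp_i,\bx\rangle - tH(\bp_i) - J^*(\bp_i)\} = \sup_{\bp\in\dom J^*}\{\langle \bp,\bx\rangle - tH(\bp) - J^*(\bp)\}.
\end{equation*}
The first equality is by construction. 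For the second, ``$\leqslant$'' is immediate since the $\bp_i$ lie in $\dom J^*$; for ``$\geqslant$'', given any $\bp\in\dom J^*$ I would take a minimizing $\bm\alpha\in\mathcal A(\bp)$ (nonempty and attained by Lem.~\ref{lem:formulaJstar}(i), Lem.~\ref{lem:Hprop}(i)), so that $\bp=\sum_i\alpha_i\bp_i$, $J^*(\bp)=\sum_i\alpha_i\gamma_i$, and $H(\bp)=\sum_i\alpha_i\theta_i$, whence $\langle\bp,\bx\rangle - tH(\bp) - J^*(\bp) = \sum_i\alpha_i(\langle\bp_i,\bx\rangle - t\theta_i-\gamma_i)\leqslant f(\bx,t)$ by convexity of the max. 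This is the step I expect to require the most care, because it is exactly where assumption (A3) is needed: (A3) is what guarantees that $H$ as defined in \eqref{eqt:defH} has the value $\theta_j$ at $\bp_j$ (Lem.~\ref{lem:Hprop}(iii)) rather than something smaller coming from a degenerate convex combination, and hence that the ``max over $m$ terms'' genuinely equals the ``sup over $\dom J^*$''.

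Next, I would invoke the classical Hopf formula: since $J$ is convex, $J^{**}=J$, and $f(\bx,t)=\sup_{\bp}\{\langle\bp,\bx\rangle - (J^* + tH)(\bp)\} = (J^* + tH)^*(\bx)$ is the Hopf-formula solution of \eqref{eqt:HJ-H}. I would cite the standard result (e.g. Bardi--Capuzzo-Dolcetta, or Hopf's original paper, already in the bibliography) that for $J\in\Gamma_0(\R^n)$ this formula gives the unique uniformly continuous viscosity solution of the HJ PDE with Hamiltonian $H$ and initial data $J$, together with the fact that $f$, being a finite supremum of affine functions of $(\bx,t)$, is finite everywhere, Lipschitz (hence uniformly continuous), and jointly convex in $(\bx,t)$. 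This establishes part (i). One technical point to address is that the cited viscosity-solution results are usually stated for coercive or continuous Hamiltonians on all of $\R^n$, whereas here $H$ may take the value $+\infty$ off $\dom J^*$; I would handle this by noting that $\nabla_\bx f(\bx,t)$ always lies in $\conv(\{\bp_i\})\subseteq\dom J^*$ (the gradient of the max is a convex combination of the active $\bp_i$'s), so only the values of $H$ on $\dom J^*$, where it is bounded and continuous by Lem.~\ref{lem:Hprop}(ii), are ever seen by the equation.

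For part (ii), I would argue as follows. If $\tilde H(\bp_i)=H(\bp_i)$ for all $i$ and $\tilde H\geqslant H$ on $\dom J^*$, then for any $\bx,t$,
\begin{equation*}
\sup_{\bp\in\dom J^*}\{\langle\bp,\bx\rangle - t\tilde H(\bp) - J^*(\bp)\} \leqslant \sup_{\bp\in\dom J^*}\{\langle\bp,\bx\rangle - tH(\bp) - J^*(\bp)\} = f(\bx,t),
\end{equation*}
while evaluating at $\bp=\bp_i$ and taking the max over $i$ gives the reverse inequality using $\tilde H(\bp_i)=H(\bp_i)=\theta_i$; hence the Hopf formula for $\tilde H$ also equals $f$, so $f$ is the unique uniformly continuous viscosity solution of \eqref{eqt:HJ}. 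Conversely, suppose $f$ is the unique uniformly continuous viscosity solution of \eqref{eqt:HJ}. Evaluating the PDE along a point where $f$ is differentiable with $\nabla_\bx f = \bp_i$: the affine piece $(\bx,t)\mapsto\langle\bp_i,\bx\rangle - t\theta_i-\gamma_i$ is the unique active piece on an open set (by (A1) and (A3), after the reduction to irredundant neurons the $i$th piece is strictly dominant somewhere for $t>0$), so there $\partial f/\partial t = -\theta_i$ and $\nabla_\bx f=\bp_i$, forcing $\tilde H(\bp_i)=\theta_i=H(\bp_i)$. For $\tilde H(\bp)\geqslant H(\bp)$ on $\dom J^*$: since the Hopf formula with $\tilde H$ must reproduce $f$, and $f(\bx,t)\geqslant\langle\bp,\bx\rangle - t\tilde H(\bp) - J^*(\bp)$ for every fixed $\bp$ by the supremum characterization, taking $\bx\in\partial J^*(\bp)$ (nonempty for $\bp\in\dom J^*$ by Lem.~\ref{lem:formulaJstar}(i)) so that $\langle\bp,\bx\rangle - J^*(\bp)=J^{**}(\bx)=J(\bx)$, and comparing the growth in $t$, one extracts $\tilde H(\bp)\geqslant$ the slope that $f$ actually exhibits, namely $H(\bp)$; more precisely $f(\bx,t)\leqslant J(\bx) - tH(\bp) + o(1)$ is false unless $\tilde H(\bp)\geqslant H(\bp)$, so dividing by $t$ and letting $t\to0^+$ (or $t\to+\infty$, exploiting linearity in $t$) yields the inequality. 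The main obstacle in part (ii) is making this last converse direction clean: one must be careful that $f$ might not be differentiable at the relevant points, so I would phrase the extraction of $\tilde H(\bp_i)$ and the inequality $\tilde H\geqslant H$ in terms of the subdifferential of the convex function $f$ and the defining supremum, rather than pointwise PDE evaluation, to avoid any gap.
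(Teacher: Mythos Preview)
Your treatment of part (i) and of the sufficiency direction in (ii) is essentially the paper's: you prove the Hopf-formula identity $f(\bx,t)=\sup_{\bp\in\dom J^*}\{\langle\bp,\bx\rangle-t\tilde H(\bp)-J^*(\bp)\}$ exactly as in the paper's Lemma~\ref{lem:equivSHandf} and then cite \cite{bardi1984hopf}. The paper handles the extended-valued $H$ by extending it to a bounded continuous function on $\R^n$ (via Lem.~\ref{lem:Hprop}(ii) and Tietze) rather than arguing that gradients stay in $\dom J^*$, but that is cosmetic.

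In the necessity direction of (ii) there are two gaps.

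\emph{First}, the claim that ``the $i$th piece is strictly dominant somewhere for $t>0$'' is exactly what the paper isolates as Lemma~\ref{lem:existdiffpt}, and it is not automatic from (A1)--(A3). When $\bp_k$ lies in the convex hull of the other $\bp_i$'s, the piece $\langle\bp_k,\cdot\rangle-\gamma_k$ may never be the \emph{strict} maximizer at $t=0$; one has to use (A3) together with a carefully constructed perturbation direction $\bv_0\in\partial(\cobar h)(\bp_k)$ to produce $(\bx,t)$ with $t>0$ where $\bp_k$ is uniquely active. Your parenthetical does not supply this.

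\emph{Second}, and more seriously, the argument for $\tilde H(\bp)\geqslant H(\bp)$ does not work as written. Fix $\bp\in\dom J^*$ and $\bx\in\partial J^*(\bp)$. For small $t>0$, the active indices in $f(\bx,t)$ lie in $I_{\bx}$, and the slope you read off is
\[
\frac{\partial}{\partial t}\Big|_{t=0^+} f(\bx,t)=-\min_{i\in I_{\bx}}\theta_i,
\]
so your inequality yields only $\tilde H(\bp)\geqslant \min_{i\in I_{\bx}}\theta_i$. This is in general \emph{strictly weaker} than $\tilde H(\bp)\geqslant H(\bp)$, since $H(\bp)$ is a constrained convex combination of $\{\theta_i:i\in I_\bx\}$ and hence $H(\bp)\geqslant\min_{i\in I_\bx}\theta_i$. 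Concretely, take $n=1$, $m=2$, $(p_1,p_2)=(0,1)$, $\gamma_1=\gamma_2=0$, $(\theta_1,\theta_2)=(0,2)$; then $H(1/2)=1$, while at $\bx=0\in\partial J^*(1/2)$ one has $I_0=\{1,2\}$ and the slope is $-\min\{0,2\}=0$, so your argument only gives $\tilde H(1/2)\geqslant 0$, not $\geqslant 1$. Letting $t\to+\infty$ does not help either, since the asymptotic slope of $f(\bx,\cdot)$ is $-\min_{i}\theta_i$, independent of $\bp$.

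The paper's route here is genuinely different: it passes to the \emph{space-time} Legendre transform. Writing $F(\bp,E^-)=J^*(\bp)$ when $E^-+H(\bp)\leqslant 0$ (else $+\infty$) and $\tilde F$ analogously with $\tilde H$, one has $f=F^*=\tilde F^*$, hence $f^*=\cobar F=\cobar\tilde F$. Lemma~\ref{lem:formulacoF} computes $\co F$ explicitly as a linear program over $\Lambda_m$. If $\tilde H(\bp)<H(\bp)$ at some $\bp\in\ri\dom J^*$, one shows $\co F(\bp,E_1^-)\leqslant J^*(\bp)$ for some $E_1^-\in(-H(\bp),-\tilde H(\bp))$, and the minimizer of that linear program is then forced to lie in $\mathcal A(\bp)$ while having $\sum_i c_i\theta_i<H(\bp)$, contradicting the definition of $H$. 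The extra variable $E^-$ (dual to $t$) is precisely what lets one disentangle the slope information from the specific choice of $\bx$; a purely spatial argument at a fixed $\bx\in\partial J^*(\bp)$ cannot see the constraint $\sum_i\alpha_i\bp_i=\bp$ that defines $H(\bp)$.
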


\begin{proof}
\revision{See Appendix \ref{sec:pf_thmconstructHJ} for the proof.}
\end{proof}

\begin{remark}
This theorem identifies the set of HJ equations with initial data $J$ whose solution is given by the neural network $f$. To each such HJ equation, there corresponds a continuous Hamiltonian $\tilde{H}$ satisfying $\tilde{H}(\bp_i) = H(\bp_i)$ for every $i=\{1,\dots,m\}$ and $\tilde{H}(\bp)\geqslant H(\bp)$ for every $\bp\in \dom J^*$. The smallest possible Hamiltonian satisfying these constraints is the function $H$ defined in \eqref{eqt:defH}, and its corresponding HJ equation is given by \eqref{eqt:HJ-H}.
\end{remark}

\def \Jtrue {J^\text{true}}
\def \Htrue {H^\text{true}}

\begin{example}\label{eg:egHJ1}
In this example, we consider the HJ PDE with initial data $\Jtrue(\bx) = \|\bx\|_1$ and the Hamiltonian $\Htrue(\bp) = -\frac{\|\bp\|_2^2}{2}$ for all $\bx, \bp\in \R^n$. The viscosity solution to this HJ PDE is given by
\[
S(\bx,t) = \|\bx\|_1 + \frac{nt}{2} = \max_{i\in \{1,\dots, m\}} \{\langle \bp_i, \bx\rangle - t\theta_i -\gamma_i\}, \text{ for every } \bx\in\R^n \text{ and } t\geqslant 0,
\]
where $m=2^n$, each entry of $\bp_i$ takes value in $\{\pm 1\}$, and $\theta_i = -\frac{n}{2}$, $\gamma_i = 0$ for every $i\in\{1,\dots, m\}$. In other words, the solution $S$ can be represented using the proposed neural network with parameters $\{(\bp_i, -\frac{n}{2}, 0)\}_{i=1}^m$. We can compute the functions $J$ and $H$ using definitions in Eqs. \eqref{eqt:defJ} and \eqref{eqt:defH} and then obtain
\begin{equation*}
    \begin{split}
        &J(\bx) = \|\bx\|_1 = \Jtrue(\bx), \text{ for every }\bx\in\R^n;\\
        &H(\bp) = \begin{cases}
        -\frac{n}{2}, & \bp\in [-1,1]^n;\\
        +\infty, &\text{otherwise}.
        \end{cases}
    \end{split}
\end{equation*}
Thm. \ref{thm:constructHJ} stipulates that $S$ solves the HJ PDE \eqref{eqt:HJ} if and only if $\tilde{H}(\bp_i) = -\frac{n}{2}$ for every $i\in\{1,\dots, m\}$ and $\tilde{H}(\bp)\geq -\frac{n}{2}$ for every $\bp\in [-1,1]^n\setminus \{\bp_i\}_{i=1}^m$. The Hamiltonian $\Htrue$ is one candidate satisfying these constraints.
\end{example}

\begin{example}
In this example, we consider the case when $\Jtrue(\bx) = \|\bx\|_\infty$ and $\Htrue(\bp) = -\frac{\|\bp\|_2^2}{2}$ for every $\bx, \bp\in \R^n$. Denote by $\bolde_i$ the $i^\text{th}$ standard unit vector in $\R^n$. Let $m=2n$, $\{\bp_i\}_{i=1}^m = \{\pm \bolde_i\}_{i=1}^n$, $\theta_i = -\frac{n}{2}$, and $\gamma_i = 0$ for every $i\in\{1,\dots, m\}$. The viscosity solution $S$ is given by
\[
S(\bx,t) = \|\bx\|_\infty + \frac{nt}{2} = \max_{i\in \{1,\dots, m\}} \{\langle \bp_i, \bx\rangle - t\theta_i - \gamma_i \}, \text{ for every } \bx\in\R^n \text{ and } t\geqslant 0.
\]
Hence, $S$ can be represented using the proposed neural network with parameters $\{(\bp_i, -\frac{n}{2}, 0)\}_{i=1}^m$. 
Similarly as in the first example, we compute $J$ and $H$ and obtain the following results
\begin{equation*}
    \begin{split}
        &J(\bx) = \|\bx\|_\infty, \text{ for every } \bx \in \R^n;\\
        &H(\bp) = \begin{cases}
        -\frac{n}{2}, & \bp\in B_n;\\
        +\infty, &\text{otherwise},
        \end{cases}
    \end{split}
\end{equation*}
where $B_n$ denotes the unit ball with respect to the $l^1$ norm in $\R^n$, i.e., $B_n = \conv \{\pm \bolde_i:\ i\in \{1,\dots, n\}\}$. By Thm. \ref{thm:constructHJ}, $S$ is a viscosity solution to the HJ PDE \eqref{eqt:HJ} if and only if $\tilde{H}(\bp_i) = -\frac{n}{2}$ for every $i\in\{1,\dots, m\}$ and $\tilde{H}(\bp)\geq -\frac{n}{2}$ for every $\bp\in B_n\setminus \{\bp_i\}_{i=1}^m$. The Hamiltonian $\Htrue$ is one candidate satisfying these constraints.
\end{example}

\begin{example} \label{eg:egHJ3}
\revision{
In this example, we consider the HJ PDE with Hamiltonian $\Htrue(\bp) = \|\bp\|_1$ and initial data $\Jtrue(\bx) = \max\left\{\|\bx\|_\infty, \frac{1}{\sqrt{2}}(|x_1|+|x_2|)\right\}$, for all $\bp\in\R^n$ and $\bx = (x_1,x_2,\dots,x_n)\in\R^n$. The corresponding neural network has $m = 2n+5$ neurons, where the parameters are given by 
\begin{equation*}
\begin{split}
    &\{(\bp_i, \theta_i, \gamma_i)\}_{i=1}^{2n} = \{(\bolde_i, 1, 0)\}_{i=1}^n \cup \{(-\bolde_i, 1, 0)\}_{i=1}^n, \\
    &(\bp_{2n+1}, \theta_{2n+1}, \gamma_{2n+1}) = (\mathbf{0}, 0, 0), \\
    &\{(\bp_i, \theta_i, \gamma_i)\}_{i=2n+2}^{2n+5} = \left\{\frac{1}{\sqrt{2}} (\alpha \bolde_1 + \beta \bolde_2, 2, 0)\colon \alpha, \beta\in \{\pm 1\}\right\},
\end{split}
\end{equation*}
where $\bolde_i$ is the $i^\text{th}$ standard unit vector in $\R^n$ and $\mathbf{0}$ denotes the zero vector in $\R^n$. The functions $J$ and $H$ defined by \eqref{eqt:defJ} and \eqref{eqt:defH} coincide with the underlying true initial data $\Jtrue$ and Hamiltonian $\Htrue$. Therefore, by Thm. \ref{thm:constructHJ}, the proposed neural network represents the viscosity solution to the HJ PDE. In other words, given the true parameters $\{(\bp_i, \theta_i, \gamma_i)\}_{i=1}^m$, the proposed neural network solves this HJ PDE without the curse of dimensionality. We illustrate the solution with dimension $n=16$ in Fig. \ref{fig:eg3}, which shows several slices of the solution evaluated at $\bx = (x_1, x_2, 0, \dots, 0)\in\R^{16}$ and $t = 0, 1, 2, 3$ in figures 2(A), 2(B), 2(C), 2(D), respectively. In each figure, the $x$ and $y$ axes correspond to the first two components $x_1$ and $x_2$ in $\bx$, while the color represents the function value $S(\bx, t)$.
}
\begin{figure}[htbp]
\centering
\subfloat[]{\label{subfig:eg3_1}\includegraphics[width = 0.5\textwidth]{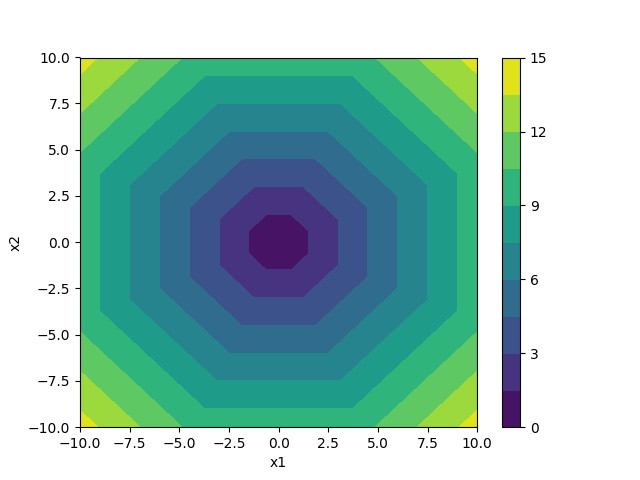}}
\subfloat[]{\label{subfig:eg3_2}\includegraphics[width = 0.5\textwidth]{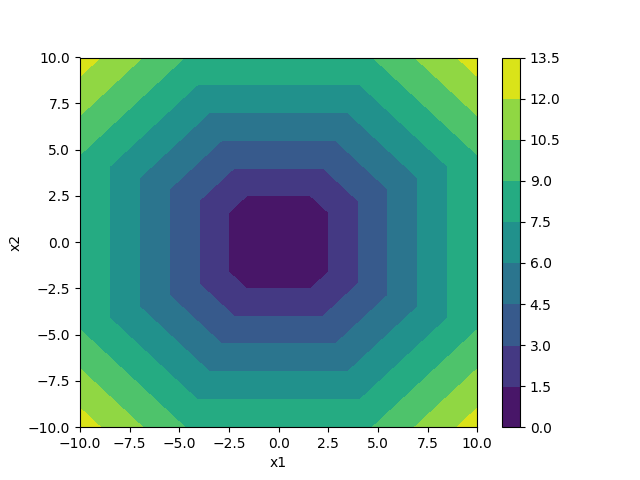}}\\
\subfloat[]{\label{subfig:eg3_3}\includegraphics[width = 0.5\textwidth]{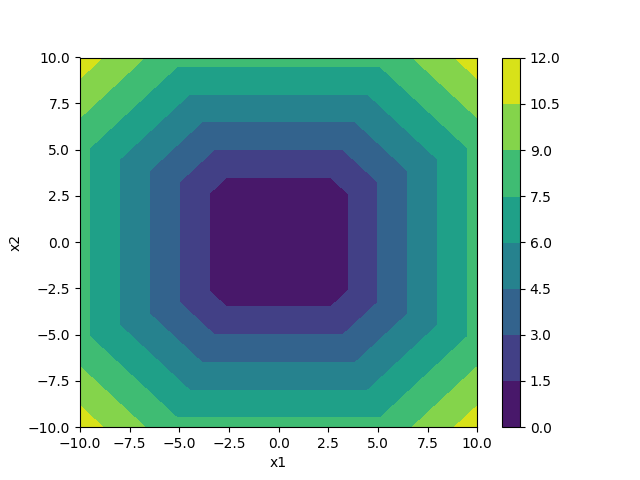}}
\subfloat[]{\label{subfig:eg3_4}\includegraphics[width = 0.5\textwidth]{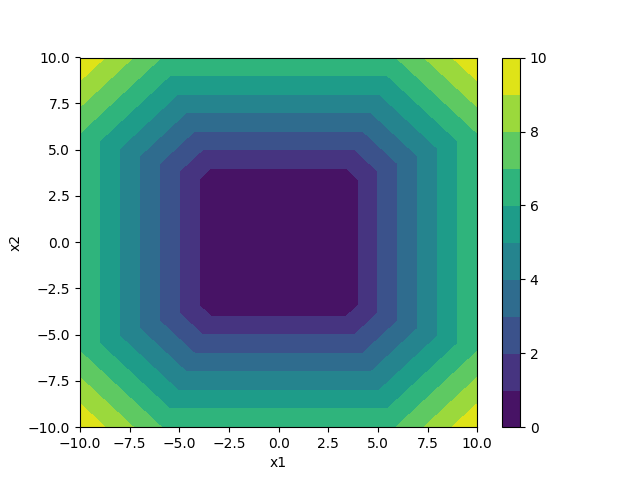}}
\caption{
\revision{
The solution $S\colon \R^{16}\times [0,+\infty)\to \R$ to the HJ PDE in Example \ref{eg:egHJ3} is solved using the proposed neural network. Several slices of the solution $S$ evaluated at $\bx=(x_1, x_2, 0,\dots, 0)$ and $t=0,1,2,3$ are shown in figures 2(A), 2(B), 2(C), 2(D), respectively. In each figure, the $x$ and $y$ axes correspond to the first two components $x_1$ and $x_2$ in the variable $\bx$, while the color represents the function value $S(\bx, t)$.
} 
}
\label{fig:eg3}
\end{figure}
\end{example}

\begin{remark}
\revision{Let $\epsilon > 0$ and consider the neural network $f_{\epsilon} \colon \R^n\times[0,+\infty)\to \R$ defined by
\begin{equation} \label{eq:log-exponential_network}
f_{\epsilon}(\bx,t)\coloneqq\epsilon\log\left(\sum_{i=1}^{m}e^{\left(\left\langle \bp_{i},\bx\right\rangle -t\theta_{i}-\gamma_{i}\right)/\epsilon}\right)
\end{equation}
and illustrated in Fig. \ref{fig: log-exponential_network_figure}. This neural network substitutes the non-smooth maximum activation function in the neural network $f$ defined by Eq.~\eqref{eqt:deff} (and depicted in Fig.~\ref{fig:nn_max}) with a smooth log-exponential activation function. When the parameter $\theta_i = -\frac{1}{2}\left\Vert \bp_{i}\right\Vert _{2}^{2}$, then the neural network $f_\epsilon$ is the unique, jointly convex and smooth solution to the following viscous HJ PDE
\begin{equation}
\begin{dcases}\label{eq:hj_visc_pde_sum}
\frac{\partial f_{\epsilon}(\bx,t)}{\partial t}-\frac{1}{2}\left\Vert \nabla_{\bx}f_{\epsilon}(\bx,t)\right\Vert _{2}^{2}=\frac{\epsilon}{2}\Laplacian f_{\epsilon}(\bx,t) & \text{\rm in }\mathbb{R}^{n}\times(0,+\infty),\\
f_{\epsilon}(\bx,0)=\epsilon\log\left(\sum_{i=1}^{m}e^{\left(\left\langle \bp_{i},\bx\right\rangle -\gamma_{i}\right)/\epsilon}\right) & \text{\rm in }\mathbb{R}^{n}.
\end{dcases}
\end{equation}
This result relies on the Cole--Hopf transformation (\cite{evans1998partial}, Sect. 4.4.1); see Appendix~\ref{sec:pf_thmvisc} for the proof. While this neural network architecture represents, under certain conditions, the solution to the viscous HJ PDE~\eqref{eq:hj_visc_pde_sum}, we note that the particular form of the convex initial data in the HJ PDE~\eqref{eq:hj_visc_pde_sum}, which effectively corresponds to a soft Legendre transform in that $\lim_{\substack{\epsilon\to 0\\ \epsilon>0}} \epsilon\log\left(\sum_{i=1}^{m}e^{\left(\left\langle \bp_{i},\bx\right\rangle -\gamma_{i}\right)/\epsilon}\right) = \max_{i\in\{1,\dots,m\}}\{\left\langle \boldsymbol{p}_{i},\boldsymbol{x}\right\rangle -\gamma_{i}\}$, severely restricts the practicality of this result.}
\end{remark}

\begin{figure}[ht]
\includegraphics[width=\textwidth]{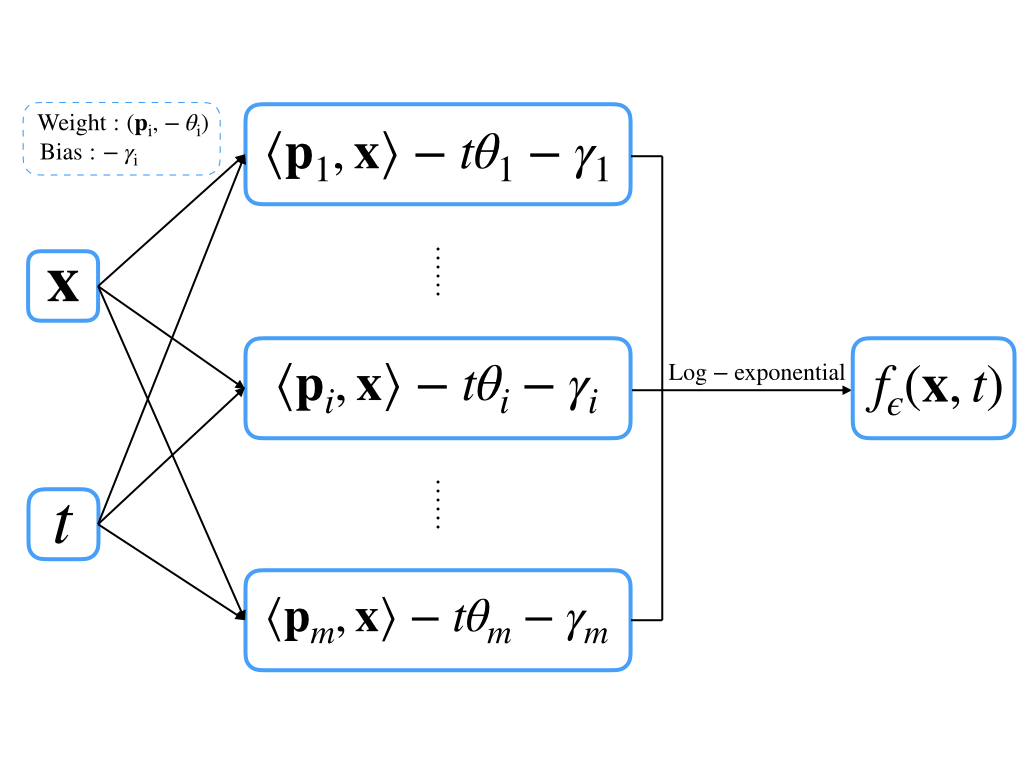}
\caption{Illustration of the structure of the neural network \eqref{eq:log-exponential_network} that represents the solution to a subclass of second-order HJ equations when $\theta_i = -\frac{1}{2}\|\bp_i\|_2^2$ for $i\in \{1,\dots, m\}$. \label{fig: log-exponential_network_figure}}
\end{figure}

\subsection{First-order one-dimensional conservation laws}
\label{sec:conservation}
\begin{figure}[ht]
\includegraphics[width=\textwidth]{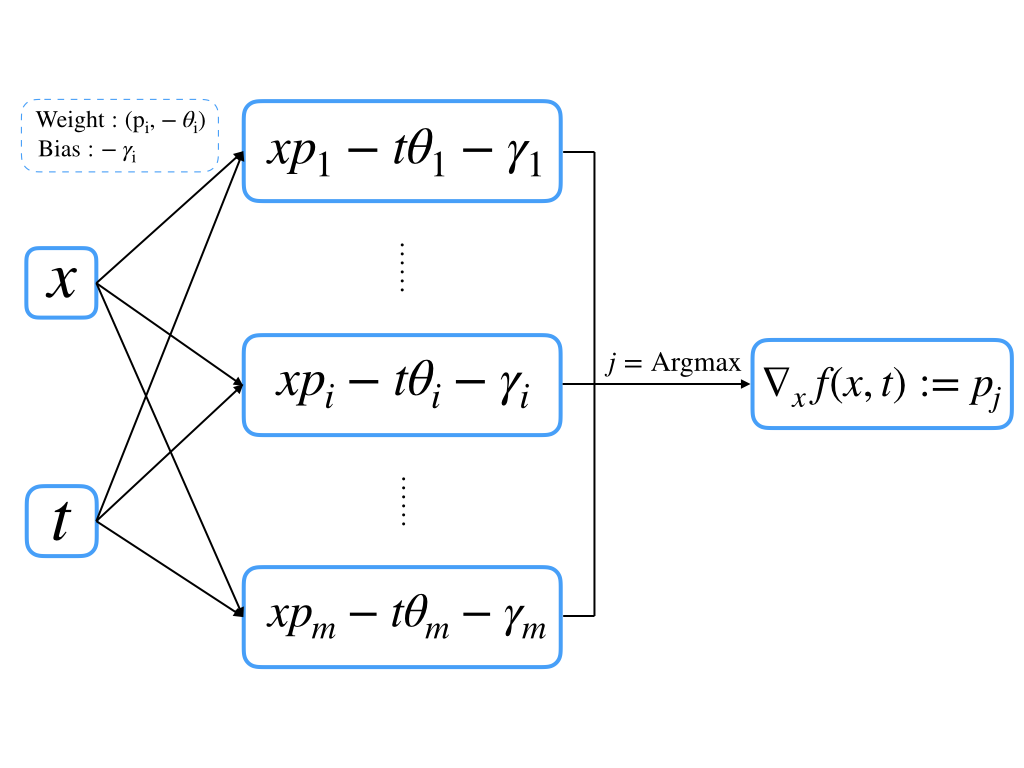}
\caption{Illustration of the structure of the neural network \eqref{eqt:nn_conservation} that can represent the entropy solution to one-dimensional conservation laws.}
\label{fig:nn_argmax}
\end{figure}

It is well-known that one-dimensional conservation laws are related to HJ equations (see, e.g., \cite{Aaibid2006Direct,brenier1986approximate,Brenier1988Discrete, Caselles1992Scalar, Corrias1995Numerical, Jin1998Numerical, Karlsen2002Note, Krukov1967Generalized, Lions1995Convergence}, and also \cite{Dafermos2016Hyperbolic} for a comprehensive introduction to conservation laws and entropy solutions). 
Formally, by taking spatial gradient of the HJ equation \eqref{eqn:intro_example} and identifying the gradient $\nabla_{x}f \equiv u$, we obtain the conservation law
\begin{equation} \label{eqt:conservation}
    \begin{dcases}
    \frac{\partial u}{\partial t}(x,t) + \nabla_{x} H(u(x,t)) =0, & \text{\rm in }\mathbb{R}\times(0,+\infty),\\
    u(x,0) = u_0(x) \coloneqq \nabla J(x), & \text{\rm in }\mathbb{R},
    \end{dcases}
\end{equation}
where the flux function corresponds to the Hamiltonian $H$ in the HJ equation. Here, we assume that the initial data $J$ is convex and globally Lipschitz continuous, and the symbols $\nabla$ and $\nabla_{x}$ in this section correspond to derivatives in the sense of distribution if the classical derivatives do not exist.

In this section, we show that the conservation law derived from the HJ equation \eqref{eqn:intro_example} can be represented by a neural network architecture. Specifically, the corresponding entropy solution $u(x,t) \equiv \nabla_{x}f(x,t)$ to the one-dimensional conservation law \eqref{eqt:conservation} can be represented using a neural network architecture with an argmax based activation function, i.e.,
\begin{equation}\label{eqt:nn_conservation}
    \nabla_{x} f(x,t) = p_j, \text{ where } j \in \argmax_{i\in\{1,\dots,m\}}\{\left\langle p_{i},x\right\rangle -t\theta_{i}-\gamma_{i}\}.
\end{equation}
The structure of this network is shown in Fig. \ref{fig:nn_argmax}. When more than one maximizer exist in the optimization problem above, one can choose any maximizer $j$ and define the value to be $p_j$. We now prove that the function $\nabla_{x}f$ given by the neural network \eqref{eqt:nn_conservation} is indeed the entropy solution to the one-dimensional conservation law \eqref{eqt:conservation} with flux function $H$ and initial data $\nabla J$, where $H$ and $J$ are defined by Eqs. \eqref{eqt:defH} and \eqref{eqt:defJ}, respectively.

\def \um {u^-}
\def \up {u^+}
\def \unon {u_0}
\def \Ncal {\mathcal{N}}
\def \cik {c_i^k}
\def \cikp {c_i^{k+1}}
\def \conek {c_1^k}
\def \conekp {c_1^{k+1}}
\def \cmk {c_m^k}
\def \cmkp {c_m^{k+1}}
\def \ckk {c_k^k}
\def \ckpkp {c_{k+1}^{k+1}}
\def \ak {b_k}
\def \akp {b_{k+1}}
\def \ind {\omega}

\begin{prop} \label{thm:conservation}
Consider the one-dimensional case, i.e., $ n=1$. Suppose the parameters $\{(p_i,\theta_i,\gamma_i)\}_{i=1}^{m}\subset \R \times \R \times \R$ satisfy assumptions (A1)-(A3), and let $u\coloneqq \nabla_{x} f$ be the neural network defined in Eq. \eqref{eqt:nn_conservation} with these parameters. Let $J$ and $H$ be the functions defined in Eqs. \eqref{eqt:defJ} and \eqref{eqt:defH}, respectively, and let $\tilde{H}\colon \R \to \R$ be a locally Lipschitz continuous function. Then the following two statements hold.
\begin{itemize}
    \item[(i)] The neural network $u$ is the entropy solution to the conservation law
    \begin{equation} \label{eqt:conservation_1D}
        \begin{dcases}
        \frac{\partial u}{\partial t}(x,t) + \nabla_{x} H(u(x,t)) =0, & \text{\rm in }\mathbb{R}\times(0,+\infty),\\
        u(x,0) = \nabla J(x), &\text{\rm in }\mathbb{R}.
        \end{dcases}
    \end{equation}
    \item[(ii)] 
    The neural network $u$ is the entropy solution to the conservation law
    \begin{equation}\label{eqt:conservation_Htilde}
    \begin{dcases}
    \frac{\partial u}{\partial t}(x,t) + \nabla_{x} \tilde{H}(u(x,t)) = 0, & \text{\rm in }\mathbb{R}\times(0,+\infty),\\
    u(x,0) = \nabla J(x), &\text{\rm in }\mathbb{R},
    \end{dcases}
\end{equation}
if and only if there exists a constant $C\in\R$ such that $\tilde{H}(p_i)= H(p_i) +C$ for every $i\in\{1,\dots,m\}$ and $\tilde{H}(p) \geqslant H(p)+C$ for any $p\in \conv{\{p_i\}_{i=1}^{m}}$.
\end{itemize}
\end{prop}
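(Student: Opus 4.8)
The plan is to deduce everything from Thm.~\ref{thm:constructHJ} via the classical one-dimensional correspondence recalled at the start of this section: for $W$ convex and globally Lipschitz and $G$ locally Lipschitz, $w$ is the unique uniformly continuous viscosity solution of $w_t+G(w_x)=0$, $w(\cdot,0)=W$, if and only if $w_x$ is the unique entropy solution of $v_t+\partial_x(G(v))=0$, $v(\cdot,0)=\nabla W$ (see the references cited above). Before applying it I would record a few elementary facts: $f(x,t)=\max_i\{p_ix-t\theta_i-\gamma_i\}$ is jointly Lipschitz, so the function $u$ of Eq.~\eqref{eqt:nn_conservation} agrees a.e.\ with $\partial_x f$ (they can differ only at the finitely many breakpoints of $f(\cdot,t)$, a null set) and is piecewise constant with values in $\{p_1,\dots,p_m\}$; $J$ is convex, polyhedral and globally Lipschitz with $\dom J^*=\conv(\{p_i\}_{i=1}^{m})$ by Lem.~\ref{lem:formulaJstar}(i); and $H$ is polyhedral, hence Lipschitz and finite on $\dom J^*$, with $H(p_i)=\theta_i$ by Lem.~\ref{lem:Hprop}. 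Since scalar entropy solutions obey a maximum principle, every $u$ considered below stays valued in $[\min_i p_i,\max_i p_i]\subset\dom J^*$ (so composing with $H$, or with a locally Lipschitz $\tilde H$, causes no difficulty; the convexity of $J$ is what keeps the gradients inside $\dom J^*$). Part (i) is then immediate: by Thm.~\ref{thm:constructHJ}(i), $f$ is the unique uniformly continuous viscosity solution of $f_t+H(f_x)=0$, $f(\cdot,0)=J$, and the correspondence with $G=H$ shows that $u=\partial_x f$ is the entropy solution of Eq.~\eqref{eqt:conservation_1D}.

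For the ``if'' part of (ii), assume $\tilde H(p_i)=H(p_i)+C$ for all $i$ and $\tilde H\ge H+C$ on $\conv(\{p_i\}_{i=1}^{m})=\dom J^*$, and put $\hat H\coloneqq\tilde H-C$. Then $\hat H$ is continuous (indeed locally Lipschitz), $\hat H(p_i)=H(p_i)$, and $\hat H\ge H$ on $\dom J^*$, so by Thm.~\ref{thm:constructHJ}(ii) the neural network $f$ is the unique uniformly continuous viscosity solution of $f_t+\hat H(f_x)=0$, $f(\cdot,0)=J$. The correspondence (with $G=\hat H$) then gives that $u=\partial_x f$ is the entropy solution of $u_t+\partial_x(\hat H(u))=0$, $u(\cdot,0)=\nabla J$; and since $\hat H$ and $\tilde H$ differ by the constant $C$, both the equation and the Kruzhkov entropy inequalities for flux $\hat H$ coincide with those for $\tilde H$, so $u$ solves Eq.~\eqref{eqt:conservation_Htilde}.

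For the ``only if'' part, suppose $u=\partial_x f$ is the entropy solution of Eq.~\eqref{eqt:conservation_Htilde}. By part (i) the same $u$ solves Eq.~\eqref{eqt:conservation_1D}, so subtracting the two conservation laws gives $\partial_x(H(u)-\tilde H(u))=0$ in $\mathcal D'(\R\times(0,+\infty))$, whence $H(u(x,t))-\tilde H(u(x,t))=\psi(t)$ a.e.\ for some $\psi\in L^\infty_{\mathrm{loc}}$. For fixed $t$, $x\mapsto u(x,t)$ attains each value $p_i$ with $i$ active in $f(\cdot,t)$ on a nondegenerate interval, so $\psi(t)=H(p_i)-\tilde H(p_i)=\theta_i-\tilde H(p_i)=:c_i$ for every such $i$. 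The crux is that all the $c_i$ coincide: by (A3), after the reduction following Eq.~\eqref{eqt:assumption3}, every neuron is the strict maximizer at some point of $\R\times(0,+\infty)$, so the activity cells $R_i\coloneqq\{(x,t):i\in\argmax_j\{p_jx-t\theta_j-\gamma_j\}\}\cap(\R\times(0,+\infty))$ are two-dimensional closed convex cells of a face-to-face polyhedral subdivision of the connected set $\R\times(0,+\infty)$, which therefore has connected nerve (intersection graph); and whenever $R_i$ meets $R_j$ one checks $c_i=\psi(t)=c_j$ (two cells sharing a wall are both active along it at a.e.\ time, and cells meeting only at a vertex are joined by a chain of walls). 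Hence $c_i\equiv-C$ for one constant $C$, i.e.\ $\tilde H(p_i)=H(p_i)+C$, and $\psi\equiv-C$. Using the correspondence once more to produce the unique uniformly continuous viscosity solution $v$ of $v_t+\tilde H(v_x)=0$, $v(\cdot,0)=J$ and noting $v_x=u=\partial_x f$, we get $v=f+h(t)$ with $h$ Lipschitz and $h(0)=0$; differentiating and using $f_t=-H(f_x)$ together with $\tilde H(f_x)=H(f_x)+C$ forces $h'\equiv-C$, so $v=f-Ct$, i.e.\ $f$ is the unique uniformly continuous viscosity solution of $f_t+(\tilde H-C)(f_x)=0$, $f(\cdot,0)=J$. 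The ``only if'' clause of Thm.~\ref{thm:constructHJ}(ii), applied with Hamiltonian $\tilde H-C$, then gives $(\tilde H-C)(p)\ge H(p)$ for every $p\in\dom J^*$, i.e.\ $\tilde H(p)\ge H(p)+C$ on $\conv(\{p_i\}_{i=1}^{m})$, as desired.

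The main obstacle I anticipate is the ``only if'' direction, and within it the claim that the flux discrepancy $\psi(t)=H(u)-\tilde H(u)$ is constant in $t$. This is precisely where assumption (A3) is essential: it forces the activity cells $R_i$ to be full-dimensional, so that, being a finite closed cover of the connected half-plane, they have connected nerve, which in turn makes all the constants $c_i$ agree. A secondary, technical point is to invoke the one-dimensional Hamilton--Jacobi / conservation-law correspondence in both directions at the appropriate level of generality (locally Lipschitz flux, convex globally Lipschitz initial data), taking care that $H$ equals $+\infty$ outside $\dom J^*$---harmless here because of the maximum principle, which confines all the solutions to the set where $H$ is finite and Lipschitz.
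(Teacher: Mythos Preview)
Your route is correct and genuinely different from the paper's. The paper never invokes the Hamilton--Jacobi/conservation-law correspondence as a black box; instead it works directly at the level of piecewise-constant entropy solutions and verifies Dafermos' criterion (Rankine--Hugoniot plus the Ole\u{\i}nik entropy condition) by hand. Three auxiliary lemmas (Lem.~\ref{lem:53}, \ref{lem:52}, \ref{lem:51}) pin down the piecewise-affine structure of $H$ and the positions of the shocks, and part~(ii) necessity is obtained by reading off $\tilde H(p_{k+1})-\tilde H(p_k)=H(p_{k+1})-H(p_k)$ from the Rankine--Hugoniot relation across the shock separating $\{u=p_k\}$ and $\{u=p_{k+1}\}$ (after sorting the $p_i$), and the inequality from the Ole\u{\i}nik condition across the same shocks. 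Your approach, by contrast, reduces everything to Thm.~\ref{thm:constructHJ} and the correspondence, which is conceptually cleaner and avoids re-deriving entropy conditions, at the price of importing that correspondence at the right level of generality and of a topological connectivity argument for the activity cells.

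Two points to firm up. First, the statement that every $R_i$ is full-dimensional in the open half-plane is exactly Lem.~\ref{lem:existdiffpt}; you should cite it rather than reprove it from (A3). Second, your nerve argument is right but the actual inference you use is that the \emph{edge-adjacency} graph of the cells is connected (vertex-only intersections are handled by the local cyclic structure around each interior vertex); it is worth saying this explicitly and noting that edges are never horizontal (since $p_i\neq p_j$), so each shared edge has a nondegenerate $t$-projection and hence yields $c_i=\psi(t)=c_j$ on a set of positive measure. With those two details made precise, your proof goes through; it trades the paper's explicit shock analysis for a clean reduction to Thm.~\ref{thm:constructHJ}.
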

\begin{proof}
See Appendix \ref{sec:pf_thmconservation} for the proof.
\end{proof}

\begin{example}
Here, we give one example related to Example \ref{eg:egHJ1}.
Consider $\Jtrue(x) = |x|$ and $\Htrue(p) = -\frac{p^2}{2}$ for every $x,p\in \R$. The entropy solution $u$ to the corresponding one dimensional conservation law is given by 
\[
u(x,t) = \begin{cases}
1 & \text{if } x > 0,\\
-1 & \text{if } x < 0.
\end{cases}
\]
This solution $u$ can be represented using the neural network in Fig. \ref{fig:nn_argmax} with $m=2$, $p_1 = 1$, $p_2 = -1$, $\theta_1 = \theta_2 = -\frac{1}{2}$ and $\gamma_1 = \gamma_2 = 0$. To be specific, we have
\begin{equation*}
u(x)= p_j, \text{ where } j \in \argmax_{i\in \{1,\dots,m\}} \left\{xp_i - t\theta_i - \gamma_i\right\}.
\end{equation*}
The initial data $J$ and  Hamiltonian $H$ defined in Eqs. \eqref{eqt:defJ} and \eqref{eqt:defH} are given by
\begin{equation*}
    \begin{split}
        &J(x) = |x|, \text{ for every } x\in\R;\\
        &H(p) = \begin{cases}
        -\frac{1}{2}, &p\in [-1,1],\\
        +\infty, &\text{otherwise}.
        \end{cases}
    \end{split}
\end{equation*}
By Prop. \ref{thm:conservation}, $u$ solves the one dimensional conservation law \eqref{eqt:conservation_Htilde} if and only if there exists some constant $C\in\R$ such that $\tilde{H}(\pm 1) = -\frac{1}{2} + C$ and $\tilde{H}(p) \geqslant -\frac{1}{2} + C$ for every $p\in (-1,1)$. Note that $\Htrue$ is one candidate satisfying these constraints.
\end{example}

\section{Numerical experiments}
\label{sec:numerical}

\subsection{First-order Hamilton\textendash Jacobi equations}

\revision{In this subsection, we present several numerical experiments to test the effectiveness of the Adam optimizer using our proposed architecture (depicted in Fig.~\ref{fig:nn_max}) for solving some inverse problems.} We focus on the following inverse problem: We are given data samples from a function $S\colon \R^n\times [0,+\infty) \to \R$ that is the viscosity solution to an HJ equation \eqref{eqn:intro_example} with unknown \revision{convex} initial data $J$ and Hamiltonian $H$\revision{, which only depends on $\nabla_{\bx}S(\bx,t)$}.  Our aim is to recover the \revision{convex} initial data $J$. We propose to learn the neural network using machine learning techniques to recover the \revision{convex} initial data $J$. We shall see that this approach also provides partial information on the Hamiltonian $H$.

Specifically, given data samples $\{(\bx_{j}, t_{j}, S(\bx_{j}, t_{j}))\}_{j=1}^{N}$, where $\{(\bx_{j}, t_{j})\}_{j=1}^N \subset \R^n\times [0,+\infty)$, we train the neural network $f$ with structure in Fig. \ref{fig:nn_max} using the mean square loss function defined by

\begin{equation*}
    l(\{(\bp_i, \theta_i, \gamma_i)\}_{i=1}^m) = \frac{1}{N}\sum_{j=1}^N |f(\bx_{j}, t_{j}; \{(\bp_i, \theta_i, \gamma_i)\}_{i=1}^m) - S(\bx_{j}, t_{j})|^2.
\end{equation*}
The training problem is formulated as
\begin{equation}\label{eqt:num_trainingprob}
    \argmin_{\{(\bp_i, \theta_i, \gamma_i)\}_{i=1}^m \subset \R^n\times \R\times \R} l(\{(\bp_i, \theta_i, \gamma_i)\}_{i=1}^m).
\end{equation}
After training, we approximate the initial condition in the HJ equation, denoted by $\tilde{J}$, by evaluating the trained neural network at $t=0$. That is, we approximate the initial condition by 
\begin{equation}\label{eqt:num_defJtilde}
    \tilde{J}\coloneqq f(\cdot, 0).
\end{equation}
In addition, we obtain partial information of the Hamiltonian $H$ using the parameters in the trained neural network via the following procedure. We first detect the effective neurons of the network, which we define to be the affine functions $\{\left\langle \bp_{i},\bx\right\rangle -t\theta_{i}-\gamma_{i}\}$ that contribute to the pointwise maximum in the neural network $f$ (see Eq. \eqref{eqt:deff}). We then denote by $L$ the set of indices that correspond to the parameters of the effective neurons, i.e.,
\begin{equation*}
    L\coloneqq \bigcup_{\bx \in \R^n,\, t\geq 0} \argmax_{i\in \{1,\dots,m\}} \{\left\langle \bp_{i},\bx\right\rangle -t\theta_{i}-\gamma_{i}\},
\end{equation*}
and we finally use each effective parameter $(\bp_l, \theta_l)$ for $l\in L$ to approximate the point $(\bp_l, H(\bp_l))$ on the graph of the Hamiltonian. In practice, we approximate the set $L$ using a large number of points $(\bx,t)$ sampled in the domain $\R^n \times [0,+\infty)$.

\def \pitrue {\bp_i^{true}}
\def \thetaitrue {\theta_i^{true}}
\def \gammaitrue {\gamma_i^{true}}
\subsubsection{Randomly generalized piecewise affine $H$ and $J$} \label{sec:sanity-HJ}

\begin{table}[tbp]
\centering
\caption{Relative mean square errors of the parameters in the neural network $f$ with $2$ neurons in different cases and different dimensions averaged over $100$ repeated experiments.}
\label{tab:sanity-width2}
 \begin{tabular}{c |c ||c c c c } 
 \hline
 \noalign{\smallskip}
 \multicolumn{2}{c||}{\# Case} & Case 1&	Case 2&	Case 3 & Case 4\\
 \noalign{\smallskip}
 \hline
 \noalign{\smallskip}
 \multirow{5}{6.5em}{Averaged Relative Errors of $\{p_i\}$} 
&	2D	&	4.10E-03	&	2.10E-03	&	3.84E-03	&	2.82E-03	\\
&	4D	&	1.41E-09	&	1.20E-09	&	1.38E-09	&	1.29E-09	\\
&	8D	&	1.14E-09	&	1.03E-09	&	1.09E-09	&	1.20E-09	\\
&	16D	&	1.14E-09	&	6.68E-03	&	1.23E-09	&	7.74E-03	\\
&	32D	&	1.49E-09	&	3.73E-01	&	1.46E-03	&	4.00E-01	\\
\noalign{\smallskip}
\hline
\noalign{\smallskip}
\multirow{5}{6.5em}{Averaged Relative Errors of $\{\theta_i\}$} 
&	2D	&	4.82E-02	&	7.31E-02	&	1.17E-01	&	1.79E-01	\\
&	4D	&	3.47E-10	&	2.82E-10	&	1.15E-09	&	1.15E-09	\\
&	8D	&	1.47E-10	&	1.08E-10	&	2.10E-10	&	2.25E-10	\\
&	16D	&	5.44E-11	&	1.69E-03	&	4.75E-11	&	4.12E-03	\\
&	32D	&	3.61E-11	&	3.27E-01	&	6.42E-03	&	2.39E-01	\\
\noalign{\smallskip}
 \hline
\noalign{\smallskip}
\multirow{5}{6.5em}{Averaged Relative Errors of $\{\gamma_i\}$}
&	2D	&	1.35E-02	&	1.01E-01	&	1.33E-02	&	9.24E-02	\\
&	4D	&	3.71E-10	&	1.24E-09	&	3.67E-10	&	1.10E-09	\\
&	8D	&	2.91E-10	&	1.74E-10	&	2.82E-10	&	2.01E-10	\\
&	16D	&	2.80E-10	&	2.08E-04	&	3.10E-10	&	3.20E-04	\\
&	32D	&	3.56E-10	&	1.88E-02	&	1.56E-01	&	3.62E-02	\\
\noalign{\smallskip}
\hline
 \end{tabular}
\end{table}

\begin{table}[tbp]
\centering
 \caption{Relative mean square errors of the parameters in the neural network $f$ with $4$ neurons in different cases and different dimensions averaged over $100$ repeated experiments.}
 \label{tab:sanity-width4}
 \begin{tabular}{c |c ||c c c c} 
 \hline
 \noalign{\smallskip}
 \multicolumn{2}{c||}{\# Case} & Case 1&	Case 2& Case 3 & Case 4\\
 \noalign{\smallskip}
 \hline
 \noalign{\smallskip}
 \multirow{5}{6.5em}{Averaged Relative Errors of $\{p_i\}$} 
&	2D	&	3.12E-01	&	2.21E-01	&	2.85E-01	&	2.14E-01	\\
&	4D	&	7.82E-02	&	6.12E-02	&	7.92E-02	&	4.30E-02	\\
&	8D	&	2.62E-02	&	4.31E-03	&	4.02E-02	&	7.82E-03	\\
&	16D	&	2.88E-02	&	3.64E-02	&	4.35E-02	&	1.73E-02	\\
&	32D	&	1.42E-02	&	3.72E-01	&	1.42E-01	&	5.04E-01	\\
\noalign{\smallskip}
 \hline
\noalign{\smallskip}
\multirow{5}{6.5em}{Averaged Relative Errors of $\{\theta_i\}$} 
&	2D	&	2.59E-01	&	3.68E-01	&	4.82E-01	&	1.34E+00	\\
&	4D	&	6.07E-02	&	8.37E-02	&	9.47E-02	&	1.23E-01	\\
&	8D	&	1.04E-02	&	8.48E-03	&	1.41E-02	&	1.31E-02	\\
&	16D	&	2.66E-03	&	2.53E-02	&	7.80E-03	&	1.90E-02	\\
&	32D	&	8.09E-04	&	4.41E-01	&	1.81E-02	&	3.66E-01	\\
\noalign{\smallskip}
 \hline
\noalign{\smallskip}
 \multirow{5}{6.5em}{Averaged Relative Errors of $\{\gamma_i\}$}
&	2D	&	1.01E-02	&	3.19E-01	&	1.51E-02	&	2.65E-01	\\
&	4D	&	6.72E-03	&	1.79E-02	&	1.03E-02	&	1.30E-02	\\
&	8D	&	3.22E-03	&	2.34E-03	&	3.93E-03	&	2.65E-03	\\
&	16D	&	9.48E-03	&	3.70E-03	&	1.92E-02	&	1.94E-03	\\
&	32D	&	1.33E-02	&	5.35E-02	&	4.73E-01	&	1.17E-01	\\
\noalign{\smallskip}
\hline
 \end{tabular}
\end{table}

In this subsection, we randomly select $m$ parameters $\pitrue$ in $[-1, 1)^n$ for $i\in \{1,\dots, m\}$, and define $\thetaitrue$ and $\gammaitrue$ as follows
\begin{itemize}
    \item[Case 1.] $\thetaitrue = -\|\pitrue\|_2$ and $\gammaitrue = 0$, for $i\in\{1,\dots, m\}$.
    \item[Case 2.] $\thetaitrue = -\|\pitrue\|_2$ and $\gammaitrue = \frac{1}{2}\|\pitrue\|_2^2$, for $i\in\{1,\dots, m\}$.
    \item[Case 3.] $\thetaitrue = -\frac{1}{2}\|\pitrue\|_2^2$ and $\gammaitrue = 0$, for $i\in\{1,\dots, m\}$.
    \item[Case 4.] $\thetaitrue = -\frac{1}{2}\|\pitrue\|_2^2$ and $\gammaitrue = \frac{1}{2}\|\pitrue\|_2^2$, for $i\in\{1,\dots, m\}$.
\end{itemize}
Define the function $S$ as 
\begin{equation*}
    S(\bx,t)\coloneqq \max_{i \in\{1,\dots, m\}} \{\langle \pitrue, \bx\rangle - t\thetaitrue - \gammaitrue\}.
\end{equation*}
By Thm. \ref{thm:constructHJ}, this function $S$ is a viscosity solution to the HJ equations whose Hamiltonian and initial function are the piecewise affine functions defined in Eqs. \eqref{eqt:defH} and \eqref{eqt:defJ}, respectively. 
In other words, $S$ solves the HJ equation with initial data $J$ satisfying
\begin{equation}\label{eqt:num_eqtJ}
\begin{split}
    &J(\bx)\coloneqq
    \max_{i\in\{1,\dots,m\}} \left\langle \pitrue,\boldsymbol{x}\right\rangle, \quad \text{ for Case $1$ and $3$};\\
    &J(\bx)\coloneqq
    \max_{i\in\{1,\dots,m\}}\left\{\left\langle \pitrue,\boldsymbol{x}\right\rangle -\frac{1}{2}\|\pitrue\|^2\right\}, \quad \text{ for Case $2$ and $4$},
\end{split}
\end{equation}
and Hamiltonian $H$ satisfying
\begin{equation*}
\begin{split}
    &H(\bp)\coloneqq \begin{dcases}
    -\max_{\bm{\alpha}\in \mathcal{A}(\bp)} \left\{\sum_{i=1}^m \alpha_i \|\pitrue\|_2\right\},& \text{if }\bp \in \dom J^*,\\
    +\infty, &\text{otherwise},
    \end{dcases}
    \quad\quad \quad \text{ for Case $1$ and $2$;}\\
    &H(\bp)\coloneqq \begin{dcases}
    -\frac{1}{2}\max_{\bm{\alpha}\in \mathcal{A}(\bp)} \left\{\sum_{i=1}^m \alpha_i \|\pitrue\|_2^2\right\},& \text{if }\bp \in \dom J^*,\\
    +\infty, &\text{otherwise},
    \end{dcases}
    \quad \quad \ \text{ for Case $3$ and $4$},
\end{split}
\end{equation*}
where $\mathcal{A}(\bp)$ is the set of maximizers of the corresponding maximization problem in Eq. \eqref{eqt:num_eqtJ}.
Specifically, if we construct a neural network $f$ as shown in Fig. \ref{fig:nn_max} with the underlying parameters $\{(\pitrue, \thetaitrue, \gammaitrue)\}_{i=1}^m$, then the function given by the neural network is exactly the same as the function $S$. In other words, $\{(\pitrue, \thetaitrue, \gammaitrue)\}_{i=1}^m$ is a global minimizer for the training problem \eqref{eqt:num_trainingprob} with the global minimal loss value equal to zero.

Now, we train the neural network $f$ with training data $\{(\bx_{j}, t_{j}, S(\bx_{j}, t_{j}))\}_{j=1}^{N}$, where the points $\{(\bx_{j}, t_{j})\}_{j=1}^N$ are randomly sampled in $\R^n\times [0,+\infty)$ with respect to the standard normal distribution for each $j\in\{1,\dots, N\}$ (we take the absolute value for $t$ to make sure it is non-negative). Here and after, the number of training data points is $N=$ 20,000. We run 60,000 descent steps using the Adam optimizer to train the neural network $f$. The parameters for the Adam optimizer are chosen to be $\beta_1 = 0.5$, $\beta_2 = 0.9$, the learning rate is $10^{-4}$ and the batch size is $500$.

To measure the performance of the training process, we compute the relative mean square errors of the sorted parameters in the trained neural network, denoted by $\{(\bp_i, \theta_i, \gamma_i)\}_{i=1}^m$,  and the sorted underlying true parameters $\{(\pitrue, \thetaitrue, \gammaitrue)\}_{i=1}^m$. To be specific, the errors are computed as follows
\begin{equation*}
\begin{split}
    &\text{relative mean square error of }\{\bp_i\}= \frac{\sum_{i=1}^m \|\bp_i - \pitrue \|_2^2}{\sum_{i=1}^m\|\pitrue\|_2^2},\\
    &\text{relative mean square error of }\{\theta_i\}= \frac{\sum_{i=1}^m |\theta_i - \thetaitrue|^2}{\sum_{i=1}^m |\thetaitrue|^2},\\
    &\text{relative mean square error of }\{\gamma_i\}= \frac{\sum_{i=1}^m |\gamma_i - \gammaitrue|^2}{\sum_{i=1}^m |\gammaitrue|^2}.
    \end{split}
\end{equation*}
For the cases when the denominator $\sum_{i=1}^m |\gammaitrue|^2$ is zero, such as Case $1$ and Case $3$, we measure the absolute mean square error $\frac{1}{m}\sum_{i=1}^m |\gamma_i - \gammaitrue |^2$ instead. 

We test Cases 1--4 on the neural networks with $2$ and $4$ neurons, i.e., we set $m=2,4$ and repeat the experiments $100$ times. We then compute the relative mean square errors in each experiments and take the average. The averaged relative mean square errors are shown in Tabs. \ref{tab:sanity-width2} and \ref{tab:sanity-width4}, respectively.  From the error tables, we observe that the training process performs pretty well and gives errors below $10^{-8}$ in some cases when $m = 2$. However, for the case when $m=4$, we do not obtain the global minimizers and the error is above $10^{-3}$. Therefore, there is no guarantee for the performance of the Adam optimizer in this training problem and it may be related to the complexity of the solution $S$ to the underlying HJ equation.

\subsubsection{Quadratic Hamiltonians} \label{sec:num_1stHJ_quadraticH}
In this subsection, we consider two inverse problems of first-order HJ equations whose Hamiltonians and initial data are defined as follows:
\begin{enumerate}
    \item[1.] $H(\bp) = - \frac{1}{2}\|\bp\|_2^2$ and $J(\bx) = \|\bx\|_1$ for $\bp, \bx\in\R^n$.
    \item[2.] $H(\bp) = \frac{1}{2}\|\bp\|_2^2$ and $J(\bx) = \|\bx\|_1$ for $\bp, \bx\in\R^n$.
\end{enumerate}
The solution to each of the two corresponding HJ equations can be represented using the Hopf formula \cite{Hopf1965} and reads
\begin{enumerate}
    \item[1.] $S(\bx, t) = \|\bx\|_1 + \frac{nt}{2}$ for $\bx\in \R^n$ and $t\geqslant 0$.
    \item[2.] $S(\bx, t) = \sum_{i: |x_i|\geqslant t} \left(|x_i| - \frac{t}{2}\right) + \sum_{i: |x_i|< t}\frac{x_i^2}{2t}$, where $\bx = (x_1,\dots, x_n)\in \R^n$ and $t\geqslant 0$.
\end{enumerate}

We train the neural network $f$ using the same procedure as in the previous subsection and obtain the function $\tilde{J}$ (see Eq. \eqref{eqt:num_defJtilde}) and the parameters $\{(\bp_l, \theta_l)\}_{l\in L}$ associated to the effective neurons.
We compute the relative mean square error of $\tilde{J}$ and $\{(\bp_l, \theta_l)\}_{l\in L}$ as follows
\begin{equation*}
    \begin{split}
        &\text{relative error of }\tilde{J} \coloneqq \frac{\sum_{j=1}^{N^{test}} |\tilde{J}(\bx_i^{test}) - J(\bx_i^{test})|^2 }{\sum_{j=1}^{N^{test}} |J(\bx_i^{test})|^2},\\
        &\text{relative error of }\{(\bp_l,\theta_l)\}_l \coloneqq \frac{\sum_{l\in L} |\theta_l - H(\bp_l)|^2 }{\sum_{l\in L} |H(\bp_l)|^2},\\
    \end{split}
\end{equation*}
where $\{\bx_i^{test}\}$ are randomly sampled with respect to the standard normal distribution in $\R^n$ and there are in total $N^{test} = $ 2,000 testing data points.
We repeat the experiments $100$ times.  The corresponding averaged errors in the two examples are listed in Tabs. \ref{tab:inverse-eg1} and \ref{tab:inverse-eg2}, respectively. 

\begin{table}[tbp]
\centering
 \caption{Relative mean square errors of $\tilde{J}$ and $\{(\bp_l,\theta_l)\}$ for the inverse problems of the first-order HJ equations in different dimensions with $J=\|\cdot\|_1$ and $H= -\frac{1}{2}\|\cdot\|_2^2$, averaged over $100$ repeated experiments.}
 \label{tab:inverse-eg1}
 \begin{tabular}{c |c ||c c c c c } 
 \hline
 \noalign{\smallskip}
 \multicolumn{2}{c||}{\# Neurons} & 64	&	128	&	256	&	512	&
 1024\\
 \noalign{\smallskip}
 \hline
 \noalign{\smallskip}
 \multirow{5}{6.5em}{Averaged Relative Errors of $\tilde{J}$} 
&	1D	&	2.29E-07	&	2.20E-07	&	2.12E-07	&	2.14E-07	&	1.82E-07	\\
&	2D	&	1.49E-06	&	1.27E-06	&	1.16E-06	&	1.01E-06	&	9.25E-07	\\
&	4D	&	6.27E-04	&	1.81E-04	&	5.93E-05	&	1.69E-06	&	3.44E-07	\\
&	8D	&	1.27E-02	&	1.10E-02	&	1.03E-02	&	9.92E-03	&	9.73E-03	\\
&	16D	&	5.69E-02	&	5.83E-02	&	5.96E-02	&	5.99E-02	&	6.01E-02	\\
\noalign{\smallskip}
\hline
\noalign{\smallskip}
\multirow{5}{6.5em}{Averaged Relative Errors of $\{(\bp_l,\theta_l)\}$} 
&	1D	&	2.58E-01	&	1.29E-01	&	7.05E-02	&	3.56E-02	&	1.72E-02	\\
&	2D	&	4.77E-02	&	3.28E-02	&	2.03E-02	&	1.03E-02	&	6.53E-03	\\
&	4D	&	9.36E-03	&	4.09E-03	&	1.58E-03	&	5.31E-04	&	1.73E-04	\\
&	8D	&	3.75E-02	&	3.39E-02	&	3.25E-02	&	2.78E-02	&	2.60E-02	\\
&	16D	&	5.30E-01	&	5.40E-01	&	5.43E-01	&	5.43E-01	&	5.42E-01	\\
\noalign{\smallskip}
 \hline
\noalign{\smallskip}
 \multirow{5}{6.5em}{Averaged Number of Effective Neurons}
&	1D	&	4.45	&	4.37	&	4.18	&	3.92	&	3.55	\\
&	2D	&	8.84	&	8.59	&	7.87	&	7.1	&	6.3	\\
&	4D	&	20.04	&	20.62	&	19.52	&	18.3	&	17.06	\\
&	8D	&	36.97	&	43.91	&	47.84	&	49.19	&	50.03	\\
&	16D	&	48.2	&	59.53	&	64.85	&	65.79	&	64.84	\\
\noalign{\smallskip}
\hline
 \end{tabular}
\end{table}

\begin{table}[tbp]
\centering
 \caption{Relative mean square errors of $\tilde{J}$ and $\{(\bp_l,\theta_l)\}$ for the inverse problems of the first-order HJ equations in different dimensions with $J=\|\cdot\|_1$ and $H= \|\cdot\|_2^2/2$, averaged over $100$ repeated experiments.}
 \label{tab:inverse-eg2}
 \begin{tabular}{c |c ||c c c c c } 
 \hline
 \noalign{\smallskip}
 \multicolumn{2}{c||}{\# Neurons}&64	&	128	&	256	&	512	&
 1024\\
 \noalign{\smallskip}
 \hline
 \noalign{\smallskip}
 \multirow{5}{6.5em}{Averaged Relative Errors of $\tilde{J}$} 
&	1D	&	5.23E-08	&	2.45E-08	&	1.96E-08	&	1.77E-08	&	1.77E-08	\\
&	2D	&	1.75E-05	&	1.67E-05	&	1.77E-05	&	1.85E-05	&	1.91E-05	\\
&	4D	&	5.82E-04	&	4.94E-04	&	5.28E-04	&	5.76E-04	&	6.16E-04	\\
&	8D	&	1.54E-02	&	1.40E-02	&	1.35E-02	&	1.33E-02	&	1.32E-02	\\
&	16D	&	4.19E-02	&	4.33E-02	&	4.43E-02	&	4.46E-02	&	4.49E-02	\\
\noalign{\smallskip}
\hline
\noalign{\smallskip}
\multirow{5}{6.5em}{Averaged Relative Errors of $\{(\bp_l,\theta_l)\}$} 
&	1D	&	3.25E-02	&	1.93E-02	&	1.24E-02	&	5.62E-03	&	2.92E-03	\\
&	2D	&	8.30E-03	&	7.08E-03	&	5.78E-03	&	4.25E-03	&	3.47E-03	\\
&	4D	&	2.41E-02	&	2.41E-02	&	2.51E-02	&	2.65E-02	&	2.82E-02	\\
&	8D	&	7.33E-02	&	7.32E-02	&	7.25E-02	&	7.15E-02	&	7.08E-02	\\
&	16D	&	3.85E-01	&	3.90E-01	&	3.92E-01	&	3.92E-01	&	3.91E-01	\\
\noalign{\smallskip}
 \hline
\noalign{\smallskip}
 \multirow{5}{6.5em}{Averaged Number of Effective Neurons}
&	1D	&	20.26	&	26.94	&	32.26	&	36.02	&	38.61	\\
&	2D	&	32.74	&	48.05	&	65.7	&	84.87	&	99.83	\\
&	4D	&	46.69	&	72.3	&	103.71	&	147.41	&	198.27	\\
&	8D	&	55.55	&	82.04	&	95.46	&	90.82	&	82.5	\\
&	16D	&	61.51	&	99.63	&	119.95	&	118.89	&	109.1	\\
\noalign{\smallskip}
\hline
 \end{tabular}
\end{table}

In the first example, we have $H(\bp) = - \frac{1}{2}\|\bp\|_2^2$ and $J(\bx) = \|\bx\|_1$. According to Thm. \ref{thm:constructHJ}, the solution $S$ can be represented without error by the neural network in Fig. \ref{fig:nn_max} with parameters 
\begin{equation}\label{eqt:num_1stHJ_eg1_param}
    \left\{(\bp,\theta, \gamma)\in \R^n\times \R\times \R: \ \bp(i) \in \{\pm 1\}, \text{ for }i \in \{1,\dots, n\},\ \theta = \frac{n}{2},\ \gamma = 0\right\},
\end{equation}
where $\bp(i)$ denotes the  $i^{\text{th}}$ entry of the vector $\bp$.
In other words, the global minimal loss value in the training problem is theoretically guaranteed to be zero. 
From the numerical errors in Tab. \ref{tab:inverse-eg1}, 
we observe that in low dimension such as 1D and 2D, the errors of the initial function are small. 
However, in most cases, the errors of the parameters are pretty large. 
In the case of $n$ dimension, the viscosity solution can be represented using the $2^n$ parameters in Eq. \eqref{eqt:num_1stHJ_eg1_param}. 
However, the number of effective neurons are larger than $2^n$ in all cases, which also implies that the Adam optimizer does not find the global minimizers in this example.

In the second example, the solution $S$ cannot be represented using our proposed neural network without error. 
Hence the results describes the approximation of the solution $S$ by the neural network.
From Tab. \ref{tab:inverse-eg2}, we  observe 
that the errors become larger when the dimension increases. For this example, the number of effective neurons should be $m$ where $m$ is the number of neurons used in the architecture. Tab. \ref{tab:inverse-eg2} shows that the average number of effective neurons is below this optimal number. Therefore, this implies that the Adam optimizer does not find the global minimizers in this example either.

In conclusion, these numerical experiments suggest that recovering initial data from data samples using our proposed neural network architecture with the Adam optimizer is unsatisfactory for solving these inverse problems. In particular, Adam optimizer is not always able to find a global minimizer when the solution can be represented without error using our network architecture.
\subsection{One-dimensional conservation laws}

\begin{figure}[htbp]
\centering
\subfloat[]{\label{subfig:conservation_eg1}\includegraphics[width = 0.5\textwidth]{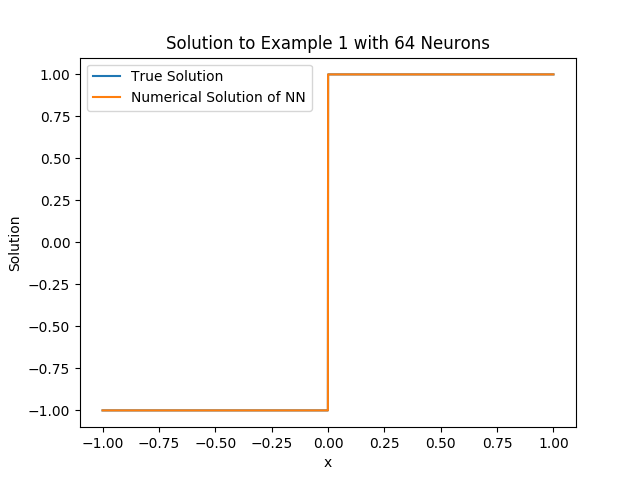}}
\caption{Plot of the function represented by the neural network $\nabla_x f$ at time $t=1$ with 64 neurons whose parameters are defined using $H$ and $J^*$ in example 1. 
The function given by the neural network is plotted in orange and the true solution is plotted in blue.}
\label{fig:conservation-representability1}
\end{figure}

\begin{figure}[htbp]
\centering
\subfloat[]{\label{subfig:conservation_eg2_32}\includegraphics[width = 0.5\textwidth]{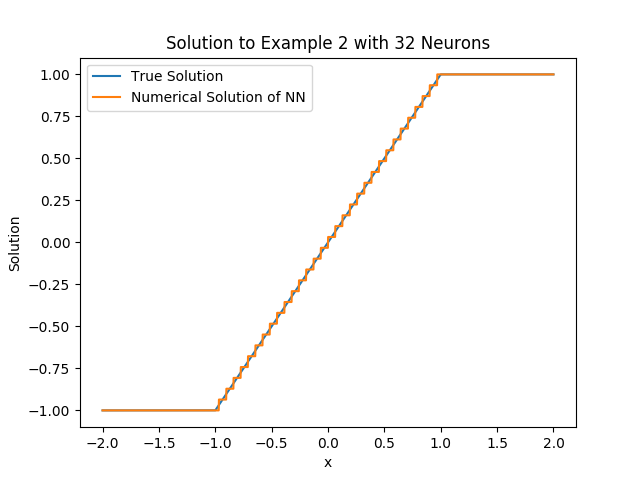}}
\subfloat[]{\label{subfig:conservation_eg2_128}\includegraphics[width = 0.5\textwidth]{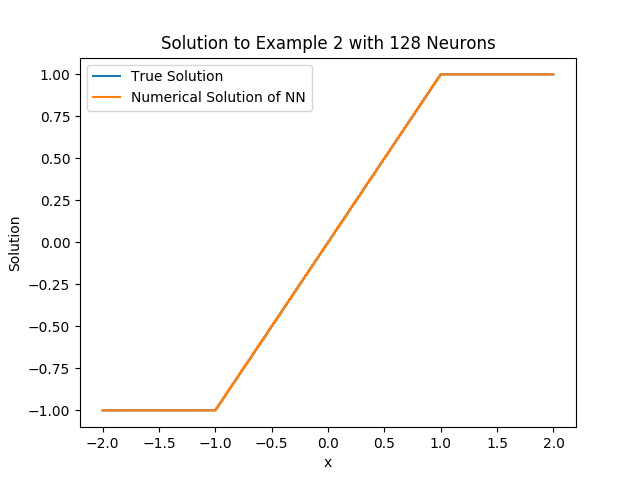}}
\caption{Plot of the function represented by the neural network $\nabla_x f$ at time $t=1$ with 32 and 128 neurons whose parameters are defined using $H$ and $J^*$ in example 2. 
The function given by the neural network is plotted in orange and the true solution is plotted in blue. The neural network with 32 neurons is shown on the left, while the neural network with 128 neurons is shown on the right.}
\label{fig:conservation-representability2}
\end{figure}

In this part, we show the representability of the neural network $\nabla_{x} f$ given in Fig. \ref{fig:nn_argmax} and Eq. \eqref{eqt:nn_conservation}. Since the number of neurons is finite, the function $\nabla_{x} f$ only takes values in the finite set $\{p_i\}_{i=1}^m$. In other words, it can represent the entropy solution $u$ to the PDE \eqref{eqt:conservation} without error only if $u$ takes values in a finite set.

Here, we consider the following two examples 
\begin{itemize}
    \item[1.] $H(p) = -\frac{1}{2}p^2$ and $J(x) = |x|$ for $p,x\in\R$. The initial condition $u_0$ is then given by
    \begin{equation*}
        u_0(x) = \begin{cases}
        1, & x > 0,\\
        -1, & x < 0.
        \end{cases}
    \end{equation*}
    \item[2.] $H(p) = \frac{1}{2}p^2$ and $J(x) = |x|$ for $p,x\in\R$. Hence, the initial function $u_0$ is the same as in example 1.
\end{itemize}
In the first example, the entropy solution $u$ only takes values in the finite set $\{\pm 1\}$, and it can be represented by the neural network $\nabla_x f$ without error by Prop. \ref{thm:conservation}. However, in the second example, the solution $u$ takes values in the infinite set $[-1,1]$, hence the neural network $\nabla_x f$ is only an approximation of the corresponding solution $u$.

To show the representability of the neural network, in each example, we choose the parameters $\{p_i\}_{i=1}^m$ to be the uniform grid points in $[-1,1]$, i.e.,
\begin{equation*}
    p_i = -1 + \frac{2(i-1)}{m-1}, \text{ for } i\in \{1,\dots, m\}.
\end{equation*}
We set $\theta_i = H(p_i)$ and $\gamma_i = J^*(p_i)$ for each $i\in\{1,\dots,m\}$, where $J^*$ is the Fenchel--Legendre transform of the anti-derivative of the initial function $u_0$. Hence, in these two examples, $\gamma_i$ equals for each $i$. Figs. \ref{fig:conservation-representability1} and \ref{fig:conservation-representability2} show the neural network $\nabla_x f$ and the true entropy solution $u$ in these two examples at time $t=1$. 
As expected, the error in Fig. \ref{fig:conservation-representability1} for example 1 is negligible. For example 2, we consider neural networks with 32 and 128 neurons whose graphs are plotted in Figs. \ref{subfig:conservation_eg2_32} and \ref{subfig:conservation_eg2_128}, respectively.
 We observe in these figures
 that the error of the neural networks with the specific parameters decreases as the number of neurons increases. 
In conclusion, the neural network $\nabla_x f$ with the architecture in Fig. \ref{fig:nn_argmax} can represent the solution to the one-dimensional conservation laws given in Eq. \eqref{eqt:conservation} pretty well.
In fact, because of the discontinuity of the activation function, the proposed neural network $\nabla_x f$ has advantages in representing the discontinuity in solution such as shocks, but it requires more neurons when approximating non-constant smooth parts of the solution.
\section{Conclusion}
\label{sec:conclusion}

\noindent
\textbf{Summary of the proposed work.} In this paper, we have established novel mathematical connections between some classes of HJ PDEs with \revision{convex} initial data and neural network architectures. Our \revision{main} results give conditions under which \revision{the neural network architecture illustrated in Fig.~\ref{fig:nn_max}} represents viscosity solutions to HJ PDEs of the form of \eqref{eqn:intro_example}. These results do not rely on universal approximation properties of neural networks; rather, our results show that some neural networks correspond to representation formulas \revision{of solutions to HJ PDEs} whose Hamiltonians and \revision{convex} initial data are obtained from the parameters of the \revision{neural network}. This means that some neural network architectures naturally encode the physics contained in some HJ PDEs \revision{satisfying the conditions in Thm.~\ref{thm:constructHJ}}.

The first neural network architecture that we have proposed is depicted in Fig. \ref{fig:nn_max}. We have shown in Thm. \ref{thm:constructHJ} that under certain conditions on the parameters, this neural network architecture represents the viscosity solution of the HJ PDEs \eqref{eqt:HJ}. The corresponding Hamiltonian and \revision{convex} initial data can be recovered from the parameters of this neural network. As a corollary of this result for the one-dimensional case, we have proposed a second neural network architecture (depicted in Fig. \ref{fig:nn_argmax}) that represents the spatial gradient of the viscosity solution of the HJ PDEs \eqref{eqn:intro_example} (in one dimension), and we have shown in Prop.~\ref{thm:conservation} that under appropriate conditions on the parameters, this neural network corresponds to entropy solutions of the conservation laws \eqref{eqt:conservation_Htilde}.

\revision{Let us emphasize that the neural network architecture depicted in Fig. \ref{fig:nn_max} that represents solutions to the HJ PDEs \eqref{eqt:HJ} allows us to numerically evaluate these solutions in high dimension without using grids or numerical approximations. Our work also paves the way to leverage  efficient technologies and hardware developed for neural networks to compute efficiently solutions to certain HJ PDEs.
}

We have also tested the \revision{performance of the state-of-the-art Adam optimizer using our proposed neural network architecture (depicted in Fig.~\ref{fig:nn_max})} on some inverse problems. Our numerical experiments in Sect. \ref{sec:numerical} show that these problems cannot generally be solved with the Adam optimizer with high accuracy. These numerical results suggest further developments of efficient neural network training algorithms for solving inverse problems with our proposed neural network architectures.

\bigbreak
\noindent
\textbf{Perspectives on other neural network architectures and HJ PDEs.} We now present extensions of the proposed architectures that are viable candidates for representing solutions of HJ PDEs.

First consider the following multi-time HJ PDE \cite{barles2001commutation,cardin2008commuting,darbon2019decomposition,lions1986hopf,motta2006nonsmooth,plaskacz2002oleinik, rochet1985taxation,tho2005hopf}  which reads
\begin{equation} \label{eqt:conclusion-H-J}
\begin{dcases}
\frac{\partial S}{\partial t_j}(\bx, t_1,\dots,t_N) + H_j(\nabla_{\bx} S(\bx, t_1,\dots,t_N)) = 0 \text{ for each }j\in\{1,\dots, N\}, & \text{ in } \R^n \times (0,+\infty)^N, \\
S(\bx,0,\dots,0) = J(\bx), & \text{ in } \R^n.
\end{dcases}
\end{equation}
A generalized Hopf formula \cite{darbon2019decomposition,lions1986hopf,rochet1985taxation} for this multi-time HJ equation is given by
\begin{equation}\label{eqt:conclusion-multitime-Hopf}
S(\bx,t_1, \dots, t_N) = \left(\sum_{i=1}^N t_i H_i + J^* \right)^*(\bx) = \sup_{\bp\in\Rn} \left\{\langle \bp,\bx\rangle - \sum_{j=1}^N t_jH_j(\bp) - J^*(\bp)\right\}, 
\end{equation}
for any $\bx\in\R^n$ and $t_1,\dots, t_N\geqslant 0$.
Based on this formula, we propose a neural network architecture, depicted in Fig. \ref{fig:nn_multitime}, whose mathematical definition is given by 
\begin{equation}\label{eqt:conclusion-nn-multitime}
    f(\bx, t_1,\dots, t_N;\{(\bp_i, \theta_{i1}, \dots, \theta_{iN}, \gamma_i)\}_{i=1}^m) = \max_{i\in \{1,\dots, m\}} \left\{\langle \bp_i, \bx\rangle - \sum_{j=1}^N t_j \theta_{ij} - \gamma_i \right\},
\end{equation}
where $\{(\bp_i, \theta_{i1}, \dots, \theta_{iN}, \gamma_i)\}_{i=1}^m \subset \R^n\times \R^N\times \R$ is the set of parameters. The generalized Hopf formula \eqref{eqt:conclusion-multitime-Hopf} suggests that the neural network architecture depicted in Fig. \ref{fig:nn_multitime} is a good candidate for representing the solution to \eqref{eqt:conclusion-H-J} under appropriate conditions on the parameters of the network. 

\begin{figure}[htbp]
\centering
\includegraphics[width = \textwidth]{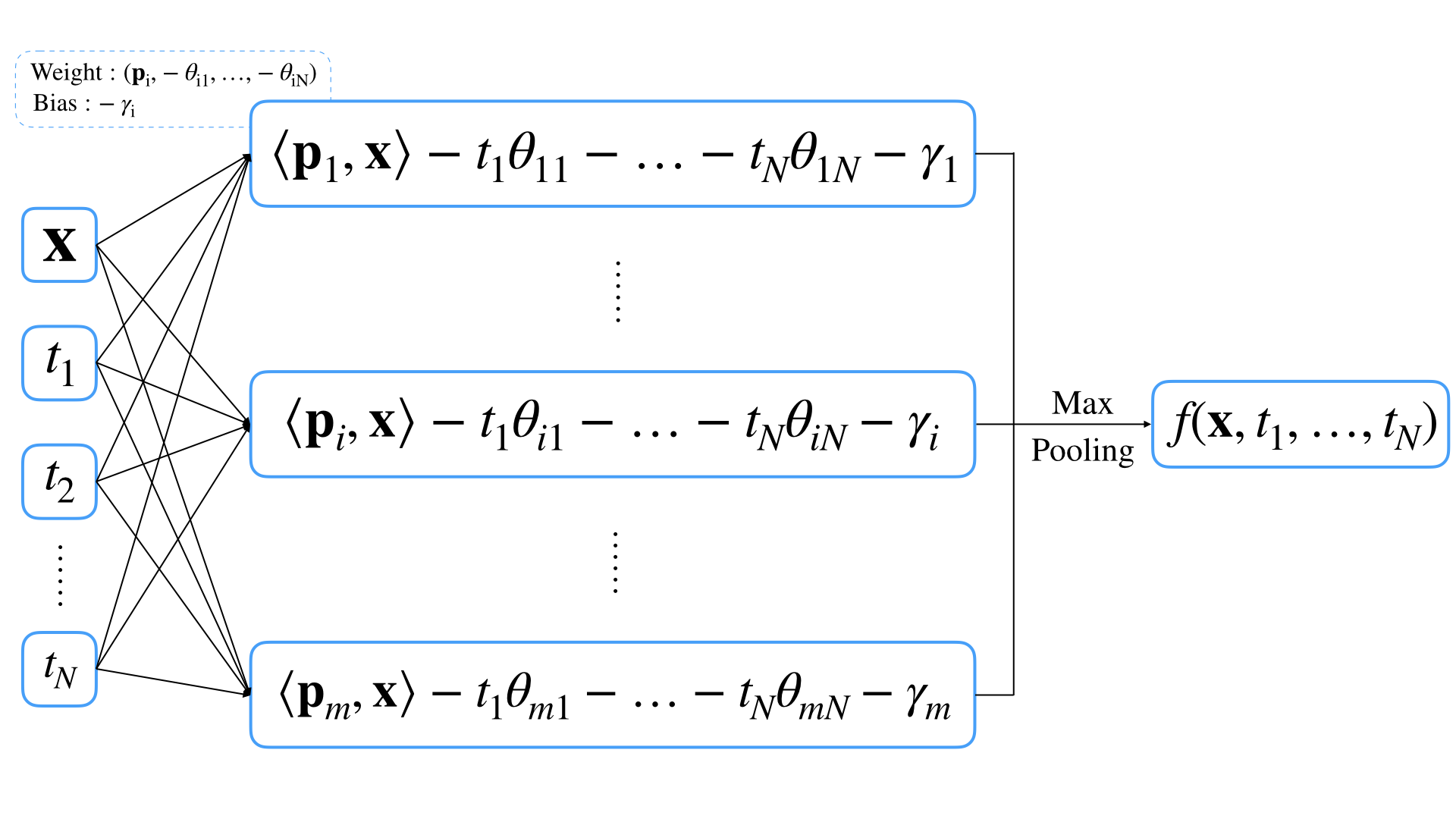}
\caption{
Illustration of the structure of the neural network \eqref{eqt:conclusion-nn-multitime} that can represent  solutions to some first-order multi-time HJ equations.}
\label{fig:nn_multitime}
\end{figure}

As mentioned in \cite{lions1986hopf}, the multi-time HJ equation \eqref{eqt:conclusion-H-J} may not have viscosity solutions. However, under suitable assumptions \cite{barles2001commutation, cardin2008commuting, darbon2019decomposition, motta2006nonsmooth}, the generalized Hopf formula \eqref{eqt:conclusion-multitime-Hopf} is a viscosity solution of the multi-time HJ equation. We intend to clarify the connections between the generalized Hopf formula, multi-time HJ PDEs, viscosity solutions and general solutions in a future work.

\bigbreak

\begin{figure}[htbp]
\centering
\includegraphics[width = \textwidth]{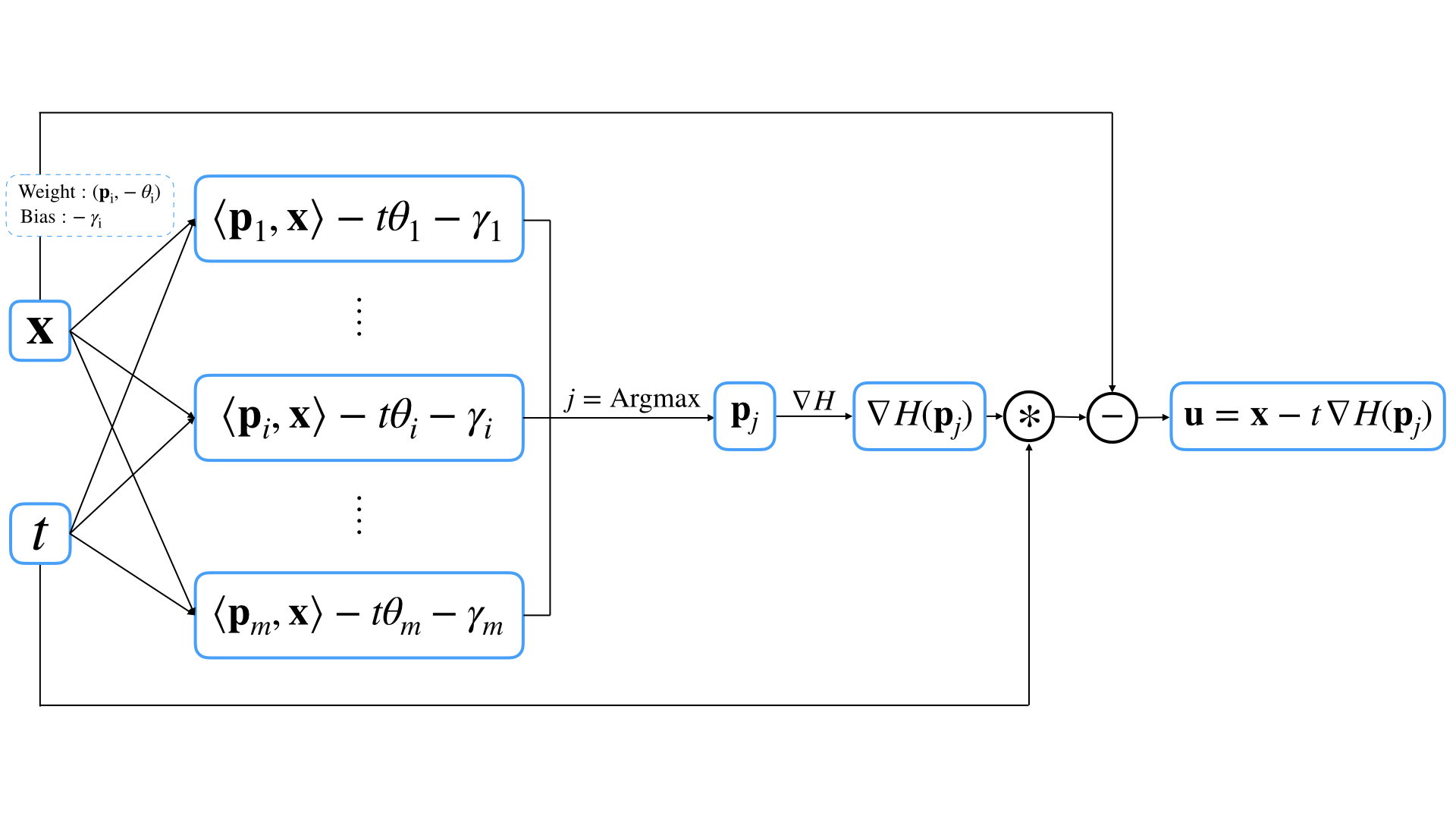}
\caption{Illustration of the structure of the ResNet-type neural network \eqref{eqt:conclusion-nn-resnet} that can represent the minimizer $\bu$ in the Lax-Oleinik formula. Note that the activation function is defined using the gradient of the Hamiltonian $H$, i.e., $\nabla H$.}
\label{fig:nn_resnet}
\end{figure}

\begin{figure}[htbp]
\centering
\includegraphics[width = \textwidth]{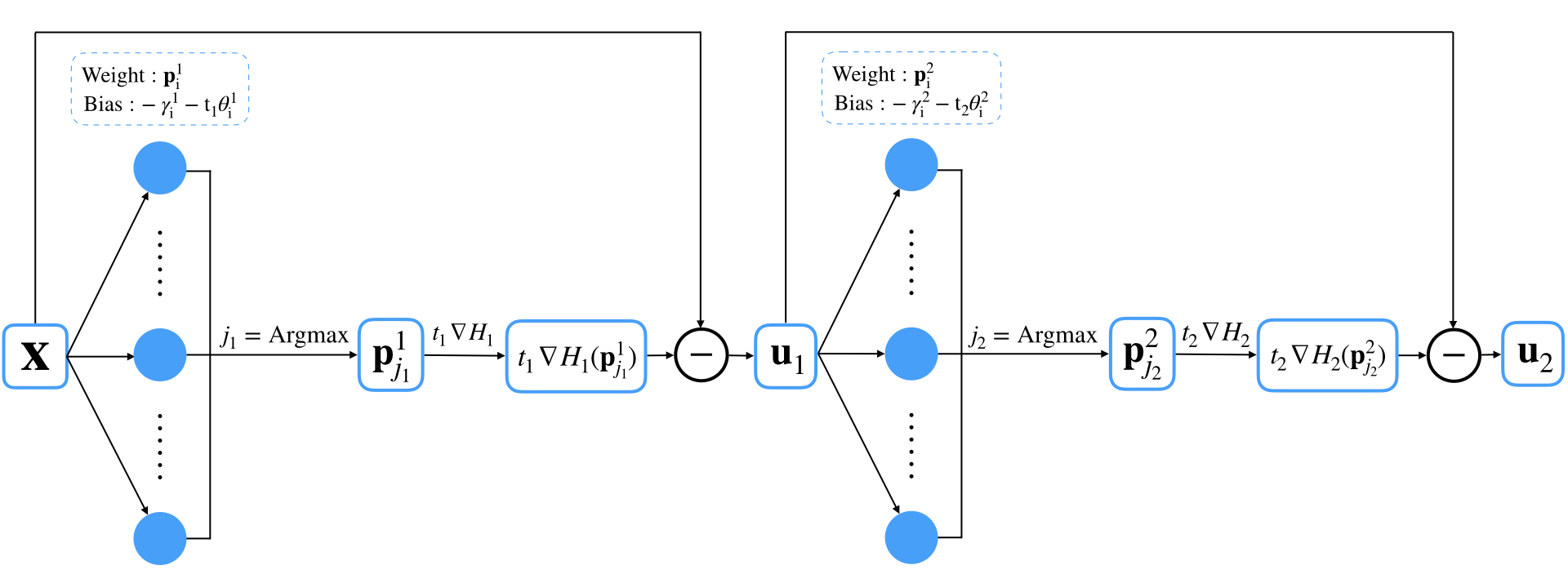}
\caption{Illustration of the structure of the ResNet-type deep neural network \eqref{eqt:conclusion-deepresnet} that can represent the minimizers in the generalized Lax-Oleinik formula for the multi-time HJ PDEs. Note that the activation function in the $k^\text{th}$ layer is defined using the gradient of one Hamiltonian $H_k$, i.e., $\nabla H_k$. This figure only depicts two layers.}
\label{fig:nn_resnet_2layers}
\end{figure}

In \cite{darbon2015convex,darbon2019decomposition}, it is shown that when the Hamiltonian $H$ and the initial data $J$ are both convex, and under appropriate assumptions, the solution $S$ to the following HJ PDE
\begin{equation*}
\begin{dcases} 
\frac{\partial S}{\partial t}(\bx,t)+H(\nabla_{\bx}S(\bx,t))= 0 & \mbox{{\rm in} }\mathbb{R}^{n}\times(0,+\infty),\\
S(\bx,0)=J(\bx) & \mbox{{\rm in} }\mathbb{R}^{n},
\end{dcases}
\end{equation*}
is represented by the Hopf \cite{Hopf1965} and Lax-Oleinik formulas \cite[Sect. 10.3.4]{evans1998partial}. These  formulas read
\begin{align*}
S(\bx,t) &= \max_{\bp\in \R^n}\left\{\langle \bp, \bx\rangle - J^*(\bp) - tH(\bp)\right\} & \text{(Hopf formula)}\\
    &= \min_{\bu\in\R^n} \left\{J(\bu) + t H^*\left(\frac{\bx - \bu}{t}\right)\right\}. & \text{(Lax-Oleinik formula)}
\end{align*}
Let $\bp(\bx,t)$ be the maximizer in the Hopf formula and $\bu(\bx,t)$ be the minimizer in the Lax-Oleinik formula. Then, they satisfy the following relation \cite{darbon2015convex,darbon2019decomposition}
\begin{equation*}
\bu(\bx,t) = \bx - t \nabla H (\bp(\bx,t)).
\end{equation*}
Fig. \ref{fig:nn_resnet} depicts an architecture of a neural network that implements the formula above for the minimizer $\bu(\bx,t)$. In other words, we consider the ResNet-type neural network defined by
\begin{equation}\label{eqt:conclusion-nn-resnet}
    \bu(\bx,t) = \bx - t\nabla H(\bp_j), \text{ where }
    j \in \argmax_{i\in \{1,\dots, m\}} \left\{ \langle \bp_i, \bx\rangle - t\theta_i - \gamma_i \right\}.
\end{equation}
Note that this proposed neural network suggests an interpretation of some ResNet architecture (for details on the ResNet architecture, see \cite{He2016Resnet}) in terms of HJ PDEs. The activation functions of the proposed ResNet architecture is a composition of an argmax based function and $t\nabla H$, where $H$ is the Hamiltonian in the corresponding HJ equation. Moreover, when the time variable is fixed, the input $\bx$ and the output $\bu$ are in the same space $\R^n$, hence one can chain the ResNet structure in Fig. \ref{fig:nn_resnet} to obtain a deep neural network architecture by specifying a sequence of time variables $t_1, t_2, \dots, t_N$. The deep neural network is given by
\begin{equation}\label{eqt:conclusion-deepresnet}
\bu_k = \bu_{k-1} - t_k\nabla H(\bp_{j_k}^k), \quad \text{ for each } k\in \{1,\dots, N\},
\end{equation}
where $\bu_0 = \bx$ and $\bp_{j_k}^k$ is the output of the argmax based activation function in the $k^{\text{th}}$ layer.
For the case when $N=2$, an illustration of this deep ResNet architecture with two layers is shown in Fig. \ref{fig:nn_resnet_2layers}. In fact, this deep ResNet architecture can be formulated as follows
\begin{equation*}
    \bu_N = \bx - \sum_{k=1}^N t_k \nabla H(\bp_{j_k}^k).
\end{equation*}
This formulation suggests that this architecture should also provide the minimizers of the generalized Lax-Oleinik formula for the multi-time HJ PDEs \cite{darbon2019decomposition}. These ideas and perspectives will be presented in detail in a forthcoming paper. 
\appendix

\section{Proofs of lemmas \revision{in Section \ref{subsec:set-up}}}
\subsection{Proof of Lemma \ref{lem:formulaJstar}} \label{sec:prooflem31}
Proof of (i): The convex and lower semicontinuous function $J^*$ satisfies Eq. \eqref{eqt:defJstar} by  \cite[Prop. X.3.4.1]{hiriart2013convexII}. It is also finite and continuous over its polytopal domain $\dom J^* = \conv{\left(\{\bp_i\}_{i=1}^{m}\right)}$ \cite[Thms. 10.2 and 20.5]{rockafellar1970convex}, and moreover the subdifferential $\partial J^*(\bp)$ is non-empty by \cite[Thm. 23.10]{rockafellar1970convex}.

Proof of (ii): First, suppose the vector $(\alpha_1, \dots, \alpha_m)\in\R^m$ satisfies the constraints (a)--(c). Since $\bx\in\partial J^*(\bp)$, there holds $J^*(\bp) = \langle \bp,\bx\rangle - J(\bx)$ \cite[Cor. X.1.4.4]{hiriart2013convexII}, and using the definition of the set $I_{\bx}$ \eqref{eqt:defIx} and constraints (a)--(c) we deduce that
\begin{equation*}
\begin{split}
    J^*(\bp) &= \langle \bp,\bx\rangle - J(\bx)
    = \langle \bp, \bx\rangle - \sum_{i\in I_{\bx}} \alpha_i J(\bx)\\
    &= \langle \bp, \bx\rangle - \sum_{i\in I_{\bx}} \alpha_i (\langle \bp_i, \bx\rangle - \gamma_i)\\
    &= \left\langle \bp - \sum_{i\in I_{\bx}} \alpha_i\bp_i, \bx\right\rangle + \sum_{i\in I_{\bx}} \alpha_i \gamma_i
    = \sum_{i=1}^m \alpha_i\gamma_i.
\end{split}
\end{equation*}
Therefore, $(\alpha_1,\dots, \alpha_m)$ is a minimizer in Eq. \eqref{eqt:defJstar}. Second, let $(\alpha_1,\dots, \alpha_m)$ be a minimizer in Eq. \eqref{eqt:defJstar}. Then (a)--(b) follows directly from the constraints in Eq. \eqref{eqt:defJstar}. A similar argument as above yields
\begin{equation*}
\begin{split}
    J(\bx) &= \langle \bp, \bx\rangle - J^*(\bp)
    = \left\langle \sum_{i=1}^m \alpha_i\bp_i, \bx\right\rangle - \sum_{i=1}^m \alpha_i\gamma_i
    = \sum_{i=1}^m \alpha_i\left(\langle \bp_i, \bx\rangle - \gamma_i\right).
\end{split}
\end{equation*}
But $J(\bx)= \max_{i\in\{1,\dots,m\}}\{\left\langle \boldsymbol{p}_{i},\boldsymbol{x}\right\rangle -\gamma_{i}\}$ by definition, and so there holds $\alpha_i = 0$ whenever $J(\bx)\neq \langle \bp_i, \bx\rangle - \gamma_i$. In other words, $\alpha_i=0$ whenever $i\not\in I_{\bx}$.

Proof of (iii): Let $(\beta_1,\dots, \beta_m) \in \unitsim_m$ satisfy $\sum_{i=1}^m\beta_i \bp_i = \bp_k$. By assumption (A2), we have $\gamma_k=g(\bp_k)$ with $g$ convex, and hence Jensen's inequality yields
\begin{equation*}
    \sum_{i=1}^m\delta_{ik}\gamma_i = \gamma_k = g(\bp_k) = g\left(\sum_{i=1}^m \beta_i \bp_i\right) \leqslant \sum_{i=1}^m \beta_i g(\bp_i) = \sum_{i=1}^m \beta_i \gamma_i.
\end{equation*}
Therefore, the vector $(\delta_{1k},\dots,\delta_{mk})$ is a minimizer in Eq. \eqref{eqt:defJstar} at the point $\bp_k$, and $J^*(\bp_k) = \gamma_k$ follows.
\subsection{Proof of Lemma \ref{lem:Hprop}} \label{sec:prooflem32}

Proof of (i): Let $\bp \in \dom J^*$. The set $\mathcal{A}(\bp) \subseteq \unitsim_m$ is non-empty and bounded by Lem. \ref{lem:formulaJstar}(i), and it is closed since $\mathcal{A}(\bp)$ is the solution set to the linear programming problem \eqref{eqt:defJstar}. Hence, $\mathcal{A}(\bp)$ is compact. As a result, we immediately have that $H(\bp)<+\infty$. Moreover, for each $(\alpha_1,\dots,\alpha_m)\in \mathcal{A}(\bp)$ there holds
\begin{equation*}
-\infty < \min_{i=\{1,\dots, m\}} \theta_i \leqslant \sum_{i=1}^m \alpha_i \theta_i \leqslant \max_{i=\{1,\dots, m\}} \theta_i < +\infty,
\end{equation*} 
from which we conclude that $H$ is a bounded function on $\dom J^*$. Since the target function in the minimization problem \eqref{eqt:defH} is continuous, existence of a minimizer follows by compactness of $\mathcal{A}(\bp)$.

Proof of (ii): We have already shown in the proof of (i) that the restriction of $H$ to $\dom J^*$ is bounded, and so it remains to prove its continuity. For any $\bp\in \dom J^*$, we have that $(\alpha_1,\dots,\alpha_m)\in\mathcal{A}(\bp)$ if and only if $(\alpha_1,\dots, \alpha_m)\in \unitsim_m$, $\sum_{i=1}^m \alpha_i\bp_i = \bp$, and $\sum_{i=1}^m \alpha_i\gamma_i = J^*(\bp)$. As a result, we have
\begin{equation}\label{eqt:proofLem2H}
    H(\bp) = \min \left\{\sum_{i=1}^m\alpha_i \theta_i:\ (\alpha_1,\dots,\alpha_m)\in \unitsim_m, 
    \ \sum_{i=1}^m \alpha_i\bp_i = \bp, \ \sum_{i=1}^m \alpha_i\gamma_i = J^*(\bp) \right\}.
\end{equation}
Define the function $h\colon\ \R^{n+1}\to \R\cup\{+\infty\}$ by 
\begin{equation} \label{eqt:proofLem2h}
\begin{split}
    h(\bp,r) \coloneqq \min \left\{\sum_{i=1}^m\alpha_i \theta_i: (\alpha_1,\dots,\alpha_m)\in \unitsim_m, 
    \ \sum_{i=1}^m \alpha_i\bp_i = \bp, \ \sum_{i=1}^m \alpha_i\gamma_i = r \right\},
\end{split}
\end{equation}
for any $\bp\in \R^n$ and $r\in\R$. Using the same argument as in the proof of Lem. \ref{lem:formulaJstar}(i), we conclude that $h$ is a convex lower semicontinuous function, and in fact continuous over its domain $\dom h = \conv{\{(\bp_i, \gamma_i)\}_{i=1}^m}$. Comparing Eq. \eqref{eqt:proofLem2H} and the definition of $h$ in \eqref{eqt:proofLem2h}, we deduce that $H(\bp) = h(\bp, J^*(\bp))$ for any $\bp\in\dom J^*$. Continuity of $H$ in $\dom J^*$ then follows from the continuity of $h$ and $J^*$ in their own domains.

Proof of (iii): Let $k\in\{1,\dots,m\}$. On the one hand, Lem. \ref{lem:formulaJstar}(iii) implies $(\delta_{1k},\dots, \delta_{mk})\in \mathcal{A}(\bp_k)$, so that
\begin{equation}\label{eqt:proof2Hpk}
H(\bp_k)\leqslant \sum_{i=1}^m \delta_{ik}\theta_i = \theta_k.
\end{equation}
On the other hand, let $(\alpha_1,\dots, \alpha_m)\in\mathcal{A}(\bp_k)$ be a vector different from $(\delta_{k1},\dots, \delta_{km})$. Then $(\alpha_1,\dots, \alpha_m) \in \unitsim_m$ satisfies $\sum_{i=1}^m \alpha_i\bp_i = \bp$, $\sum_{i=1}^m \alpha_i\gamma_i = J^*(\bp)$, and $\alpha_k<1$. Define $(\beta_1,\dots, \beta_m) \in \unitsim_m$ by
\begin{equation*}
    \beta_j \coloneqq \begin{dcases}
    \frac{\alpha_j}{1-\alpha_k}, & \text{if }j\neq k,\\
    0, &\text{if }j=k.
    \end{dcases}
\end{equation*}
A straightforward computation using the properties of $(\alpha_1,\dots, \alpha_m)$, Lem. \ref{lem:formulaJstar}(iii), and the definition of $(\beta_1,\dots, \beta_m)$ yields
\begin{equation*}
    \begin{cases}
    \begin{aligned}
    &(\beta_1,\dots, \beta_m)\in\unitsim_m \text{ with }\beta_k = 0,\\
    &\sum_{i\neq k}\beta_i \bp_i = \sum_{i\neq k}\frac{\alpha_i \bp_i}{1-\alpha_k} = \frac{\bp_k - \alpha_k\bp_k}{1-\alpha_k} = \bp_k,\\
    &\sum_{i\neq k}\beta_i \gamma_i = \sum_{i\neq k}\frac{\alpha_i \gamma_i}{1-\alpha_k} = \frac{J^*(\bp_k) - \alpha_k\gamma_k}{1-\alpha_k}= \frac{\gamma_k - \alpha_k\gamma_k}{1-\alpha_k} =\gamma_k.
    \end{aligned}
    \end{cases}
\end{equation*}
In other words, Eq. \eqref{eqt:assumption3} holds at index $k$, which, by assumption (A3), implies that $\sum_{i\neq k}\beta_i \theta_i > \theta_k$. As a result, we have
\begin{equation*}
    \sum_{i=1}^m\alpha_i \theta_i = \alpha_k\theta_k + (1-\alpha_k)\sum_{i\neq k}\beta_i \theta_i > \alpha_k\theta_k + (1-\alpha_k)\theta_k = \theta_k = \sum_{i=1}^m \delta_{ik}\theta_i.
\end{equation*}
Taken together with Eq. \eqref{eqt:proof2Hpk}, we conclude that $(\delta_{1k},\dots, \delta_{mk})$ is the unique minimizer in \eqref{eqt:defH}, and hence we obtain $H(\bp_k) = \theta_k$.

\section{\revision{Proof of Theorem \ref{thm:constructHJ}}} \label{sec:pf_thmconstructHJ}
\revision{To prove this theorem, we will use three lemmas whose statements and proofs are given in Sect. \ref{sec:prooflem33}, \ref{sec:prooflem34}, and \ref{sec:prooflem35}, respectively. The proof of Thm.~\ref{thm:constructHJ} is given in Sect. \ref{subsec:pf_constructHJ}.}

\subsection{\revision{Statement and proof of Lemma \ref{lem:equivSHandf}}} \label{sec:prooflem33}
\begin{lem}\label{lem:equivSHandf}
Suppose the parameters $\{(\bp_i,\theta_i,\gamma_i)\}_{i=1}^{m}\subset \R^n \times \R \times \R$ satisfy assumptions (A1)-(A3). Let $J$ and $H$ be the functions defined in Eqs. (\ref{eqt:defJ}) and (\ref{eqt:defH}), respectively. Let $\tilde{H}\colon\R^n\to \R$ be a continuous function satisfying $\tilde{H}(\bp_i) = H(\bp_i)$ for each $i\in\{1,\dots,m\}$ and $\tilde{H}(\bp)\geqslant H(\bp)$ for all $\bp\in \dom J^*$. Then the neural network $f$ defined in Eq. (\ref{eqt:deff}) satisfies
\begin{equation}\label{eqt:defSH}
    f(\bx,t) \coloneqq \max_{i\in\{1,\dots,m\}}\{\left\langle \boldsymbol{p}_{i},\boldsymbol{x}\right\rangle -t\theta_{i}-\gamma_{i}\} = \sup_{\bp\in \dom J^*}\left\{\langle \bp,\bx\rangle - t\tilde{H}(\bp) - J^*(\bp)\right\}.
\end{equation}
\end{lem}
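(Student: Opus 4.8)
The plan is to read the right-hand side of \eqref{eqt:defSH} as a Hopf-type representation formula and to establish the identity by proving the two inequalities $f(\bx,t)\leqslant \sup_{\bp\in\dom J^*}\{\cdots\}$ and $f(\bx,t)\geqslant \sup_{\bp\in\dom J^*}\{\cdots\}$ separately, relying only on Lemmas~\ref{lem:formulaJstar} and~\ref{lem:Hprop}, the hypotheses on $\tilde H$, and the fact that $t\geqslant 0$.

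For the first inequality, I would fix $i\in\{1,\dots,m\}$ and evaluate the function $\bp\mapsto \langle\bp,\bx\rangle - t\tilde H(\bp) - J^*(\bp)$ at $\bp=\bp_i$; this is legitimate since $\bp_i\in\conv(\{\bp_j\}_{j=1}^m)=\dom J^*$ by Lemma~\ref{lem:formulaJstar}(i). By Lemma~\ref{lem:formulaJstar}(iii) we have $J^*(\bp_i)=\gamma_i$, and by hypothesis together with Lemma~\ref{lem:Hprop}(iii) we have $\tilde H(\bp_i)=H(\bp_i)=\theta_i$; hence the value at $\bp_i$ equals $\langle\bp_i,\bx\rangle - t\theta_i-\gamma_i$. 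Taking the maximum over $i$ gives $f(\bx,t)\leqslant\sup_{\bp\in\dom J^*}\{\langle\bp,\bx\rangle - t\tilde H(\bp)-J^*(\bp)\}$.

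For the reverse inequality, I would fix an arbitrary $\bp\in\dom J^*$. Since $\tilde H(\bp)\geqslant H(\bp)$ and $t\geqslant 0$, there holds $\langle\bp,\bx\rangle - t\tilde H(\bp)-J^*(\bp)\leqslant \langle\bp,\bx\rangle - tH(\bp)-J^*(\bp)$. By Lemma~\ref{lem:Hprop}(i) the infimum in the definition \eqref{eqt:defH} of $H(\bp)$ is attained, so there is $\bm{\alpha}=(\alpha_1,\dots,\alpha_m)\in\mathcal{A}(\bp)$ with $\sum_{i=1}^m\alpha_i\theta_i=H(\bp)$; being a minimizer in \eqref{eqt:defJstar}, this $\bm{\alpha}$ also satisfies $\bm{\alpha}\in\unitsim_m$, $\sum_{i=1}^m\alpha_i\bp_i=\bp$, and $\sum_{i=1}^m\alpha_i\gamma_i=J^*(\bp)$. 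Substituting these three identities yields
\[
\langle\bp,\bx\rangle - tH(\bp) - J^*(\bp) = \sum_{i=1}^m\alpha_i\left(\langle\bp_i,\bx\rangle - t\theta_i - \gamma_i\right) \leqslant \max_{i\in\{1,\dots,m\}}\{\langle\bp_i,\bx\rangle - t\theta_i-\gamma_i\} = f(\bx,t),
\]
where the inequality uses $\bm{\alpha}\in\unitsim_m$. Taking the supremum over $\bp\in\dom J^*$ completes the argument; as a byproduct the supremum is finite and attained, e.g.\ at any $\bp_i$ realizing the maximum defining $f(\bx,t)$.

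There is no genuine obstacle here once $J^*$, $\mathcal{A}(\bp)$, $H$ and the two preparatory lemmas are available: both inequalities are essentially one-line computations. The only points to handle with care are to select the \emph{particular} minimizer $\bm{\alpha}$ attaining $H(\bp)$ rather than an arbitrary element of $\mathcal{A}(\bp)$, and to note that it is precisely the sign condition $t\geqslant 0$ that allows the hypothesis $\tilde H\geqslant H$ on $\dom J^*$ to be invoked.
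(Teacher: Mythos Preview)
Your proof is correct and follows essentially the same approach as the paper's own argument: both establish the two inequalities separately, using Lemmas~\ref{lem:formulaJstar}(iii) and~\ref{lem:Hprop}(iii) together with the hypothesis $\tilde H(\bp_i)=H(\bp_i)$ to get $f\leqslant\sup$, and using a minimizer $\bm\alpha\in\mathcal A(\bp)$ attaining $H(\bp)$ (whose existence is guaranteed by Lemma~\ref{lem:Hprop}(i)) to get $\sup\leqslant f$. The only cosmetic difference is that the paper treats the inequality $\sup\leqslant f$ first.
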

\begin{proof}
Let $\bx\in\R^n$ and $t\geqslant 0$. Since $\tilde{H}(\bp)\geqslant H(\bp)$ for every $\bp\in \dom J^*$, we get
\begin{equation}\label{eqt:pflem33_eqt1}
    \langle \bp,\bx\rangle - t\tilde{H}(\bp) - J^*(\bp)
    \leqslant \langle \bp,\bx\rangle - tH(\bp) - J^*(\bp).
\end{equation}
Let $(\alpha_1,\dots, \alpha_m)$ be a minimizer in \eqref{eqt:defH}. By Eqs. \eqref{eqt:defJstar}, \eqref{eqt:defsetA}, and \eqref{eqt:defH}, we have
\begin{equation}\label{eqt:pflem33_eqt2}
    \bp = \sum_{i=1}^{m}\alpha_i\bp_i,\quad H(\bp) = \sum_{i=1}^{m}\alpha_i\theta_i, \quad \text{ and } \quad J^*(\bp)=\sum_{i=1}^{m}\alpha_i\gamma_i.
\end{equation}
Combining Eqs. \eqref{eqt:pflem33_eqt1}, \eqref{eqt:pflem33_eqt2}, and \eqref{eqt:deff}, we get
\begin{equation*}
    \begin{split}
        \langle \bp,\bx\rangle - t\tilde{H}(\bp) - J^*(\bp)
    &\leqslant \sum_{i=1}^m \alpha_i (\langle \bp_i, \bx\rangle -t \theta_i - \gamma_i)\\
    &\leqslant \max_{i\in\{1,\dots,m\}} \{\langle \bp_i, \bx\rangle -t \theta_i - \gamma_i\} = f(\bx,t),
    \end{split}
\end{equation*}
where the second inequality follows from the constraint $(\alpha_1,\dots,\alpha_m)\in\unitsim_m$. Since $\bp \in \dom J^*$ is arbitrary, we obtain
\begin{equation}\label{eqt:SHlessthanf}
    \sup_{\bp\in \dom J^*}\left\{\langle \bp,\bx\rangle - t\tilde{H}(\bp) - J^*(\bp)\right\} \leqslant f(\bx,t). 
\end{equation}

Now, by Lem. \ref{lem:formulaJstar}(iii), Lem. \ref{lem:Hprop}(iii), and the assumptions on $\tilde{H}$, we have
\begin{equation*}
    \tilde{H}(\bp_k)= H(\bp_k) = \theta_k, \quad \text{ and } \quad J^*(\bp_k) = \gamma_k,
\end{equation*}
for each $k\in\{1,\dots, m\}$. A straightforward computation yields
\begin{equation}\label{eqt:SHgeqf}
\begin{split}
    f(\bx,t) &= \max_{i\in\{1,\dots,m\}} \{\langle \bp_i, \bx\rangle -t \theta_i - \gamma_i\} \\
    &= \max_{i\in\{1,\dots,m\}} \left\{\langle \bp_i, \bx\rangle -t \tilde{H}(\bp_i) - J^*(\bp_i)\right\}\\
    &\leqslant \sup_{\bp\in\dom J^*} \left\{\langle \bp, \bx\rangle -t \tilde{H}(\bp) - J^*(\bp)\right\},
\end{split}
\end{equation}
where the inequality holds since $\bp_i\in \dom J^*$ for every $i\in\{1,\dots,m\}$. The conclusion then follows from Eqs. \eqref{eqt:SHlessthanf} and \eqref{eqt:SHgeqf}.
\end{proof}

\subsection{\revision{Statement and proof of Lemma \ref{lem:existdiffpt}}} \label{sec:prooflem34}
\begin{lem}\label{lem:existdiffpt}
Suppose the parameters $\{(\bp_i,\theta_i,\gamma_i)\}_{i=1}^{m}\subset \R^n \times \R \times \R$ satisfy assumptions (A1)-(A3). For every $k\in \{1,\dots, m\}$, there exist $\bx\in\R^n$ and $t>0$ such that $f(\cdot, t)$ is differentiable at $\bx$ and $\nabla_{\bx} f(\bx,t) = \bp_k$.
\end{lem}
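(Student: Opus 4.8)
The plan is to show that for each fixed $k\in\{1,\dots,m\}$ there exist $\bx_0\in\R^n$ and $t_0>0$ at which the $k$-th affine function $\bx\mapsto\langle\bp_k,\bx\rangle-t_0\theta_k-\gamma_k$ is \emph{strictly} larger than all the others; on a neighbourhood of $\bx_0$ the function $f(\cdot,t_0)$ then coincides with this one affine function, hence is differentiable at $\bx_0$ with $\nabla_\bx f(\bx_0,t_0)=\bp_k$. So the whole problem reduces to producing such a point with $t_0>0$.

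Call $k$ \emph{effective at time $t$} if there is $\bx\in\R^n$ with $\langle\bp_k,\bx\rangle-t\theta_k-\gamma_k>\langle\bp_i,\bx\rangle-t\theta_i-\gamma_i$ for all $i\neq k$. The first step is to translate non-effectiveness into a statement about convex combinations: $k$ is not effective at time $t$ iff $\langle\bp_k,\cdot\rangle-t\theta_k-\gamma_k\le\max_{i\neq k}\{\langle\bp_i,\cdot\rangle-t\theta_i-\gamma_i\}$ pointwise, and, taking Fenchel conjugates of the right-hand side exactly as in Lemma \ref{lem:formulaJstar}(i), this is equivalent to the existence of $\alpha=(\alpha_1,\dots,\alpha_m)$ in
\begin{equation*}
K\coloneqq\Big\{\alpha\in\unitsim_m:\ \alpha_k=0,\ \textstyle\sum_{i\neq k}\alpha_i\bp_i=\bp_k\Big\}
\end{equation*}
with $\sum_{i\neq k}\alpha_i(t\theta_i+\gamma_i)\le t\theta_k+\gamma_k$. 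Here $K$ is compact, and by assumption (A2) and Jensen's inequality every $\alpha\in K$ satisfies $\sum_{i\neq k}\alpha_i\gamma_i\ge\gamma_k$ (if $K=\varnothing$, i.e. $\bp_k\notin\conv\{\bp_i:i\neq k\}$, then $k$ is trivially effective at every $t$ and we are done).

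The heart of the argument is to show $k$ is effective at \emph{some} $t>0$. Suppose not; then for each $n\in\N$ there is $\alpha^{(n)}\in K$ with $\frac1n\big(\sum_{i\neq k}\alpha^{(n)}_i\theta_i-\theta_k\big)+\big(\sum_{i\neq k}\alpha^{(n)}_i\gamma_i-\gamma_k\big)\le 0$. Since the $\gamma$-term is nonnegative, the $\theta$-term is $\le 0$, so the $\gamma$-term is at most $\frac1n\big|\sum_{i\neq k}\alpha^{(n)}_i\theta_i-\theta_k\big|$, which is $O(1/n)$ because $\alpha^{(n)}\in\unitsim_m$. Extracting a subsequence along which $\alpha^{(n)}\to\bar\alpha\in K$ by compactness, we obtain $\sum_{i\neq k}\bar\alpha_i\gamma_i=\gamma_k$ and $\sum_{i\neq k}\bar\alpha_i\theta_i\le\theta_k$. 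But then $\bar\alpha$ together with $j=k$ satisfies all hypotheses of assumption (A3), which forces $\sum_{i\neq k}\bar\alpha_i\theta_i>\theta_k$, a contradiction. Hence some $t_0>0$ works, giving the desired $\bx_0$, and the neighbourhood argument of the first paragraph finishes the proof.

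I expect the passage to the limit to be the delicate point: one must extract from the family of convex-combination certificates indexed by $t\downarrow 0$ a single limit certificate $\bar\alpha$ that in addition lies on the face $\{\sum_{i\neq k}\alpha_i\gamma_i=\gamma_k\}$, which is exactly where assumption (A3) applies. What makes this go through is the quantitative pairing (A2) supplies a nonnegative lower bound on the $\gamma$-defect while the hypothesis gives an $O(1/n)$ upper bound on it, pinning the limit onto that face.
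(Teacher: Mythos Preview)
Your argument is correct and is genuinely different from the paper's. The paper proceeds constructively: it first handles the easy case where $k$ is already strictly maximal at $t=0$ (then continuity gives small $t>0$), and in the remaining case it builds the point by hand, taking $\bx_0\in\partial J^*(\bp_k)$, defining an auxiliary function $h$ on $\{\bp_i:i\in I_{\bx_0}\}$ with values $\theta_i$, picking $\bv_0\in\partial(\cobar h)(\bp_k)$, and setting $\bx=\bx_0+t\bv_0$; it then checks the strict inequality against each $i\neq k$ by a two-case analysis ($i\in I_{\bx_0}$ versus $i\notin I_{\bx_0}$), using (A3) in the first case. Your route replaces all of this machinery by a clean duality-plus-compactness contradiction: you characterise non-effectiveness of $k$ at time $t$ as the existence of a certificate $\alpha\in K$ with $\sum_{i\neq k}\alpha_i(t\theta_i+\gamma_i)\le t\theta_k+\gamma_k$, send $t\downarrow 0$, and use (A2)/Jensen to force the limit certificate onto the face where (A3) bites.

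What each buys: the paper's construction gives an explicit $(\bx,t)$ and in fact shows one may take \emph{any} sufficiently small $t>0$ (a fact used later in the proof of Proposition~\ref{thm:conservation}(ii)). Your proof is shorter and conceptually transparent, but as written only yields existence of \emph{some} $t_0>0$; if you need the ``all small $t$'' statement downstream, note that your equivalence plus the contrapositive of the limit argument actually give it too: were $k$ non-effective along a sequence $t_n\downarrow 0$, the same compactness extraction would produce the same contradiction.
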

\begin{proof}
Since $f$ is the supremum of a finite number of affine functions by definition ($\ref{eqt:deff}$), it is finite-valued and convex for $t\geqslant 0$. As a result, $\nabla_{\bx} f(\bx,t) = \bp_k$ is equivalent to $\partial (f(\cdot, t))(\bx) = \{\bp_k\}$, and so it suffices to prove that $\partial (f(\cdot, t))(\bx) = \{\bp_k\}$ for some $\bx\in\R^n$ and $t>0$. To simplify the notation, we use $\partial_{\bx} f(\bx,t)$ to denote the subdifferential of $f(\cdot, t)$ at $\bx$.

By \cite[Thm. VI.4.4.2]{hiriart2013convexI}, the subdifferential of $f(\cdot, t)$ at $\bx$ is the convex hull of the $\bp_i$'s whose indices $i$'s are maximizers in (\ref{eqt:deff}), that is, 
\begin{equation*}
\partial_{\bx} f(\bx,t) = \co \{\bp_i: i \text{ is a maximizer in (\ref{eqt:deff})}\}.
\end{equation*}
It suffices then to prove the existence of $\bx\in\R^n$ and $t>0$ such that
\begin{equation}\label{eqt:ineq_relation}
    \langle \bp_k,\bx\rangle -t\theta_k - \gamma_k > \langle \bp_i,\bx\rangle -t\theta_i - \gamma_i, \quad \text{ for every $i\neq k$.}
\end{equation}

First, consider the case when there exists $\bx\in\R^n$ such that $\langle \bp_k, \bx\rangle -\gamma_k > \langle \bp_i,\bx\rangle -\gamma_i$ for every $i\neq k$. In that case, by continuity, there exists small $t>0$ such that $\langle \bp_k,\bx\rangle -t\theta_k - \gamma_k > \langle \bp_i,\bx\rangle -t\theta_i - \gamma_i$ for every $i\neq k$ and so (\ref{eqt:ineq_relation}) holds.

Now, consider the case when there does not exist $\bx\in\R^n$ such that $\langle \bp_k, \bx\rangle -\gamma_k > \max_{i\neq k}\{\langle \bp_i,\bx\rangle -\gamma_i\}$. In other words, we assume
\begin{equation}\label{eqt:prooflem4_J}
    J(\bx) = \max_{i\neq k} \{\langle \bp_i,\bx\rangle - \gamma_i\} \text{ for every }\bx\in\R^n.
\end{equation}
We apply Lem. \ref{lem:formulaJstar}(i) to the formula above and obtain 
\begin{equation}\label{eqt:proofprop1_Jstar}
    J^*(\bp_k) = 
\min\left\{\sum_{i=1}^{m}\alpha_{i}\gamma_{i}: (\alpha_{1},\dots,\alpha_{m})\in\unitsim_m,\ 
\sum_{i=1}^{m}\alpha_{i}\bp_{i}=\bp_k,\ 
\alpha_k = 0\right\}.
\end{equation}
Let $\bx_0\in \partial J^*(\bp_k)$.
Denote by $I_{\bx_0}$ the set of maximizers in Eq. (\ref{eqt:prooflem4_J}) at the point $\bx_0$, i.e.,
\begin{equation}\label{eqt:pflem34_defIx0}
    I_{\bx_0}\colon= \argmax_{i\neq k} \{\langle \bp_i,\bx\rangle - \gamma_i\}.
\end{equation}
Note that we have $k\not\in I_{\bx_0}$ by definition of $I_{\bx_0}$.
Define a function $h\colon \R^n\to \R\cup\{+\infty\}$ by
\begin{equation}\label{eqt:proofLem4defh}
    h(\bp) \coloneqq \begin{cases}
    \theta_i, &\text{if }\bp = \bp_i \text{ and }i\in I_{\bx_0},\\
    +\infty, &\text{otherwise}.
    \end{cases}
\end{equation}
Denote the convex lower semicontinuous envelope of $h$ by $\cobar h$. Since $\bx_0\in \partial J^*(\bp_k)$, we can use \cite[Thm. VI.4.4.2]{hiriart2013convexI} and the definition of $I_{\bx_0}$ and $h$ in Eqs. (\ref{eqt:pflem34_defIx0}) and (\ref{eqt:proofLem4defh}) to deduce
\begin{equation}\label{eqt:pflem34_bpkinsubdiff}
\bp_k \in \partial J(\bx_0) = \co \{\bp_i: i\in I_{\bx_0}\} = \dom \cobar h. 
\end{equation}
Hence the point $\bp_k$ is in the domain of the polytopal convex function $\cobar h$. 
Then \cite[Thm. 23.10]{rockafellar1970convex} implies $\partial (\cobar h) (\bp_k)\neq \emptyset$. Let $\bv_0 \in \partial (\cobar h)(\bp_k)$ and $\bx=\bx_0+t\bv_0$. It remains to choose suitable positive $t$ such that (\ref{eqt:ineq_relation}) holds. Letting $\bx=\bx_0+t\bv_0$ in (\ref{eqt:ineq_relation}) yields
\begin{equation}\label{eqt:proofprop1_Ldiff}
    \begin{split}
        &\langle \bp_k,\bx\rangle -t\theta_k - \gamma_k - \left(\langle \bp_i,\bx\rangle -t\theta_i - \gamma_i\right)\\
    =\ &\langle \bp_k,\bx_0+t\bv_0\rangle - t\theta_k -\gamma_k - (\langle \bp_i,\bx_0+t\bv_0\rangle -t\theta_i- \gamma_i)\\
    =\ &\langle \bp_k,\bx_0\rangle -\gamma_k - (\langle \bp_i,\bx_0\rangle - \gamma_i)
    + t(\theta_i-\theta_k - \langle \bp_i-\bp_k, \bv_0\rangle ).
    \end{split}
\end{equation}
Now, we consider two situations, the first when $i\not\in I_{\bx_0} \cup\{k\}$ and the second when $i\in I_{\bx_0}$. It suffices to prove (\ref{eqt:ineq_relation}) hold in each case for small enough positive $t$. 

If $i\not\in I_{\bx_0}\cup\{k\}$, then $i$ is not a maximizer in Eq. (\ref{eqt:prooflem4_J}) at the point $\bx_0$.  By (\ref{eqt:pflem34_bpkinsubdiff}), $\bp_k$ is a convex combination of the set $\{\bp_i: i\in I_{\bx_0}\}$. In other words, there exists $(c_1,\dots,c_m)\in\unitsim_m$ such that $\sum_{j=1}^mc_j\bp_j=\bp_k$ and $c_j=0$ whenever $j\not\in I_{\bx_0}$. Taken together with assumption (A2) and Eqs. (\ref{eqt:defJ}), (\ref{eqt:prooflem4_J}), (\ref{eqt:pflem34_defIx0}), we have
\begin{equation*}
\begin{split}
    J(\bx_0)&\geqslant \langle \bp_k,\bx_0\rangle - \gamma_k 
    = \langle \bp_k,\bx_0\rangle - g(\bp_k)
    = \left\langle \sum_{j\in I_{\bx_0}} c_j\bp_j,\bx_0\right\rangle - g\left(\sum_{j\in I_{\bx_0}} c_j\bp_j\right)\\
    &\geqslant \sum_{j\in I_{\bx_0}} c_j(\langle\bp_j,\bx_0\rangle - g(\bp_j))
    = \sum_{j\in I_{\bx_0}} c_j J(\bx_0) = J(\bx_0).
\end{split}
\end{equation*}
Thus the inequalities become equalities in the equation above. As a result, we have
\begin{equation*}
    \langle \bp_k,\bx_0\rangle - \gamma_k = J(\bx_0) > \langle \bp_i,\bx_0\rangle - \gamma_i,
\end{equation*}
where the inequality holds because $i\not\in I_{\bx_0}\cup\{k\}$ by assumption. This inequality implies that the constant $\langle \bp_k,\bx_0\rangle -\gamma_k - (\langle \bp_i,\bx_0\rangle - \gamma_i)$ is positive, and taken together with (\ref{eqt:proofprop1_Ldiff}), we conclude that the inequality in (\ref{eqt:ineq_relation}) holds for $i\not\in I_{\bx_0}\cup\{k\}$ when $t$ is small enough.

If $i\in I_{\bx_0}$, then both $i$ and $k$ are maximizers in Eq. (\ref{eqt:defJ}) at $\bx_0$, and hence we have
\begin{equation}\label{eqt:proofprop1_5}
\langle \bp_k,\bx_0\rangle - \gamma_k = J(\bx_0) = \langle \bp_i,\bx_0\rangle - \gamma_i.
\end{equation} 
Together with Eq. (\ref{eqt:proofprop1_Ldiff}) and the definition of $h$ in Eq. (\ref{eqt:proofLem4defh}), we obtain
\begin{equation}\label{eqt:pflem53_eqt27}
\begin{split}
    \langle \bp_k,\bx\rangle -t\theta_k - \gamma_k - \left(\langle \bp_i,\bx\rangle -t\theta_i - \gamma_i\right) &= 
    0 + t(h(\bp_i)-\theta_k - \langle \bp_i-\bp_k, \bv_0\rangle)\\
    \ &\geqslant t(\cobar h(\bp_i)-\theta_k - \langle \bp_i-\bp_k, \bv_0\rangle ).
    \end{split}
\end{equation}
In addition, since $\bv_0\in \partial (\cobar h)(\bp_k)$, we have
\begin{equation}\label{eqt:proofprop1_6}
    \cobar h(\bp_i)\geqslant \cobar h(\bp_k) +\langle \bp_i-\bp_k,\bv_0\rangle.
\end{equation}
Combining Eqs. (\ref{eqt:pflem53_eqt27}) and (\ref{eqt:proofprop1_6}), we obtain
\begin{equation}\label{eqt:lem34_eqtLkmLisimple}
    \langle \bp_k,\bx\rangle -t\theta_k - \gamma_k - \left(\langle \bp_i,\bx\rangle -t\theta_i - \gamma_i\right)
    \geqslant t(\cobar h(\bp_k)-\theta_k).
\end{equation}
To prove the result, it suffices to show $\cobar h(\bp_k)>\theta_k$. As $\bp_k \in \cobar h$ (as shown before in Eq. (\ref{eqt:pflem34_bpkinsubdiff})), then according to \cite[Prop. X.1.5.4]{hiriart2013convexII} we have
\begin{equation}\label{eqt:proofprop1_coh}
    \cobar h(\bp_k) = \sum_{j\in I_{\bx_0}} \alpha_j h(\bp_j) = \sum_{j\in I_{\bx_0}} \alpha_j \theta_j,
\end{equation}
for some $(\alpha_1,\dots,\alpha_m)\in \unitsim_m$ satisfying $\bp_k = \sum_{j=1}^m \alpha_j \bp_j$ and $\alpha_j=0$ whenever $j\not\in I_{\bx_0}$. Then, by Lem. \ref{lem:formulaJstar}(ii) $(\alpha_1,\dots,\alpha_m)$ is a minimizer in Eq. (\ref{eqt:proofprop1_Jstar}), that is,
\begin{equation*}
\gamma_k = J^*(\bp_k) =  \sum_{j=1}^m \alpha_j \gamma_j = \sum_{j\in I_{\bx_0}} \alpha_i \gamma_i 
= \sum_{i\neq k}\alpha_i \gamma_i.
\end{equation*}
Hence Eq. (\ref{eqt:assumption3}) holds for the index $k$. By assumption (A3), we have $\theta_k < \sum_{j\neq k}\alpha_j \theta_j$. Taken together with the fact that $\alpha_j=0$ whenever $j\not\in I_{\bx_0}$ and Eq. (\ref{eqt:proofprop1_coh}), we find
\begin{equation}\label{eqt:proofprop1_4}
    \theta_k < \sum_{j\neq k}\alpha_j \theta_j = \sum_{j\in I_{\bx_0}} \alpha_j \theta_j= \cobar h(\bp_k).
\end{equation}
Hence, the right-hand-side of Eq. (\ref{eqt:lem34_eqtLkmLisimple}) is strictly positive, and we conclude that $\langle \bp_k,\bx\rangle -t\theta_k - \gamma_k > \langle \bp_i,\bx\rangle -t\theta_i - \gamma_i$ for $t>0$ if $i\in I_{\bx_0}$.

Therefore, in this case, when $t>0$ is small enough and $\bx$ is chosen as above, we have $\langle \bp_k,\bx\rangle -t\theta_k - \gamma_k > \langle \bp_i,\bx\rangle -t\theta_i - \gamma_i$ for every $i\neq k$, and the proof is complete. 
\end{proof}

\subsection{\revision{Statement and proof of Lemma \ref{lem:formulacoF}}} \label{sec:prooflem35}
\begin{lem}\label{lem:formulacoF}
Suppose the parameters $\{(\bp_i,\theta_i,\gamma_i)\}_{i=1}^{m}\subset \R^n \times \R \times \R$ satisfy assumptions (A1)-(A3). Define a function $F:\ \R^{n+1}\to \R\cup\{+\infty\}$ by
\begin{equation}\label{eqt:lem5defF}
F(\boldsymbol{p},E^{-}) \coloneqq \begin{cases}
J^{*}(\bp), & \text{\rm if \ensuremath{E^{-}+H(\bp)\leqslant0},}\\
+\infty, & {\rm otherwise,}
\end{cases}
\end{equation}
for all $\bp\in\R^n$ and $E^-\in\R$. Then the convex envelope of $F$ is given by
\begin{equation}\label{eqt:lem5coF}
\co F(\bp,E^-) = \inf_{(c_1,\dots, c_m)\in C(\bp,E^-)
}\sum_{i=1}^{m}c_{i}\gamma_{i},
\end{equation}
where the constraint set $C(\bp,E^-)$ is defined by
\begin{equation*}
    C(\bp,E^-)\coloneqq \left\{(c_1, \dots, c_m)\in\unitsim_m\colon
\sum_{i=1}^{m}c_{i}\bp_{i}=\bp, \ 
\sum_{i=1}^{m}c_{i}\theta_{i}\leqslant -E^-
\right\}.
\end{equation*}
\end{lem}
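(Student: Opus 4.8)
The plan is to show that the right-hand side of \eqref{eqt:lem5coF}, which I denote by $G(\bp,E^-)\coloneqq\inf_{(c_1,\dots,c_m)\in C(\bp,E^-)}\sum_{i=1}^m c_i\gamma_i$ (with the convention $G(\bp,E^-)=+\infty$ when $C(\bp,E^-)=\emptyset$), is a convex minorant of $F$ and in fact the largest one, so that $G=\co F$. The only facts needed for the bookkeeping are $J^*(\bp_i)=\gamma_i$ (Lem.~\ref{lem:formulaJstar}(iii)) and $H(\bp_i)=\theta_i$ (Lem.~\ref{lem:Hprop}(iii)) for each $i\in\{1,\dots,m\}$. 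Consequently, for every $E^-\leqslant-\theta_i$ we have $E^-+H(\bp_i)\leqslant0$, hence $(\bp_i,E^-)\in\dom F$ and $F(\bp_i,E^-)=J^*(\bp_i)=\gamma_i$; that is, the point $(\bp_i,-\theta_i)$ and everything below it in the $E^-$-coordinate lies in $\dom F$ and carries $F$-value $\gamma_i$.

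First I would check that $G$ is convex and $G\leqslant F$. Convexity of $G$ is a direct computation: for $(\bp^{(1)},E^{-(1)})$, $(\bp^{(2)},E^{-(2)})$ with finite $G$-values, $\lambda\in[0,1]$, and near-optimal competitors $(c_i^{(1)})\in C(\bp^{(1)},E^{-(1)})$ and $(c_i^{(2)})\in C(\bp^{(2)},E^{-(2)})$, the convex combination $(\lambda c_i^{(1)}+(1-\lambda)c_i^{(2)})_i$ satisfies all three constraints defining $C\big(\lambda\bp^{(1)}+(1-\lambda)\bp^{(2)},\,\lambda E^{-(1)}+(1-\lambda)E^{-(2)}\big)$ — membership in $\unitsim_m$, the $\bp$-barycentre equation, and the inequality $\sum_i c_i\theta_i\leqslant-E^-$ are all stable under convex combinations — so letting the optimality gaps tend to $0$ gives Jensen's inequality for $G$. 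For $G\leqslant F$: if $F(\bp,E^-)<+\infty$ then $\bp\in\dom J^*$ and $E^-+H(\bp)\leqslant0$; by Lem.~\ref{lem:Hprop}(i) the infimum defining $H(\bp)$ in \eqref{eqt:defH} is attained at some $(c_1,\dots,c_m)\in\mathcal{A}(\bp)$, which by definition of $\mathcal{A}(\bp)$ lies in $\unitsim_m$ with $\sum_i c_i\bp_i=\bp$ and $\sum_i c_i\gamma_i=J^*(\bp)$, and additionally $\sum_i c_i\theta_i=H(\bp)\leqslant-E^-$; hence $(c_1,\dots,c_m)\in C(\bp,E^-)$ and $G(\bp,E^-)\leqslant\sum_i c_i\gamma_i=J^*(\bp)=F(\bp,E^-)$. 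Since $\co F$ is by definition the largest convex function dominated by $F$, these two facts give $G\leqslant\co F$.

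It remains to prove $\co F\leqslant G$. Fix $(\bp,E^-)$ with $C(\bp,E^-)\neq\emptyset$ and any $(c_1,\dots,c_m)\in C(\bp,E^-)$, and set $\delta\coloneqq E^-+\sum_{j=1}^m c_j\theta_j$, which is $\leqslant0$ by the definition of $C(\bp,E^-)$. For each index $i$ with $c_i>0$ put $E_i^-\coloneqq-\theta_i+\delta\leqslant-\theta_i$, so that $F(\bp_i,E_i^-)=\gamma_i$ by the observation of the first paragraph; then
\begin{equation*}
\sum_{i:\,c_i>0}c_i\,(\bp_i,E_i^-)=\Big(\sum_i c_i\bp_i,\ -\sum_i c_i\theta_i+\delta\Big)=(\bp,E^-),
\end{equation*}
so $(\bp,E^-)$ is a convex combination of points of $\dom F$. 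Because $\co F$ is convex and $\co F\leqslant F$, Jensen's inequality gives $\co F(\bp,E^-)\leqslant\sum_{i:\,c_i>0}c_i F(\bp_i,E_i^-)=\sum_{i=1}^m c_i\gamma_i$; taking the infimum over $(c_1,\dots,c_m)\in C(\bp,E^-)$ yields $\co F(\bp,E^-)\leqslant G(\bp,E^-)$, which also holds trivially when $C(\bp,E^-)=\emptyset$. Together with the previous paragraph this gives $\co F=G$, i.e.\ \eqref{eqt:lem5coF}.

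The argument is essentially bookkeeping with the defining constraints; the one place that needs care is the last step, where one must ensure that the auxiliary points $(\bp_i,E_i^-)$ genuinely lie in $\dom F$ — this is exactly what $H(\bp_i)=\theta_i$ (Lem.~\ref{lem:Hprop}(iii)) and $J^*(\bp_i)=\gamma_i$ (Lem.~\ref{lem:formulaJstar}(iii)) provide — and where the common shift by the single nonpositive constant $\delta$ in the $E^-$-coordinate is what makes the barycentre land exactly on $E^-$ rather than merely on some value above it. I do not anticipate a deeper obstacle; alternatively one could derive \eqref{eqt:lem5coF} from a general formula for the convex envelope of an infimal-type function as in \cite{hiriart2013convexII}, but the self-contained computation above seems cleaner.
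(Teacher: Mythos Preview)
Your proof is correct. The approach, however, differs from the paper's. The paper works through the epigraph: it computes the convex hull $\co(\epi F)$ explicitly, showing that every element of $\co(\epi F)$ can be rewritten, via a double layer of convex coefficients (first expressing each $\bq_i\in\dom J^*$ through a minimizer $(\alpha_{ij})_j\in\mathcal{A}(\bq_i)$, then combining), as an element of $\co\big(\bigcup_j\{\bp_j\}\times(-\infty,-\theta_j]\times[\gamma_j,+\infty)\big)$; the reverse inclusion is immediate from $J^*(\bp_j)=\gamma_j$, $H(\bp_j)=\theta_j$. It then invokes the formula $\co F(\bp,E^-)=\inf\{r:(\bp,E^-,r)\in\co(\epi F)\}$ from \cite[Prop.~IV.2.5.1]{hiriart2013convexII}.

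You instead use the variational characterization of $\co F$ as the largest convex minorant of $F$. Your direction $G\leqslant\co F$ is shorter than the paper's corresponding epigraph inclusion because you need only one minimizer (for $H(\bp)$) rather than a nested construction. Your direction $\co F\leqslant G$ uses the neat uniform shift $E_i^-=-\theta_i+\delta$ to land the barycentre exactly on $E^-$; this replaces the paper's direct use of the epigraph formula. Your argument is self-contained and avoids the external citation, at the cost of the small verification that $G$ is convex. Both proofs hinge on the same two facts $J^*(\bp_i)=\gamma_i$ and $H(\bp_i)=\theta_i$ and are of comparable length.
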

\begin{proof}
First, we compute the convex hull of $\epi F$, which we denote by $\co (\epi F)$.
Let $(\bp, E^-,r)\in \co(\epi F)$, where $\bp\in\R^n$ and $E^-,r\in\R$.
Then there exist $k\in\N$, $(\beta_1, \dots, \beta_k)\in \unitsim_k$ and $(\bq_i, E_i^-,r_i)\in \epi F$ for each $i\in\{1,\dots, k\}$ such that $(\bp, E^-,r) = \sum_{i=1}^k \beta_i (\bq_i, E_i^-,r_i)$. By definition of $F$ in Eq. (\ref{eqt:lem5defF}), $(\bq_i, E_i^-,r_i)\in \epi F$ holds if and only if $\bq_i\in \dom J^*$, $E_i^-+H(\bq_i)\leqslant 0$ and $r_i\geqslant J^*(\bq_i)$. In conclusion, we have
\begin{equation}\label{eqt:lem5eqts1}
    \begin{cases}
    (\beta_1, \dots, \beta_k)\in \unitsim_k,\\
    (\bp, E^-,r) = \sum_{i=1}^k \beta_i (\bq_i, E_i^-,r_i),\\
    \bq_1,\dots,\bq_k\in \dom J^*,\\
    E_i^-+H(\bq_i)\leqslant 0 \text{ for each }i\in\{1,\dots, k\},\\
    r_i\geqslant J^*(\bq_i)  \text{ for each }i\in\{1,\dots, k\}.
    \end{cases}
\end{equation}
For each $i$, since we have $\bq_i\in \dom J^*$, by Lem. \ref{lem:Hprop}(i) the minimization problem in (\ref{eqt:defH}) evaluated at $\bq_i$ has at least one minimizer. Let $(\alpha_{i1}, \dots, \alpha_{im})$ be such a minimizer. Using Eqs. $(\ref{eqt:defJstar})$,  $(\ref{eqt:defH})$, and $(\alpha_{i1}, \dots, \alpha_{im})\in \unitsim_m$, we have
\begin{equation}\label{eqt:lem5eqts2}
    \sum_{j=1}^m \alpha_{ij}(1, \bp_j, \theta_j, \gamma_j) = (1,\bq_i, H(\bq_i), J^*(\bq_i)).
\end{equation}
Define the real number $c_j\coloneqq \sum_{i=1}^k \beta_i \alpha_{ij}$ for any $j\in\{1,\dots,m\}$. Combining Eqs. (\ref{eqt:lem5eqts1}) and (\ref{eqt:lem5eqts2}), we get that $c_j\geqslant 0$ for any $j$ and
\begin{equation*}
    \begin{split}
        &\sum_{j=1}^m c_j(1,\bp_j, \theta_j, \gamma_j) =\sum_{j=1}^m \sum_{i=1}^k \beta_i \alpha_{ij}(1,\bp_j, \theta_j, \gamma_j)\\
         =\ & \sum_{i=1}^k \beta_i \left(\sum_{j=1}^m\alpha_{ij}(1,\bp_j, \theta_j, \gamma_j) \right)
         = \sum_{i=1}^k \beta_i(1,\bq_i, H(\bq_i), J^*(\bq_i)).
    \end{split}
\end{equation*}
We continue the computation using Eq. (\ref{eqt:lem5eqts1}) and get
\begin{equation*}
\begin{split}
    &\sum_{j=1}^m c_j(1,\bp_j) = \sum_{i=1}^k \beta_i(1,\bq_i) = (1,\bp);\\
    &\sum_{j=1}^m c_j\theta_j=\sum_{i=1}^k \beta_iH(\bq_i)\leqslant -\sum_{i=1}^k\beta_iE_i^- = -E^-; \\
    &\sum_{j=1}^m c_j\gamma_j=\sum_{i=1}^k \beta_iJ^*(\bq_i)\leqslant \sum_{i=1}^k\beta_ir_i=r.\\
\end{split}
\end{equation*}
Therefore, we conclude that $(c_1,\dots, c_m)\in \unitsim_m$ and
\begin{equation*}
    \begin{cases}
    \bp = \sum_{j=1}^m c_j\bp_j,\\
    E^- \leqslant -\sum_{j=1}^m c_j\theta_j,\\
    r \geqslant \sum_{j=1}^m c_j\gamma_j.
    \end{cases}
\end{equation*}
As a consequence, $\co (\epi F)\subseteq \co \left(\cup_{j=1}^m \left(\{\bp_j\}\times (-\infty, -\theta_j]\times [\gamma_j, +\infty)\right)\right)$. Now, Lem. \ref{lem:formulaJstar}(iii) and \ref{lem:Hprop}(iii) imply $\{\bp_j\}\times (-\infty, -\theta_j]\times [\gamma_j, +\infty) \subseteq \epi F$ for each $j\in\{1,\dots,m\}$. Therefore, we have
\begin{equation}\label{eqt:lem5coepiF}
\begin{split}
    \co(\epi F) = \Bigg\{(\bp, E^-,r)\in \R^n\times \R\times \R: \text{there exists }(c_1,\dots, c_m)\in\unitsim_m\text{ s.t. }\quad \quad \\
    \bp = \sum_{j=1}^m c_j\bp_j,\ 
    E^- \leqslant -\sum_{j=1}^m c_j\theta_j,
    \ r \geqslant \sum_{j=1}^m c_j\gamma_j.\Bigg\}.
\end{split}
\end{equation}
By \cite[Def. IV.2.5.3 and Prop. IV.2.5.1]{hiriart2013convexII}, we have 
\begin{equation}\label{eqt:lem5coF2}
\begin{split}
\co F(\bp,E^-) = \inf\{r\in \R: (\bp, E^-,r)\in \co(\epi F)\}.
\end{split}
\end{equation}
The conclusion then follows from Eqs. (\ref{eqt:lem5coepiF}) and (\ref{eqt:lem5coF2}).
\end{proof}

\subsection{\revision{Proof of Theorem~\ref{thm:constructHJ}}} \label{subsec:pf_constructHJ}
Proof of (i): First, the neural network $f$ is the pointwise maximum of $m$ affine functions in $(\bx,t)$ and therefore is jointly convex in these variables. Second, as the function $H$ is continuous and bounded in $\dom J^*$ by Lem. \ref{lem:Hprop}(ii), there exists a continuous and bounded function defined in $\R^n$ whose restriction to $\dom J^*$ coincides with $H$ \cite[Thm. 4.16]{folland2013real}. Then statement (i) follows by substituting this function for $\tilde{H}$ in statement (ii), and so it suffices to prove the latter.

Proof of (ii) (sufficiency): Suppose $\tilde{H}(\bp_i) = H(\bp_i)$ for every $i\in\{1,\dots,m\}$ and $\tilde{H}(\bp)\geqslant H(\bp)$ for every $\bp\in \dom J^*$. Since $\tilde{H}$ is continuous on $\R^n$ and $J$ is convex and Lipschitz continuous with Lipschitz constant $L= \max_{i\in\{1,\dots,m\}} \|\bp_i\|$,  \cite[Thm. 3.1]{bardi1984hopf} implies that $(\bx,t) \mapsto \sup_{\bp\in \dom J^*}\left\{\langle \bp,\bx\rangle - t\tilde{H}(\bp) - J^*(\bp)\right\}$ is the unique uniformly continuous viscosity solution to the HJ equation (\ref{eqt:HJ}). But this function is equivalent to the neural network $f$ by Lem. \ref{lem:equivSHandf}, and therefore both sufficiency and statement (i) follow.

Proof of (ii) (necessity): Suppose the neural network $f$ is the unique uniformly continuous viscosity solution to (\ref{eqt:HJ}). First, we prove that $\tilde{H}(\bp_k) = H(\bp_k)$ for every $k\in\{1,\dots, m\}$. Fix $k\in\{1,\dots, m\}$. By Lem. \ref{lem:existdiffpt}, there exist $\bx\in\R^n$ and $t>0$ satisfying $\partial_{\bx} f(\bx,t) = \{\bp_k\}$. Use Lems. \ref{lem:formulaJstar}(iii) and \ref{lem:Hprop}(iii) to write the maximization problem in Eq. (\ref{eqt:deff}) as
\begin{equation}\label{eqt:proofprop1_7}
    f(\bx,t) = \max_{\bp \in \{\bp_1,\dots,\bp_m\}} \{\langle \bp, \bx\rangle - tH(\bp)-J^*(\bp)\},
\end{equation}
where $(\bp,t) \mapsto \langle \bp, \bx\rangle - tH(\bp)-J^*(\bp)$ is continuous in $(\bp,t)$ and differentiable in $t$. As the feasible set $\{\bp_1,\dots,\bp_m\}$ is compact, $f$ is also differentiable with respect to $t$ \cite[Prop. 4.12]{bonnans2013perturbation}, and its derivative equals
\begin{equation*}
    \frac{\partial f}{\partial t}(\bx,t) = \min\left\{-H(\bp):\ \bp \text{ is a maximizer in Eq. (\ref{eqt:proofprop1_7})}\right\}.
\end{equation*}
Since $\bx$ and $t$ satisfy $\partial_{\bx} f(\bx,t)=\{\bp_k\}$, \cite[Thm. VI.4.4.2]{hiriart2013convexI} implies that the only maximizer in Eq. (\ref{eqt:proofprop1_7}) is $\bp_k$. As a result, there holds
\begin{equation} \label{eqt:grad_t_f}
    \frac{\partial f}{\partial t}(\bx,t) = -H(\bp_k).
\end{equation}
Since $f$ is convex on $\R^n$, its subdifferential $\partial f(\bx,t)$ is non-empty and satisfies
\begin{equation*}
 \partial f(\bx,t)\subseteq \partial_{\bx} f(\bx,t)\times \partial_t f(\bx,t) = \{(\bp_k, -H(\bp_k))\}.   
\end{equation*}
In other words, the subdifferential $\partial f(\bx,t)$ contains only one element, and therefore $f$ is differentiable at $(\bx,t)$ and its gradient equals $(\bp_k, -H(\bp_k))$ \cite[Thm. 21.5]{rockafellar1970convex}. Using (\ref{eqt:HJ}) and (\ref{eqt:grad_t_f}), we obtain
\begin{equation*}
    0 = \frac{\partial f}{\partial t}(\bx,t) + \tilde{H}(\nabla_{\bx} f(\bx,t)) = -H(\bp_k) + \tilde{H}(\bp_k).
\end{equation*}
As $k\in\{1,\dots, m\}$ is arbitrary, we find that $H(\bp_k)=\tilde{H}(\bp_k)$ for every $k\in\{1,\dots, m\}$.

Next, we prove by contradiction that $\tilde{H}(\bp)\geqslant H(\bp)$ for every $\bp\in \dom J^*$. It is enough to prove the property only for every $\bp\in \ri \dom J^*$ by continuity of both $\tilde{H}$ and $H$ (where continuity of $H$ is proved in Lem. \ref{lem:Hprop}(ii)). Assume $\tilde{H}(\bp) < H(\bp)$ for some $\bp\in \ri \dom J^*$. Define two functions $F$ and $\tilde{F}$ from $\R^n\times \R$ to $\R\cup\{+\infty\}$ by
\begin{equation}\label{eqt:defF}
F(\boldsymbol{q},E^{-})\coloneqq\begin{cases}
J^{*}(\boldsymbol{q}), & \text{if \ensuremath{E^{-}+H(\boldsymbol{q})\leqslant0},}\\
+\infty, & {\rm otherwise.}
\end{cases}
\quad \text{ and }\quad 
\tilde{F}(\boldsymbol{q},E^{-})\coloneqq\begin{cases}
J^{*}(\boldsymbol{q}), & \text{if \ensuremath{E^{-}+\tilde{H}(\boldsymbol{q})\leqslant0},}\\
+\infty, & {\rm otherwise.}
\end{cases}
\end{equation}
for any $\bq\in\R^n$ and $E^-\in\R$. 
Denoting the convex envelope of $F$ by $\co F$, Lem. \ref{lem:formulacoF} implies
\begin{equation}\label{eqt:defGinprop1}
\begin{split}
    &\co F(\bq,E^-) = \inf_{(c_1,\dots, c_m)\in C(\bq,E^-)
}\sum_{i=1}^{m}c_{i}\gamma_{i},\text{ where }C \text{ is defined by}\\
    &C(\bq,E^-)\coloneqq \left\{(c_1, \dots, c_m)\in\unitsim_m:\ 
\sum_{i=1}^{m}c_{i}\bp_{i}=\bq, \ 
\sum_{i=1}^{m}c_{i}\theta_{i}\leqslant -E^-
\right\}.
\end{split}
\end{equation}
Let $E_1^- \in \left(-H(\bp), -\tilde{H}(\bp)\right)$.
Now, we want to prove that $\co F(\bp, E_1^-)\leqslant J^*(\bp)$; this inequality will lead to a contradiction with the definition of $H$.

Using statement (i) of this theorem and the supposition that $f$ is the unique viscosity solution to the HJ equation (\ref{eqt:HJ}), we have that
\begin{equation*}
    f(\bx, t) = \sup_{\bq\in\R^n} \{\langle \bq,\bx\rangle - tH(\bq) - J^*(\bq)\} = \sup_{\bq\in\R^n} \{\langle \bq,\bx\rangle - t\tilde{H}(\bq) - J^*(\bq)\}. 
\end{equation*}
Furthermore, a similar calculation as in the proof of \cite[Prop. 3.1]{darbon2019decomposition} yields
\begin{equation*}
     f = F^* = \tilde{F}^*, \text{ which implies } f^* =  \cobar F = \cobar \tilde{F}.
\end{equation*}
where $\cobar F$ and $\cobar \tilde{F}$ denotes the convex lower semicontinuous envelopes of $F$ and $\tilde{F}$, respectively. On the one hand, since $f^* = \cobar \tilde{F}$, the definition of $\tilde{F}$ in Eq. (\ref{eqt:defF}) implies
\begin{equation}\label{eqt:pfthm31_Ftineqt1}
    f^*\left(\bp, -\tilde{H}(\bp)\right)\leqslant \tilde{F}\left(\bp, -\tilde{H}(\bp)\right) = J^*(\bp)
    \quad
    \text{ and }
    \quad 
    \{\bp\}\times \left(-\infty, -\tilde{H}(\bp)\right]\subseteq \dom \tilde{F} \subseteq \dom f^*.    
\end{equation}
Recall that $\bp\in\ri\dom J^*$ and $E_1^-<-\tilde{H}(\bp)$, so that $(\bp, E_1^-) \in \ri \dom f^*$. As a result, we get
\begin{equation}\label{eqt:pfthm31_rifstar}
    \left(\bp, \alpha E_1^- +(1-\alpha)(-\tilde{H}(\bp))\right)\in \ri \dom f^* \text{ for all }\alpha\in(0,1).
\end{equation} On the other hand, since $f^* = \co F$, we have $\ri\dom f^* = \ri\dom (\co F)$ and $f^* = \co F$ in $\ri\dom f^*$. Taken together with Eq. (\ref{eqt:pfthm31_rifstar}) and the continuity of $f^*$, there holds
\begin{equation}\label{eqt:pfthm31_ineqtfstar2}
\begin{split}
    f^*\left(\bp, -\tilde{H}(\bp)\right) &= \lim_{\substack{\alpha \to 0\\ 0<\alpha <1}} f^*\left(\bp, \alpha E_1^- +(1-\alpha)(-\tilde{H}(\bp))\right)\\
    &= \lim_{\substack{\alpha \to 0\\ 0<\alpha <1}} \co F\left(\bp, \alpha E_1^- +(1-\alpha)(-\tilde{H}(\bp))\right).
\end{split}
\end{equation}
Note that $\co F(\bp,\cdot)$ is monotone non-decreasing. Indeed, if $E_2^-$ is a real number such that $E_2^- > E_1^-$, by the definition of the set $C$ in Eq. (\ref{eqt:defGinprop1}) there holds $C(\bp, E_2^-)\subseteq C(\bp, E_1^-)$, which implies 
$\co F(\bp,E_2^-)\geqslant \co F(\bp,E_1^-)$.
Recalling that $E_1^- < -\tilde{H}(\bp)$, monotonicity of $\co F(\bp,\cdot)$ and Eq. (\ref{eqt:pfthm31_ineqtfstar2}) imply
\begin{equation}\label{eqt:pfthm31_ineqtfstar3}
\begin{split}
    f^*\left(\bp, -\tilde{H}(\bp)\right) 
    &\geqslant \lim_{\substack{\alpha \to 0\\ 0<\alpha <1}} \co F\left(\bp, \alpha E_1^- +(1-\alpha)E_1^-\right) 
    = \co F(\bp, E_1^-).
\end{split}
\end{equation}
Combining Eqs. (\ref{eqt:pfthm31_Ftineqt1}) and (\ref{eqt:pfthm31_ineqtfstar3}), we get 
\begin{equation}\label{eqt:proofprop1_1}
\co F(\bp, E_1^-)\leqslant J^*(\bp) < +\infty.   
\end{equation}

As a result, the set $C(\bp, E_1^-)$ is non-empty. Since it is also compact, there exists a minimizer in Eq. (\ref{eqt:defGinprop1}) evaluated at the point $(\bp, E_1^-)$. Let $(c_1,\dots, c_m)$ be such a minimizer. By Eqs. (\ref{eqt:defGinprop1}) and (\ref{eqt:proofprop1_1}) and the assumption that $E_1^- \in \left(-H(\bp),-\tilde{H}(\bp)\right)$, there holds
\begin{equation}\label{eqt:proofprop1eqts}
    \begin{cases}
    (c_1, \dots, c_m)\in\unitsim_m,\\
    \sum_{i=1}^{m}c_{i}\bp_{i}=\bp,\\
    \sum_{i=1}^m c_i\gamma_i = \co F(\bp,E_1^-) \leqslant J^*(\bp),\\
    \sum_{i=1}^{m}c_{i}\theta_{i}\leqslant -E_1^- < H(\bp).
    \end{cases}
\end{equation}
Comparing the first three statements in Eq. (\ref{eqt:proofprop1eqts}) and the formula of $J^*$ in Eq. (\ref{eqt:defJstar}), we deduce that $(c_1,\dots, c_m)$ is a minimizer in Eq. (\ref{eqt:defJstar}), i.e., $(c_1,\dots, c_m)\in\mathcal{A}(\bp)$. By definition of $H$ in Eq. (\ref{eqt:defH}), we have
\begin{equation*}
    H(\bp) =\inf_{\bm{\alpha}\in \mathcal{A}(\bp)} \sum_{i=1}^m \alpha_i \theta_i \leqslant \sum_{i=1}^m c_i\theta_i,
\end{equation*}
which contradicts the last inequality in Eq. (\ref{eqt:proofprop1eqts}). Therefore, we conclude that $\tilde{H}(\bp)\geqslant H(\bp)$ for any $\bp\in\ri \dom J^*$ and the proof is finished.

\section{\revision{Connections between the neural network \eqref{eq:log-exponential_network} and the viscous HJ PDE \eqref{eq:hj_visc_pde_sum}}}
\label{sec:pf_thmvisc}

\revision{Let $f_\epsilon$ be the neural network defined by Eq.~\eqref{eq:log-exponential_network} with parameters $\{(\bp_{i}, \theta_i, \gamma_{i})\}_{i=1}^m$ and $\epsilon > 0$, which is illustrated in Fig.~\ref{fig: log-exponential_network_figure}. We will show in this appendix that when the parameter $\theta_{i} = -\frac{1}{2}\left\Vert \bp_{i}\right\Vert _{2}^{2}$ for $i\in\{1,\dots,m\}$, then the neural network $f_\epsilon$ corresponds to the unique, jointly convex smooth solution to the viscous HJ PDE~\eqref{eq:hj_visc_pde_sum}. This result will follow immediately from the following lemma.
\begin{lem}
\label{lem:visc}
Let $\{(\bp_i,\gamma_i)\}_{i=1}^{m}\subset \R^n \times \R$ and $\epsilon > 0$. Then the function $w_{\epsilon} : \R^n \mapsto \R$ defined by
\begin{equation} \label{eq:sum_heat_eq}
w_{\epsilon}(\bx,t)\coloneqq\sum_{i=1}^{m}e^{\left(\left\langle \bp_{i},\bx\right\rangle +\frac{t}{2}\left\Vert \bp_{i}\right\Vert _{2}^{2}-\gamma_{i}\right)/\epsilon}
\end{equation}
is the unique, jointly log-convex and smooth solution to the Cauchy problem
\begin{equation} \label{eq:heat_problem}
\begin{dcases}
\frac{\partial w_{\epsilon}}{\partial t}(\boldsymbol{x},t)=\frac{\epsilon}{2}\Delta_{\boldsymbol{x}}w_{\epsilon}(\boldsymbol{x},t) & \text{\rm in } \mathbb{R}^{n}\times(0,+\infty),\\
w_{\epsilon}(\boldsymbol{x},0)=\sum_{i=1}^{m}e^{\left(\left\langle \bp_{i},\bx\right\rangle -\gamma_{i}\right)/\epsilon} & \text{\rm in }\mathbb{R}^{n}. 
\end{dcases}
\end{equation}
\end{lem}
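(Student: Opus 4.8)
The plan is to verify directly that $w_\epsilon$ possesses the three asserted properties (solving the heat equation, joint log-convexity, smoothness) and then to obtain uniqueness from the classical theory of nonnegative solutions of the heat equation.

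First I would check that $w_\epsilon$ solves the Cauchy problem \eqref{eq:heat_problem}. Since $w_\epsilon$ is a finite sum of exponentials of affine functions, it is $C^\infty$ on $\R^n\times(0,+\infty)$ and termwise differentiation is legitimate. Writing $g_i(\bx,t)\coloneqq e^{\left(\left\langle \bp_i,\bx\right\rangle+\tfrac{t}{2}\|\bp_i\|_2^2-\gamma_i\right)/\epsilon}$, one computes $\partial_t g_i=\tfrac{1}{2\epsilon}\|\bp_i\|_2^2\,g_i$, $\nabla_{\bx}g_i=\tfrac{1}{\epsilon}\bp_i\,g_i$, and hence $\Delta_{\bx}g_i=\tfrac{1}{\epsilon^2}\|\bp_i\|_2^2\,g_i$, so that $\partial_t g_i=\tfrac{\epsilon}{2}\Delta_{\bx}g_i$ for each $i$; summing over $i$ yields the PDE, and setting $t=0$ recovers the prescribed initial data, which $w_\epsilon$ attains continuously.

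Next I would establish joint log-convexity. The exponent $\tfrac{1}{\epsilon}\left(\left\langle\bp_i,\bx\right\rangle+\tfrac{t}{2}\|\bp_i\|_2^2-\gamma_i\right)$ is affine in $(\bx,t)$, so $\log w_\epsilon$ is the composition of the log-sum-exp function $(y_1,\dots,y_m)\mapsto\log\sum_{i=1}^m e^{y_i}$, which is convex, with an affine map $\R^{n+1}\to\R^m$; convexity is preserved under such compositions, so $\log w_\epsilon$ is convex, i.e. $w_\epsilon$ is jointly log-convex. In particular $\log w_\epsilon$ is finite-valued, hence $w_\epsilon>0$ on $\R^n\times(0,+\infty)$.

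For uniqueness I would argue that any jointly log-convex smooth solution of \eqref{eq:heat_problem} is, in particular, a nonnegative solution; after the harmless time rescaling $\tau=\tfrac{\epsilon}{2}t$, which turns $\partial_t w=\tfrac{\epsilon}{2}\Delta_{\bx}w$ into the standard heat equation $\partial_\tau w=\Delta_{\bx}w$, the classical theorem of Widder on nonnegative solutions of the heat equation shows that such a solution is uniquely determined by its (continuous) initial data $\sum_{i=1}^m e^{\left(\left\langle\bp_i,\bx\right\rangle-\gamma_i\right)/\epsilon}$. Since $w_\epsilon$ is such a solution, it is the only one, and a fortiori the only jointly log-convex smooth solution. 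I expect this last step to be the main obstacle: the heat equation exhibits Tychonoff-type non-uniqueness, so the positivity (equivalently, the log-convexity) hypothesis is genuinely needed, and I would invoke Widder's theorem rather than a growth estimate, since joint log-convexity alone does not control the growth of a competing solution.
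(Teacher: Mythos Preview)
Your proposal is correct and follows essentially the same route as the paper: a direct verification that $w_\epsilon$ satisfies the heat equation and the initial condition, uniqueness via Widder's theorem for nonnegative solutions, and then a separate check of joint log-convexity. The only minor difference is that for log-convexity the paper applies H\"older's inequality directly to the sum of exponentials, whereas you invoke the convexity of the log-sum-exp function composed with an affine map; the two arguments are equivalent in content.
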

\begin{proof}
A short calculation shows that the function $w_\epsilon$ defined in Eq. (\ref{eq:sum_heat_eq}) solves the Cauchy problem (\ref{eq:heat_problem}), and uniqueness holds by strict positiveness of the initial data (see \cite[Chap. VIII, Thm. 2.2]{widder1976heat}, and note that the uniqueness result can easily be generalized to $n > 1$). 

Now, let $\lambda\in[0,1]$ and $(\bx_{1},t_{1})$ and $(\bx_{2},t_{2})$
be such that $\bx=\lambda\bx_{1}+(1-\lambda)\bx_{2}$ and $t=\lambda t_{1}+(1-\lambda)t_{2}$.
Then the H{\"o}lder's inequality (see, e.g., \cite[Thm. 6.2]{folland2013real}) implies
\begin{alignat*}{1}
\sum_{i=1}^{m}e^{\left(\left\langle \bp_{i},\bx\right\rangle +\frac{t}{2}\left\Vert \bp_{i}\right\Vert _{2}^{2}-\gamma_{i}\right)/\epsilon}  = & \sum_{i=1}^{m}\left(e^{\lambda\left(\left\langle \bp_{i},\bx_{1}\right\rangle +\frac{t_{1}}{2}\left\Vert \bp_{i}\right\Vert _{2}^{2}-\gamma_{i}\right)/\epsilon}e^{(1-\lambda)\left(\left\langle \bp_{i},\bx_{2}\right\rangle +\frac{t_{2}}{2}\left\Vert \bp_{i}\right\Vert _{2}^{2}-\gamma_{i}\right)/\epsilon}\right)\\
\leqslant & \left(\sum_{i=1}^{m}e^{\left(\left\langle \bp_{i},\bx_{1}\right\rangle +\frac{t_{1}}{2}\left\Vert \bp_{i}\right\Vert _{2}^{2}-\gamma_{i}\right)/\epsilon}\right)^{\lambda}\left(\sum_{i=1}^{m}e^{\left(\left\langle \bp_{i},\bx_{2}\right\rangle +\frac{t_{2}}{2}\left\Vert \bp_{i}\right\Vert _{2}^{2}-\gamma_{i}\right)/\epsilon}\right)^{1-\lambda},
\end{alignat*}
and we find $w_{\epsilon}(\bx,t)\leqslant\left(w_{\epsilon}(\bx_{1},t_{1})\right)^{\lambda}\left(w_{\epsilon}(\bx_{2},t_{2})\right)^{1-\lambda}$, which implies that $w_\epsilon$ is jointly log-convex in $(\bx,t)$.
\end{proof}
Thanks to Lemma \ref{lem:visc} and the Cole--Hopf transformation $f_\epsilon(\bx,t) = \epsilon\log\left(w_\epsilon(\bx,t)\right)$ (see, e.g., \cite{evans1998partial}, Sect. 4.4.1), a short calculation immediately implies that the neural network $f_\epsilon$ solves the viscous HJ PDE \eqref{eq:hj_visc_pde_sum}, and it is also its unique solution because $w_{\epsilon}$ is the unique solution to the Cauchy problem \eqref{eq:heat_problem}. Joint convexity in $(\bx,t)$ follows from log-convexity of $(\bx,t)\mapsto w_{\epsilon}(\bx,t)$ for every $\epsilon>0$.}

\section{Proof of Proposition \ref{thm:conservation}} \label{sec:pf_thmconservation}
To prove this proposition, we will use three lemmas whose statements and proofs are given in Sect. \ref{sec:prooflem51}, \ref{sec:prooflem52}, and \ref{sec:prooflem53}, respectively. The proof of Prop. \ref{thm:conservation} is given in Sect. \ref{subsec:pf_thmconservation}.

\subsection{Statement and proof of Lemma \ref{lem:53}}\label{sec:prooflem51}
\begin{lem}\label{lem:53}
Consider the one-dimensional case, i.e., $n=1$. Let $p_1,\dots,p_m\in\R$ satisfy $p_1<\dots<p_m$, and define the function $J$ using Eq. (\ref{eqt:defJ}). Suppose assumptions (A1)-(A2) hold. Let $x\in\R$, $p\in \partial J(x)$, and suppose $p\neq p_i$ for any $i\in\{1,\dots,m\}$. Then there exists $k\in\{1,\dots,m\}$ such that $p_k < p < p_{k+1}$ and
\begin{equation}\label{eqt:lem53_conclu}
    k,k+1 \in \argmax_{i\in\{1,\dots,m\}}\{xp_{i} -\gamma_{i}\}.
\end{equation}
\end{lem}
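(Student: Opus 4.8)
The plan is to combine the standard formula for the subdifferential of a pointwise maximum of affine functions with assumption (A2) to locate exactly which neurons are active at $x$.

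First I would recall that $J$ defined by \eqref{eqt:defJ} is a finite convex piecewise-affine function on $\R$, so that by \cite[Thm. VI.4.4.2]{hiriart2013convexI} its subdifferential at $x$ is $\partial J(x) = \co\{p_i : i \in I_x\}$, where $I_x := \argmax_{i\in\{1,\dots,m\}}\{xp_i - \gamma_i\}$. Since the slopes are ordered $p_1 < \dots < p_m$, this convex hull is the closed interval $[p_a, p_b]$ with $a := \min I_x$ and $b := \max I_x$. Because $p \in \partial J(x)$ but $p \neq p_i$ for every $i$, the point $p$ is not an endpoint, hence lies strictly between $p_a$ and $p_b$; in particular $a < b$, so $I_x$ has at least two elements and $p_a < p < p_b$.

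Next I would set $k := \max\{i \in I_x : p_i < p\}$ and $l := \min\{i \in I_x : p_i > p\}$, both well defined by the previous step, with $k < l$, $p_k < p < p_l$, and (using the ordering of the $p_i$) no index of $I_x$ lying strictly between $k$ and $l$. The core of the argument is to show $l = k+1$. Suppose an integer $j$ satisfies $k < j < l$; then $j \notin I_x$, i.e. $xp_j - \gamma_j < J(x)$. On the other hand $p_k < p_j < p_l$, so $p_j = \lambda p_k + (1-\lambda)p_l$ for some $\lambda \in (0,1)$, and assumption (A2) provides a convex $g$ with $g(p_i) = \gamma_i$ for all $i$, whence $\gamma_j = g(p_j) \leqslant \lambda \gamma_k + (1-\lambda)\gamma_l$. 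Substituting this bound gives $xp_j - \gamma_j \geqslant \lambda(xp_k - \gamma_k) + (1-\lambda)(xp_l - \gamma_l) = J(x)$, using $\ell_k(x) = \ell_l(x) = J(x)$ since $k,l \in I_x$; this contradicts $j \notin I_x$. Hence no such $j$ exists, so $l = k+1$, and then $k, k+1 \in I_x$ with $p_k < p < p_{k+1}$, which is exactly \eqref{eqt:lem53_conclu}.

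I expect the only delicate point is the convexity step: recognizing that assumption (A2), read in one dimension, forces each vertex $(p_j,\gamma_j)$ with intermediate slope to lie weakly below the chord through $(p_k,\gamma_k)$ and $(p_l,\gamma_l)$, which is precisely what rules out a "skipped" active index between the two neurons straddling $p$. The remainder is bookkeeping with the ordering of the slopes and the standard subdifferential formula, and requires no assumption beyond (A1)--(A2).
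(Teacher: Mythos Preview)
Your proof is correct and follows essentially the same approach as the paper: both identify indices in $I_x$ straddling $p$ via the subdifferential formula $\partial J(x) = \co\{p_i : i \in I_x\}$, and both use assumption (A2) together with Jensen's inequality to force any intermediate $p_j$ to also achieve the maximum. The only cosmetic difference is that the paper first picks the consecutive pair $k, k+1$ with $p_k < p < p_{k+1}$ and then shows both lie in $I_x$, whereas you pick the nearest indices in $I_x$ on either side of $p$ and show by contradiction that they must be consecutive.
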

\begin{proof}
Let $I_x$ denotes the set of maximizers in Eq. (\ref{eqt:defIx}) at $x$. Since $p\in \partial J(x)$, $p \neq p_i$ for $i \in \{1,\dots,m\}$, and $\partial J(x) = \co \{p_i: i\in I_i\}$ by \cite[Thm. VI.4.4.2]{hiriart2013convexI}, there exist $j,l\in I_x$ such that $p_j < p < p_l$. Moreover, there exists $k$ with $j\leqslant k<k+1 \leqslant l$ such that $p_j \leqslant p_k < p <p_{k+1}\leqslant p_l$. We will show that $k,k+1 \in I_x$. We only prove $k\in I_x$; the case for $k+1$ is similar.

If $p_j=p_k$, then $k=j\in I_x$ and the conclusion follows directly. Hence suppose $p_j < p_k < p_l$. Then there exists $\alpha\in (0,1)$ such that $p_k = \alpha p_j + (1-\alpha) p_l$. Using that $j,l\in I_x$, assumption (A2), and Jensen inequality, we get
\begin{equation*}
    \begin{split}
        xp_k - \gamma_k &= xp_k - g(p_k) 
        = (\alpha p_j + (1-\alpha)p_l)x - g(\alpha p_j + (1-\alpha)p_l)\\
        &\geqslant \alpha xp_j + (1-\alpha)xp_l - \alpha g(p_j) - (1-\alpha)g(p_l)\\
        &= \alpha(xp_j - \gamma_j) + (1-\alpha)(xp_l - \gamma_l)\\
        &= \max_{i\in\{1,\dots,m\}}\{xp_{i} -\gamma_{i}\},
    \end{split}
\end{equation*}
which implies that $k \in I_x$. A similar argument shows that $k+1\in I_x$, which completes the proof.
\end{proof}

\subsection{Statement and proof of Lemma \ref{lem:52}} \label{sec:prooflem52}
\begin{lem} \label{lem:52}
Consider the one-dimensional case, i.e., $n=1$. Let $p_1,\dots,p_m\in\R$ satisfy $p_1<\dots<p_m$, and define the function $H$ using Eq. (\ref{eqt:defH}). Suppose assumptions (A1)-(A3) hold. Let $\unon \in \R$ and $p_k < \unon < p_{k+1}$ for some index $k$. Then there holds
\begin{equation}\label{eqt:pfcons_Hform}
    H(\unon) = \beta_k\theta_k + \beta_{k+1} \theta_{k+1},
\end{equation}
where 
\begin{equation}\label{eqt:lem52_defbetak}
    \beta_k\coloneqq \frac{p_{k+1} - \unon}{p_{k+1}-p_k} \quad \text{ and } \quad
    \beta_{k+1} \coloneqq \frac{\unon - p_k}{p_{k+1}-p_k}.
\end{equation}
\end{lem}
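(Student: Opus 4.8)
The plan is to exhibit the explicit candidate minimizer $\beta=(\beta_1,\dots,\beta_m)\in\unitsim_m$ obtained by extending the pair $(\beta_k,\beta_{k+1})$ of \eqref{eqt:lem52_defbetak} by $\beta_j=0$ for $j\notin\{k,k+1\}$, and to show that it both lies in the minimizer set $\mathcal{A}(\unon)$ of \eqref{eqt:defsetA} and attains the infimum in \eqref{eqt:defH}. It is convenient to introduce the affine functions $\ell,\psi\colon\R\to\R$ determined by $\ell(p_k)=\gamma_k$, $\ell(p_{k+1})=\gamma_{k+1}$ and $\psi(p_k)=\theta_k$, $\psi(p_{k+1})=\theta_{k+1}$. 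Since $\beta_k,\beta_{k+1}$ are precisely the barycentric coordinates of $\unon$ with respect to $p_k,p_{k+1}$, we have $\beta_k+\beta_{k+1}=1$, $\beta_kp_k+\beta_{k+1}p_{k+1}=\unon$, $\ell(\unon)=\beta_k\gamma_k+\beta_{k+1}\gamma_{k+1}$ and $\psi(\unon)=\beta_k\theta_k+\beta_{k+1}\theta_{k+1}$, so the target \eqref{eqt:pfcons_Hform} is exactly $H(\unon)=\psi(\unon)$.

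First I would establish the auxiliary fact that $\gamma_i\geqslant\ell(p_i)$ for every $i\in\{1,\dots,m\}$, with equality for $i\in\{k,k+1\}$. This uses $\gamma_i=J^*(p_i)$ (Lemma \ref{lem:formulaJstar}(iii)) together with convexity of $J^*$ (Lemma \ref{lem:formulaJstar}(i)): for an index $i$ with $p_i<p_k$ one writes $p_k$ as a strict convex combination of $p_i$ and $p_{k+1}$, applies Jensen's inequality to $J^*$, and cancels the common terms using that $\ell$ is affine and agrees with $J^*$ at $p_k,p_{k+1}$; the case $p_i>p_{k+1}$ is symmetric, and no $p_i$ lies strictly between $p_k$ and $p_{k+1}$ since $p_1<\dots<p_m$. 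Consequently, for any $(\alpha_1,\dots,\alpha_m)\in\unitsim_m$ with $\sum_i\alpha_ip_i=\unon$ we get $\sum_i\alpha_i\gamma_i\geqslant\sum_i\alpha_i\ell(p_i)=\ell(\unon)$, while $\beta$ attains $\sum_i\beta_i\gamma_i=\ell(\unon)$; hence $\beta\in\mathcal{A}(\unon)$, $J^*(\unon)=\ell(\unon)$, and in particular $H(\unon)\leqslant\sum_i\beta_i\theta_i=\beta_k\theta_k+\beta_{k+1}\theta_{k+1}$.

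For the reverse inequality, let $\alpha$ be any element of $\mathcal{A}(\unon)$. From $\sum_i\alpha_i\gamma_i=J^*(\unon)=\ell(\unon)=\sum_i\alpha_i\ell(p_i)$ and the fact that each summand $\alpha_i(\gamma_i-\ell(p_i))$ is nonnegative, we deduce that $\alpha$ is supported on $\{i:\gamma_i=\ell(p_i)\}$. I then claim that $\gamma_i=\ell(p_i)$ implies $\theta_i\geqslant\psi(p_i)$, with equality for $i\in\{k,k+1\}$ — this is where assumption (A3) enters. Given such an $i$ with $p_i<p_k$, take the unique $(\alpha'_1,\dots,\alpha'_m)$ supported on $\{i,k+1\}$ with $\alpha'_i+\alpha'_{k+1}=1$ and $\alpha'_ip_i+\alpha'_{k+1}p_{k+1}=p_k$; then $\alpha'\in\unitsim_m$, $\alpha'_k=0$, $\alpha'_i>0$, and, using $\gamma_i=\ell(p_i)$, $\gamma_{k+1}=\ell(p_{k+1})$ and affineness of $\ell$, one checks $\sum_{j\neq k}\alpha'_j\gamma_j=\ell(p_k)=\gamma_k$, so \eqref{eqt:assumption3} holds at index $k$; (A3) then yields $\alpha'_i\theta_i+\alpha'_{k+1}\theta_{k+1}>\theta_k=\alpha'_i\psi(p_i)+\alpha'_{k+1}\psi(p_{k+1})$, and cancelling the $k+1$ term gives $\theta_i>\psi(p_i)$. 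The case $p_i>p_{k+1}$ is symmetric, using (A3) at index $k+1$ with a combination supported on $\{k,i\}$. Since $\alpha$ is supported on $\{i:\gamma_i=\ell(p_i)\}$, we conclude $\sum_i\alpha_i\theta_i\geqslant\sum_i\alpha_i\psi(p_i)=\psi(\unon)=\beta_k\theta_k+\beta_{k+1}\theta_{k+1}$; taking the infimum over $\mathcal{A}(\unon)$ in \eqref{eqt:defH} gives $H(\unon)\geqslant\beta_k\theta_k+\beta_{k+1}\theta_{k+1}$, which together with the previous paragraph proves \eqref{eqt:pfcons_Hform}.

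The hard part will be the step invoking (A3): one must pick the right auxiliary convex combination, verify all three constraints in \eqref{eqt:assumption3} (the constraint $\sum_{i\neq j}\alpha_i\gamma_i=\gamma_j$ being the delicate one, which is why the preliminary inequality $\gamma_i\geqslant\ell(p_i)$ — amounting to $J^*$ being affine on $[p_k,p_{k+1}]$ — has to be proved first), and then convert the strict inequality produced by (A3) into the pointwise comparison $\theta_i>\psi(p_i)$, i.e.\ into the statement that the points $(p_i,\theta_i)$ lie above the chord through $(p_k,\theta_k)$ and $(p_{k+1},\theta_{k+1})$ over the range where $J^*$ is affine. Everything else is barycentric bookkeeping. (An alternative to the support computation above is to pick, via Lemma \ref{lem:formulaJstar}(i), a point $x$ with $\unon\in\partial J(x)$, invoke Lemma \ref{lem:53} to see that $p_k$ and $p_{k+1}$ are among the maximizers of $\{xp_i-\gamma_i\}$, and use Lemma \ref{lem:formulaJstar}(ii) to localize the support of $\mathcal{A}(\unon)$; the version above avoids this detour.)
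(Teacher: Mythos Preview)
Your proof is correct and takes a genuinely different, more direct route than the paper's.

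The paper first shows $\beta\in\mathcal{A}(\unon)$ by invoking Lemma~\ref{lem:formulaJstar}(ii) together with Lemma~\ref{lem:53} (which identifies $k,k+1\in I_x$ for a suitable $x\in\partial J^*(\unon)$), and then argues by contradiction: assuming some other $\alpha\in\mathcal{A}(\unon)$ beats $\beta$ in the $\theta$-objective, it performs a three-case analysis depending on whether $\alpha_k<\beta_k$, $\alpha_{k+1}<\beta_{k+1}$, or both, and in each case manufactures auxiliary coefficients violating (A3). The third case in particular is lengthy, introducing averaged points $q_k,q_{k+1}$ and two separate coefficient families $c_i^k,c_i^{k+1}$.

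Your argument bypasses all of this. By introducing the affine interpolants $\ell$ and $\psi$ through $(p_k,\gamma_k),(p_{k+1},\gamma_{k+1})$ and $(p_k,\theta_k),(p_{k+1},\theta_{k+1})$, you reduce the problem to two pointwise comparisons: $\gamma_i\geqslant\ell(p_i)$ (from convexity of $J^*$ and Lemma~\ref{lem:formulaJstar}(iii)), which both proves $\beta\in\mathcal{A}(\unon)$ and pins down the support of every $\alpha\in\mathcal{A}(\unon)$; and $\theta_i\geqslant\psi(p_i)$ on that support, obtained by a single clean application of (A3) with a two-point combination. The final inequality then follows by affinity of $\psi$ without any contradiction or case split. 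This is shorter and more transparent; the price is that it relies on the one-dimensional ordering a bit more implicitly (the two-point combinations exist precisely because the $p_i$ are totally ordered), whereas the paper's case analysis makes the combinatorics explicit. Your parenthetical remark correctly identifies the paper's use of Lemma~\ref{lem:53} as a detour that your convexity argument renders unnecessary.
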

\begin{proof}
Let $\bm{\beta}\coloneqq(\beta_1,\dots,\beta_m) \in \unitsim_{m}$ satisfy
\begin{equation*}
    \beta_k\coloneqq \frac{p_{k+1} - \unon}{p_{k+1}-p_k} \quad \text{ and } \quad
    \beta_{k+1} \coloneqq \frac{\unon - p_k}{p_{k+1}-p_k},
\end{equation*}
and $\beta_i = 0$ for every $i\in \{1,\dots,m\}\setminus\{k,k+1\}$. We will prove that $\bm{\beta}$ is a minimizer in Eq. (\ref{eqt:defH}) evaluated at $\unon$, that is,
\begin{equation*}
    \bm{\beta} \in \argmin_{\bm{\alpha}\in \mathcal{A}(\unon)} \left\{\sum_{i=1}^m \alpha_i \theta_i\right\},
\end{equation*}
where
\begin{equation*}
\mathcal{A}(\unon) \coloneqq
\argmin_{\substack{(\alpha_{1},\dots\alpha_{m})\in \unitsim_m\\
\sum_{i=1}^{m}\alpha_{i}p_{i}=\unon
}
}\left\{\sum_{i=1}^{m}\alpha_{i}\gamma_{i}\right\}.
\end{equation*}

First, we show that $\bm{\beta} \in \mathcal{A}(\unon)$. By definition of $\bm{\beta}$ and Lem. \ref{lem:formulaJstar}(ii) with $p = \unon$, the statement holds provided $k,k+1\in I_x$, where the set $I_x$ contains the maximizers in Eq. (\ref{eqt:defJ}) evaluated at $x \in \partial J^*(\unon)$. But if $x\in \partial J^*(\unon)$, we have $\unon\in \partial J(x)$, and Lem. \ref{lem:53} implies $k,k+1\in I_x$. Hence $\bm{\beta} \in \mathcal{A}(\unon)$.

Now, suppose that $\bm{\beta}$ is not a minimizer in Eq. (\ref{eqt:defH}) evaluated at $\unon$. By Lem. \ref{lem:Hprop}(i), there exists a minimizer in Eq. (\ref{eqt:defH}) evaluated at the point $\unon$, which we denote by $(\alpha_1,\dots, \alpha_m)$. Then there holds
\begin{equation}\label{eqt:pfcons_4eqts}
    \begin{cases}
    \sum_{i=1}^m \alpha_i = \sum_{i=1}^m \beta_i = 1,\\
    \sum_{i=1}^m \alpha_i p_i = \sum_{i=1}^m \beta_i p_i = \unon,\\
    \sum_{i=1}^m \alpha_i \gamma_i = \sum_{i=1}^m \beta_i \gamma_i = J^*(\unon),\\
    \sum_{i=1}^m \alpha_i \theta_i < \sum_{i=1}^m \beta_i \theta_i.
    \end{cases}
\end{equation}
Since $\alpha_i \geqslant 0$ for every $i$ and $\beta_i = 0$ for every $i\in \{1,\dots,m\}\setminus\{k,k+1\}$, we have $\alpha_k + \alpha_{k+1} \leqslant 1 = \beta_k + \beta_{k+1}$.
As $\bm{\alpha}\neq \bm{\beta}$, then one or both of the inequalities $\alpha_k < \beta_k$ and $\alpha_{k+1} < \beta_{k+1}$ hold. This leaves three possible cases, and we now show that each case leads to a contradiction.

    Case 1: Let $\alpha_k < \beta_k$ and $\alpha_{k+1} \geqslant \beta_{k+1}$. Define the coefficient $c_i$ by
    \begin{equation*}
        c_i \coloneqq \begin{dcases}
        \frac{\alpha_i - \beta_i}{\beta_k - \alpha_k}, & i\neq k,\\
        0, & i=k.
        \end{dcases}
    \end{equation*}
    The following equations then hold
    \begin{equation*}
        \begin{cases}
        (c_1,\dots, c_m)\in\Delta_m \text{ with } c_k = 0,\\
        \sum_{i\neq k}c_i p_i = p_k,\\
        \sum_{i\neq k}c_i \gamma_i = \gamma_k,\\
        \sum_{i\neq k}c_i \theta_i < \theta_k.
        \end{cases}
    \end{equation*}
    These equations, however, violate assumption (A3), and so we get a contradiction.
    
    Case 2: Let $\alpha_k \geqslant \beta_k$ and $\alpha_{k+1} < \beta_{k+1}$. A similar argument as in case 1 can be applied here by exchanging the indices $k$ and $k+1$ to derive a contradiction.
    
    Case 3: Let $\alpha_k < \beta_k$ and $\alpha_{k+1} < \beta_{k+1}$. From Eq. (\ref{eqt:pfcons_4eqts}), we obtain
    \begin{equation}\label{eqt:pfcons_4eqt_case2}
    \begin{cases}
    \beta_k - \alpha_k
         + \beta_{k+1} - \alpha_{k+1}
        = \sum_{i\neq k,k+1} \alpha_i,\\
    (\beta_k - \alpha_k)
           p_k 
         + (\beta_{k+1} - \alpha_{k+1})
           p_{k+1} 
        = \sum_{i\neq k,k+1} \alpha_i 
           p_i,\\
    (\beta_k - \alpha_k)
           \gamma_k
         + (\beta_{k+1} - \alpha_{k+1})
           \gamma_{k+1}
        = \sum_{i\neq k,k+1} \alpha_i 
           \gamma_i,\\
    (\beta_k - \alpha_k)\theta_k + (\beta_{k+1} - \alpha_{k+1})\theta_{k+1} > \sum_{i\neq k,k+1} \alpha_i \theta_i.
    \end{cases}
    \end{equation}
\def \qk {q_k}
\def \qkp {q_{k+1}}
    Define two numbers $\qk$ and $\qkp$ by
    \begin{equation}\label{eqt:pfcons_defqk}
        \begin{split}
            \qk \coloneqq \frac{\sum_{i<k} \alpha_ip_i}{\sum_{i<k}\alpha_i} \quad \text{ and } \quad
            \qkp \coloneqq \frac{\sum_{i>k+1} \alpha_ip_i}{\sum_{i>k+1}\alpha_i}.
        \end{split}
    \end{equation}
    Note that from the first two equations in (\ref{eqt:pfcons_4eqts}) and the assumption that $\alpha_k < \beta_k$ and $\alpha_{k+1} < \beta_{k+1}$, there exist $i_1<k$ and $i_2>k+1$ such that $\alpha_{i_1}\neq 0$ and $\alpha_{i_2}\neq 0$, and hence the numbers $\qk$ and $\qkp$ are well-defined. By definition, we have $\qk<p_k<p_{k+1}<\qkp$. Therefore, there exist $\ak,\akp\in(0,1)$ such that 
    \begin{equation}\label{eqt:pfcons_pkpkp}
    p_k = \ak \qk + (1-\ak)\qkp \quad \text{ and } \quad p_{k+1} = \akp \qk + (1-\akp)\qkp.     
    \end{equation}
    A straightforward computation yields
    \begin{equation}\label{eqt:pflem52_formak}
        \ak = \frac{\qkp - p_k}{\qkp - \qk} \quad\text{ and }\quad  \akp = \frac{\qkp - p_{k+1}}{\qkp - \qk}.
    \end{equation}
    
    Define the coefficients $\cik$ and $\cikp$ as follows
    \begin{equation}\label{eqt:pflem52_defcik}
    \begin{split}
        \cik \coloneqq \begin{dcases}
        \frac{\ak\alpha_i}{\sum_{\ind<k}\alpha_\ind}, & i<k,\\
        \frac{(1-\ak)\alpha_i}{\sum_{\ind>k+1}\alpha_\ind}, & i>k+1,\\
        0, &\text{otherwise},
        \end{dcases}\quad \text{ and }\quad 
        \cikp \coloneqq \begin{dcases}
        \frac{\akp\alpha_i}{\sum_{\ind<k}\alpha_\ind}, & i<k,\\
        \frac{(1-\akp)\alpha_i}{\sum_{\ind>k+1}\alpha_\ind}, & i>k+1,\\
        0, &\text{otherwise}.
        \end{dcases}
    \end{split}
    \end{equation}
    These coefficients satisfy $\cik, \cikp\in [0,1]$ for any $i$ and $\sum_{i=1}^m \cik= \sum_{i=1}^m \cikp=1$. In other words, we have
    \begin{equation}\label{eqt:pfcons_asmp4_1}
        (\conek,\dots, \cmk)\in\Delta_m \text{ with }\ckk = 0\quad \text{ and }\quad 
        (\conekp,\dots, \cmkp)\in\Delta_m \text{ with }\ckpkp = 0.
    \end{equation}
    Hence, the first equality in Eq. (\ref{eqt:assumption3}) holds for the coefficients $(\conek,\dots, \cmk)$ with the index $k$ and also for the coefficients $(\conekp, \dots, \cmkp)$ with the index $k+1$. We show next that these coefficients satisfy the second and third equalities in (\ref{eqt:assumption3}) and draw a contradiction with assumption (A3).
    
    Using Eqs. (\ref{eqt:pfcons_defqk}), (\ref{eqt:pfcons_pkpkp}), and (\ref{eqt:pflem52_defcik}) to write the formulas for $p_k$ and $p_{k+1}$ via the coefficients $\cik$ and $\cikp$, we find
    \begin{equation}\label{eqt:pfcons_pkpkp2}
    \begin{split}
        &p_k = \ak \frac{\sum_{i<k} \alpha_ip_i}{\sum_{i<k}\alpha_i} + (1-\ak)\frac{\sum_{i>k+1} \alpha_ip_i}{\sum_{i>k+1}\alpha_i}
        = \sum_{i\neq k,k+1}\cik p_i= \sum_{i\neq k}\cik p_i,\\
        &p_{k+1} = \akp \frac{\sum_{i<k} \alpha_ip_i}{\sum_{i<k}\alpha_i} + (1-\akp)\frac{\sum_{i>k+1} \alpha_ip_i}{\sum_{i>k+1}\alpha_i}=\sum_{i\neq k,k+1} \cikp p_i = \sum_{i\neq k+1} \cikp p_i,
        \end{split}
    \end{equation}
    where the last equalities in the two formulas above hold because $c_{k+1}^k=0$ and $c_k^{k+1} = 0$ by definition.
    Hence the second equality in Eq. (\ref{eqt:assumption3}) also holds for both the index $k$ and $k+1$.
    
    From the third equality in Eq. (\ref{eqt:pfcons_4eqt_case2}), assumption (A2), Eq. (\ref{eqt:pfcons_pkpkp2}), and Jensen's inequality, we have
    \begin{equation}\label{eqt:pflem52_cond3}
        \begin{split}
            &\sum_{i\neq k,k+1} \alpha_i\gamma_i
            = (\beta_k - \alpha_k)\gamma_k + (\beta_{k+1} - \alpha_{k+1})\gamma_{k+1}\\
            =\ & (\beta_k - \alpha_k)g(p_k) + (\beta_{k+1} - \alpha_{k+1})g(p_{k+1})\\
            =\ & (\beta_k - \alpha_k)g\left(\sum_{i\neq k,k+1}\cik p_i\right) + (\beta_{k+1} - \alpha_{k+1})g\left(\sum_{i\neq k,k+1}\cikp p_i\right)\\
            \leqslant\ & (\beta_k - \alpha_k)\left(\sum_{i\neq k,k+1}\cik g(p_i)\right) + (\beta_{k+1} - \alpha_{k+1})\left(\sum_{i\neq k,k+1}\cikp g(p_i)\right)\\
            =\ & \sum_{i\neq k,k+1} ((\beta_k - \alpha_k)\cik + (\beta_{k+1} - \alpha_{k+1})\cikp)g(p_i)\\
            =\ & \sum_{i\neq k,k+1} ((\beta_k - \alpha_k)\cik + (\beta_{k+1} - \alpha_{k+1})\cikp)\gamma_i.
        \end{split}
    \end{equation}
    We now compute and simplify the coefficients $(\beta_k - \alpha_k)\cik + (\beta_{k+1} - \alpha_{k+1})\cikp$ in the formula above. First, consider the case when $i<k$. 
    Eqs. (\ref{eqt:pflem52_formak}) and (\ref{eqt:pflem52_defcik}) imply
    \begin{equation*}
        \begin{split}
            &(\beta_k - \alpha_k)\cik + (\beta_{k+1} - \alpha_{k+1})\cikp\\
            =\ & (\beta_k - \alpha_k) \frac{\ak \alpha_i}{\sum_{\ind<k}\alpha_\ind} + (\beta_{k+1} - \alpha_{k+1})\frac{\akp\alpha_i}{\sum_{\ind<k}\alpha_\ind}\\
            =\ &\frac{\alpha_i}{\sum_{\ind<k}\alpha_\ind} ((\beta_k-\alpha_k)\ak + (\beta_{k+1} - \alpha_{k+1})\akp)\\
            =\ & \frac{\alpha_i}{\sum_{\ind<k}\alpha_\ind}\left((\beta_k-\alpha_k)\frac{\qkp - p_k}{\qkp - \qk} + (\beta_{k+1} - \alpha_{k+1})\frac{\qkp - p_{k+1}}{\qkp - \qk}\right)\\
            =\ & \frac{\alpha_i}{\sum_{\ind<k}\alpha_\ind}\cdot \frac{1}{\qkp - \qk} ((\beta_k - \alpha_k + \beta_{k+1} - \alpha_{k+1})\qkp - (\beta_k - \alpha_k) p_k - (\beta_{k+1} - \alpha_{k+1}) p_{k+1}).
        \end{split}
    \end{equation*}
    Applying the first two equalities in Eq. (\ref{eqt:pfcons_4eqt_case2}) and Eq. (\ref{eqt:pfcons_defqk})
    to the last formula above, we obtain
    \begin{equation*}
        \begin{split}
            &(\beta_k - \alpha_k)\cik + (\beta_{k+1} - \alpha_{k+1})\cikp\\
            =\ & \frac{\alpha_i}{\sum_{\ind<k}\alpha_\ind}\cdot \frac{1}{\qkp - \qk} \left(\left(\sum_{i\neq k,k+1}\alpha_i\right)\qkp - \sum_{i\neq k,k+1}\alpha_ip_i\right)\\
            =\ & \frac{\alpha_i}{\sum_{\ind<k}\alpha_\ind}\cdot \frac{1}{\qkp - \qk} \left(\sum_{i\neq k,k+1}\alpha_i\qkp - \sum_{i< k}\alpha_ip_i - \sum_{i> k+1}\alpha_ip_i\right)\\
            =\ & \frac{\alpha_i}{\sum_{\ind<k}\alpha_\ind}\cdot \frac{1}{\qkp - \qk} \left(\sum_{i\neq k,k+1}\alpha_i\qkp - \left(\sum_{i< k}\alpha_i\right)\qk - \left(\sum_{i> k+1}\alpha_i\right)\qkp\right)\\
            =\ & \frac{\alpha_i}{\sum_{\ind<k}\alpha_\ind}\cdot \frac{1}{\qkp - \qk} \left(\sum_{i<k}\alpha_i(\qkp - \qk)\right)\\
            =\ & \alpha_i.
        \end{split}
    \end{equation*}
    The same result for the case when $i>k+1$ also holds and the proof is similar. Therefore, we have
    \begin{equation}\label{eqt:pfcons_coeffs}
        (\beta_k - \alpha_k)\cik + (\beta_{k+1} - \alpha_{k+1})\cikp = \alpha_i \text{ for each } i\neq k, k+1.
    \end{equation}
    Combining Eqs. (\ref{eqt:pflem52_cond3}) and (\ref{eqt:pfcons_coeffs}), we have
    \begin{equation*}
        \begin{split}
            &\sum_{i\neq k,k+1} \alpha_i\gamma_i
            \leqslant \sum_{i\neq k,k+1} ((\beta_k - \alpha_k)\cik + (\beta_{k+1} - \alpha_{k+1})\cikp)\gamma_i
            = \sum_{i\neq k,k+1} \alpha_i\gamma_i.
        \end{split}
    \end{equation*}
    Since the left side and right side are the same, the inequality above becomes equality, which implies that the inequality in Eq. (\ref{eqt:pflem52_cond3}) also becomes equality. 
    In other words, we have
    \begin{equation}\label{eqt:lemB2_eq10_3}
    \begin{split}
        &\gamma_k = g\left(p_k\right)
            =\sum_{i\neq k,k+1}\cik g(p_i) = \sum_{i\neq k,k+1} \cik \gamma_i = \sum_{i\neq k} \cik \gamma_i,\\
        &\gamma_{k+1} = g\left(p_{k+1}\right) = \sum_{i\neq k,k+1}\cikp g(p_i) = \sum_{i\neq k,k+1} \cikp \gamma_i = \sum_{i\neq k+1} \cikp \gamma_i,
    \end{split}
    \end{equation}
    where the last equalities in the two formulas above hold because $c_{k+1}^k=0$ and $c_k^{k+1} = 0$ by definition. Hence the third equality in (\ref{eqt:assumption3}) also holds for both indices $k$ and $k+1$.
    
    In summary, Eqs. (\ref{eqt:pfcons_asmp4_1}), (\ref{eqt:pfcons_pkpkp2}), and (\ref{eqt:lemB2_eq10_3}) imply that Eq. (\ref{eqt:assumption3}) holds for the index $k$ with coefficients $(\conek,\dots, \cmk)$ and also for the index $k+1$ with coefficients $(\conekp, \dots, \cmkp)$. Hence, by assumption (A3), we find
    \begin{equation*}
    \sum_{i\neq k} \cik \theta_i > \theta_k
    \quad
    \text{ and }
    \quad
    \sum_{i\neq k+1} \cikp \theta_i > \theta_{k+1}.
    \end{equation*}
    Using the inequalities above with Eq. (\ref{eqt:pfcons_coeffs}) and the fact that $c_{k+1}^k=0$ and $c_k^{k+1} = 0$, we find
    \begin{equation*}
    \begin{split}
        &(\beta_k - \alpha_k) \theta_k + (\beta_{k+1} - \alpha_{k+1})\theta_{k+1}
        < (\beta_k - \alpha_k) \sum_{i\neq k} \cik \theta_i + (\beta_{k+1} - \alpha_{k+1} )\sum_{i\neq k+1} \cikp \theta_i\\
        =& \sum_{i\neq k,k+1} ((\beta_k - \alpha_k) \cik + (\beta_{k+1} - \alpha_{k+1} )\cikp)\theta_i 
        = \sum_{i\neq k, k+1} \alpha_i\theta_i,
    \end{split}
    \end{equation*}
    which contradicts the last inequality in Eq. (\ref{eqt:pfcons_4eqt_case2}). 

In conclusion, we obtain contradictions in all the three cases. As a consequence, we conclude that $\bm{\beta}$ is a minimizer in Eq. (\ref{eqt:defH}) evaluated at $\unon$ and  Eq. (\ref{eqt:pfcons_Hform}) follows from the definition of $H$ in (\ref{eqt:defH}).
\end{proof}

\subsection{Statement and proof of Lemma \ref{lem:51}} \label{sec:prooflem53}
\begin{lem}\label{lem:51}
Consider the one-dimensional case, i.e., $n=1$. Let $p_1,\dots,p_m\in\R$ satisfy $p_1<\dots<p_m$. Suppose assumptions (A1)-(A2) hold. Let $x\in\R$ and $t>0$. Assume $j,k,l$ are three indices such that $1\leqslant j\leqslant k<l\leqslant m$ and
\begin{equation} \label{eqt:lem51_assump}
    j, l \in \argmax_{i\in\{1,\dots,m\}}\{xp_{i} -t\theta_{i}-\gamma_{i}\}.
\end{equation}
Then there holds
\begin{equation}\label{eqt:lem51_conclusion}
    \frac{\theta_l - \theta_k}{p_l - p_k} \leqslant \frac{\theta_l - \theta_j}{p_l - p_j}.
\end{equation}
\end{lem}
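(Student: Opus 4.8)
The plan is to argue directly with the affine pieces of the maximum, using only that $j$ and $l$ are maximizers. For $i\in\{1,\dots,m\}$ set $v_i\coloneqq xp_i - t\theta_i-\gamma_i$. Hypothesis \eqref{eqt:lem51_assump} says that both $v_j$ and $v_l$ attain $\max_{i}v_i$, so in particular $v_j=v_l$ and $v_k\leqslant v_l$. If $j=k$, then \eqref{eqt:lem51_conclusion} holds with equality, so I may assume $j<k$; since the $p_i$ are strictly increasing (here (A1) is used), this gives $p_j<p_k<p_l$.

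From $v_j=v_l$, i.e.\ $xp_j-t\theta_j-\gamma_j = xp_l-t\theta_l-\gamma_l$, I solve for $x$ and obtain $x=\bigl(t(\theta_l-\theta_j)+(\gamma_l-\gamma_j)\bigr)/(p_l-p_j)$. From $v_k\leqslant v_l$, rearranging and dividing by $p_l-p_k>0$ gives the lower bound $x\geqslant\bigl(t(\theta_l-\theta_k)+(\gamma_l-\gamma_k)\bigr)/(p_l-p_k)$. Chaining these two relations and separating the $\theta$- and $\gamma$-terms yields
\begin{equation*}
t\left(\frac{\theta_l-\theta_k}{p_l-p_k}-\frac{\theta_l-\theta_j}{p_l-p_j}\right)\leqslant \frac{\gamma_l-\gamma_j}{p_l-p_j}-\frac{\gamma_l-\gamma_k}{p_l-p_k}.
\end{equation*}

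It then remains to check that the right-hand side is nonpositive. This is exactly the chordal (three-slopes) inequality for the convex function $g$ from assumption (A2): writing $p_k$ as a convex combination of $p_j$ and $p_l$ and applying Jensen's inequality to $g$, together with $g(p_i)=\gamma_i$, gives $\tfrac{\gamma_l-\gamma_j}{p_l-p_j}\leqslant\tfrac{\gamma_l-\gamma_k}{p_l-p_k}$. Dividing the displayed inequality by $t>0$ then produces \eqref{eqt:lem51_conclusion}.

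I do not anticipate a genuine obstacle here; the only points needing care are tracking the signs of $p_l-p_j$ and $p_l-p_k$ when dividing, and observing that assumption (A3) is not required and that only the two facts "$v_j=v_l$'' and "$v_k\leqslant v_l$'' extracted from \eqref{eqt:lem51_assump} — not the full global-maximizer property — actually enter the argument.
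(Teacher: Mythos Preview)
Your argument is correct. The algebra is clean: from $v_j=v_l$ you extract $x$, from $v_k\leqslant v_l$ you get a lower bound on $x$, and the comparison reduces the claim to the chordal inequality $\tfrac{\gamma_l-\gamma_j}{p_l-p_j}\leqslant\tfrac{\gamma_l-\gamma_k}{p_l-p_k}$, which is precisely the three-slopes lemma applied to the convex function $g$ from (A2) with $\gamma_i=g(p_i)$. Dividing by $t>0$ finishes.

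This is genuinely different from, and more economical than, the paper's proof. The paper arrives at the same two relations \eqref{eqt:pflem51_gamma}, but then, instead of invoking the three-slopes inequality for $g$ directly, it builds the $\gamma$-differences as telescoping sums $\gamma_l-\gamma_k=\sum_{i=k}^{l-1}x_i(p_{i+1}-p_i)$ with $x_i\in\partial J^*(q_i)$ for intermediate points $q_i\in(p_i,p_{i+1})$; this step calls on Lemma~\ref{lem:53}. The desired inequality is then obtained from the monotonicity of the subdifferential map $\partial J^*$ (giving $x_j\leqslant\cdots\leqslant x_{l-1}$) together with a mediant-type inequality on the resulting weighted averages. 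In effect the paper re-derives the chordal inequality through the machinery of $J^*$ and its subgradients, whereas you apply it to $g$ in one line. Your route is shorter, avoids the auxiliary Lemma~\ref{lem:53}, and makes transparent exactly which hypotheses are used (only $v_j=v_l$, $v_k\leqslant v_l$, and convexity of $g$; (A3) never enters). The paper's route has the minor advantage of staying inside the $J,J^*$ framework used elsewhere, but for this lemma that buys nothing extra.
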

\begin{proof}
Note that Eq. (\ref{eqt:lem51_conclusion}) holds trivially when $j=k$, so we only need to consider the case when $j<k<l$. On the one hand, Eq. (\ref{eqt:lem51_assump}) implies
\begin{equation*}
    xp_j - t\theta_j - \gamma_j = xp_l - t\theta_l - \gamma_l \geqslant xp_k - t\theta_k - \gamma_k,
\end{equation*}
which yields
\begin{equation} \label{eqt:pflem51_gamma}
    \begin{split}
        &\gamma_l - \gamma_k \leqslant x(p_l - p_k) - t(\theta_l - \theta_k),\\
        &\gamma_l - \gamma_j = x(p_l - p_j) - t(\theta_l - \theta_j).\\
    \end{split}
\end{equation}
On the other hand, for each $i\in \{j,j+1,\dots, l-1\}$ let $q_i\in (p_i, p_{i+1})$ and $x_i \in \partial J^*(q_i)$. Such $x_i$ exists because $q_i \in \inter{\dom J^*}$, so that the subdifferential $\partial J^*(q_i)$ is non-empty.
Then $q_i\in \partial J(x_i)$ and Lem. \ref{lem:53} imply
\begin{equation*}
    x_ip_i - \gamma_i = x_ip_{i+1} - \gamma_{i+1} = \max_{\ind\in \{1,\dots,m\}} \{x_i p_\ind - \gamma_\ind\}.
\end{equation*}
A straightforward computation yields
\begin{equation*}
    \begin{split}
        &\gamma_l - \gamma_k = \sum_{i=k}^{l-1} (\gamma_{i+1} - \gamma_i) = \sum_{i=k}^{l-1} x_i(p_{i+1} - p_i),\\
        &\gamma_l - \gamma_j = \sum_{i=j}^{l-1} (\gamma_{i+1} - \gamma_i) = \sum_{i=j}^{l-1} x_i(p_{i+1} - p_i).
    \end{split}
\end{equation*}
Combining the two equalities above with Eq. (\ref{eqt:pflem51_gamma}), we conclude that
\begin{equation*}
    \begin{split}
        &x(p_l - p_k) - t(\theta_l - \theta_k)\geqslant \sum_{i=k}^{l-1} x_i(p_{i+1} - p_i),\\
        &x(p_l - p_j) - t(\theta_l - \theta_j)= \sum_{i=j}^{l-1} x_i(p_{i+1} - p_i).
    \end{split}
\end{equation*}
Now, divide the inequality above by $t(p_l - p_k) > 0$ (because by assumption $t>0$ and $l>k$, which implies that $p_l>p_k$), divide the equality above by $t(p_l - p_j) > 0$ (because $l>j$, which implies that $t(p_l - p_j) \neq 0$), and rearrange the terms to obtain
\begin{equation}\label{eqt:pflem51_ratio}
\begin{split}
    &\frac{\theta_l - \theta_k}{p_l - p_k} \leqslant \frac{x}{t} - \frac{1}{t} \frac{\sum_{i=k}^{l-1} x_i(p_{i+1} - p_i)}{p_l - p_k},\\
    &\frac{\theta_l - \theta_j}{p_l - p_j} = \frac{x}{t} - \frac{1}{t} \frac{\sum_{i=j}^{l-1} x_i(p_{i+1} - p_i)}{p_l - p_j}.
\end{split}
\end{equation}
Recall that $q_j < q_{j+1} < \dots < q_{l-1}$ and $x_i \in \partial J^*(q_i)$ for any $j\leqslant i<l$. Since the function $J^*$ is convex, the subdifferential operator $\partial J^*$ is a monotone non-decreasing operator \cite[Def. IV.4.1.3, and Prop. VI.6.1.1]{hiriart2013convexI}, which yields $x_j\leqslant x_{j+1}\leqslant \dots\leqslant x_{l-1}$. Using that $p_1 < p_2 < \dots < p_m$ and $j<k<l$, we obtain
\begin{equation} \label{eqt:lemB2_ineq_sum}
    \frac{\sum_{i=k}^{l-1} x_i(p_{i+1} - p_i)}{p_l - p_k}
    \geqslant \frac{\sum_{i=k}^{l-1} x_k(p_{i+1} - p_i)}{p_l - p_k}
    = x_k
    = \frac{\sum_{i=j}^{k-1} x_k(p_{i+1} - p_i)}{p_k - p_j}
    \geqslant \frac{\sum_{i=j}^{k-1} x_i(p_{i+1} - p_i)}{p_k - p_j}.
\end{equation}
To proceed, we now use that fact that if four real numbers $a,c\in \R$ and $b,d>0$ satisfy $\frac{a}{b}\geqslant \frac{c}{d}$, then $\frac{a}{b}\geqslant \frac{a+c}{b+d}$. Combining this fact with inequality (\ref{eqt:lemB2_ineq_sum}), we find
\begin{equation*}
    \frac{\sum_{i=k}^{l-1} x_i(p_{i+1} - p_i)}{p_l - p_k}
    \geqslant \frac{\sum_{i=k}^{l-1} x_i(p_{i+1} - p_i) + \sum_{i=j}^{k-1} x_i(p_{i+1} - p_i)}{p_l - p_k + p_k - p_j}
    = \frac{\sum_{i=j}^{l-1} x_i(p_{i+1} - p_i)}{p_l - p_j}.
\end{equation*}
We combine the inequality above with  (\ref{eqt:pflem51_ratio})
to obtain 
\begin{equation*}
    \frac{\theta_l - \theta_k}{p_l - p_k} \leqslant \frac{\theta_l - \theta_j}{p_l - p_j}.
\end{equation*}
which concludes the proof.
\end{proof}

\subsection{Proof of Proposition \ref{thm:conservation}}
\label{subsec:pf_thmconservation}
Proof of (i): First, note that $u$ is piecewise constant. Second, recall that $J$ is defined as the pointwise maximum of a finite number of affine functions. Therefore, the initial data $u(\cdot,0) = \nabla J(\cdot)$ (recall that here, the gradient $\nabla$ is taken in the sense of distribution) is bounded and of locally bounded variation (see \cite[Chap. 5, page 167]{evans2015measure} for the definition of locally bounded variation). Finally, the flux function $H$, defined in Eq. (\ref{eqt:defH}), is Lipschitz continuous in $\dom J^*$ by Lem. \ref{lem:52}. It can therefore be extended to $\R$ while preserving its Lipschitz property \cite[Thm. 4.16]{folland2013real}. Therefore, we can invoke \cite[Prop. 2.1]{DAFERMOS197233} to conclude that $u$ is the entropy solution to the conservation law (\ref{eqt:conservation_1D}) provided it satisfies the two following conditions. Let $\bar{x}(t)$ be any smooth line of discontinuity of $u$. Fix $t>0$ and define $\um$ and $\up$ as
\begin{equation}\label{eqt:pfcons_defumup}
    \um \coloneqq \lim_{x\to\bar{x}(t)^-} u(x,t),\quad \text{ and } \quad \up \coloneqq \lim_{x\to\bar{x}(t)^+} u(x,t).
\end{equation}
Then the two conditions are:
\begin{itemize}
    \item[1.] The curve $\bar{x}(t)$ is a straight line with the slope 
    \begin{equation} \label{eqt:pfcons_cond1}
        \frac{d\bar{x}}{dt} = \frac{H(\up)-H(\um)}{\up - \um}.
    \end{equation}
    \item[2.] For any $\unon$ between $\up$ and $\um$, we have 
    \begin{equation}\label{eqt:pfcons_cond2}
    \frac{H(\up) - H(\unon)}{\up - \unon} \leqslant \frac{H(\up) - H(\um)}{\up - \um}.
    \end{equation}
\end{itemize}

\def \xm {x^-}
\def \xp {x^+}
First, we prove the first condition and Eq. (\ref{eqt:pfcons_cond1}). According to the definition of $u$ in Eq. (\ref{eqt:nn_conservation}), the range of $u$ is the compact set $\{p_1,\dots, p_m\}$. As a result, $\um$ and $\up$ are in the range of $u$, i.e., there exist indices $j$ and $l$ such that 
\begin{equation}\label{eqt:pfcons_defjl}
    \um = p_j,\quad \text{ and }\quad \up = p_l.
\end{equation}
Let $(\bar{x}(s),s)$ be a point on the curve $\bar{x}$ which is not one of the endpoints. Since $u$ is piecewise constant, there exists a neighborhood $\Ncal$ of $(\bar{x}(s),s)$ such that for any $(\xm,t), (\xp,t)\in \Ncal$ satisfying $\xm < \bar{x}(t) < \xp$, we have $u(\xm,t) = \um = p_j$ and $u(\xp,t) = \up = p_l$.
In other words, if $\xm, \xp, t$ are chosen as above, according to the definition of $u$ in Eq. (\ref{eqt:nn_conservation}), we have
\begin{equation}\label{eqt:pf_thmconserv_xminxplus}
    j\in \argmax_{i\in\{1,\dots,m\}}\{\xm p_{i} -t\theta_{i}-\gamma_{i}\} \quad \text{ and }\quad 
    l\in \argmax_{i\in\{1,\dots,m\}}\{\xp p_{i} -t\theta_{i}-\gamma_{i}\}.
\end{equation}
Define a sequence $\{\xm_k\}_{k=1}^{+\infty} \subset (-\infty,\bar{x}(s))$ such that $(\xm_k, s)\in\Ncal$ for any $k\in\N$ and $\lim_{k\to +\infty}\xm_k = \bar{x}(s)$. By Eq. (\ref{eqt:pf_thmconserv_xminxplus}), we have
\begin{equation*}
    \xm_k p_{j} -s\theta_{j}-\gamma_{j} \geq \xm_k p_{i} -s\theta_{i}-\gamma_{i} \text{ for any }i\in\{1,\dots,m\}.
\end{equation*}
When $k$ approaches infinity, the above inequality implies
\begin{equation*}
    \bar{x}(s) p_{j} -s\theta_{j}-\gamma_{j} \geq \bar{x}(s) p_{i} -s\theta_{i}-\gamma_{i} \text{ for any }i\in\{1,\dots,m\}.
\end{equation*}
In other words, we have 
\begin{equation}\label{eqt:pf_thmconserv_eq01}
    j\in \argmax_{i\in\{1,\dots,m\}}\{\bar{x}(s)p_{i} -s\theta_{i}-\gamma_{i}\}.
\end{equation}
Similarly, define a sequence $\{\xp_k\}_{k=1}^{+\infty} \subset (\bar{x}(s), +\infty)$ such that $(\xp_k, s)\in\Ncal$ for any $k\in\N$ and $\lim_{k\to +\infty}\xp_k = \bar{x}(s)$. Using a similar argument as above, we can conclude that 
\begin{equation}\label{eqt:pf_thmconserv_eq02}
    l\in \argmax_{i\in\{1,\dots,m\}}\{\bar{x}(s)p_{i} -s\theta_{i}-\gamma_{i}\}.
\end{equation}
By a continuity argument, Eqs. (\ref{eqt:pf_thmconserv_eq01}) and (\ref{eqt:pf_thmconserv_eq02}) also hold for the end points of $\bar{x}$.
In conclusion, for any $(\bar{x}(t), t)$ on the curve $\bar{x}$, we have
\begin{equation}\label{eqt:pfcons_jlmaximizer}
    j, l\in \argmax_{i\in\{1,\dots,m\}}\{\bar{x}(t)p_{i} -t\theta_{i}-\gamma_{i}\},
\end{equation}
which implies that
\begin{equation*}
    \bar{x}(t)p_{l} -t\theta_{l}-\gamma_{l} = 
    \bar{x}(t)p_{j} -t\theta_{j}-\gamma_{j}.
\end{equation*}
Therefore, the curve $\bar{x}(t)$ lies on the straight line 
\begin{equation*}
    x(p_l - p_j) - t(\theta_l - \theta_j) -(\gamma_l - \gamma_j) = 0
\end{equation*}
and Eq. (\ref{eqt:pfcons_defjl}) and Lem. \ref{lem:Hprop}(iii) imply that its slope equals
\begin{equation*}
    \frac{d\bar{x}}{dt} = \frac{\theta_l - \theta_j}{p_l - p_j} = \frac{H(\up) - H(\um)}{\up - \um}.
\end{equation*}
This proves Eq. (\ref{eqt:pfcons_cond1}) and the first condition holds.

It remains to show the second condition. Since $u$ equals $\nabla_x f$ and $f$ is convex by Thm. \ref{thm:constructHJ}, its corresponding subdifferential operator $u$ is monotone non-decreasing with respect to $x$ \cite[Def. IV.4.1.3 and Prop. VI.6.1.1]{hiriart2013convexI}. As a result, $\um < \up$ and $\unon\in (\um, \up)$, where we still adopt the notation $\um = p_j$ and $\up = p_l$. Recall that Lem. \ref{lem:Hprop}(iii) implies $H(p_i) = \theta_i$ for any $i$. Then, Eq. (\ref{eqt:pfcons_cond2}) in the second condition becomes
\begin{equation}\label{eqt:pfcons_cond2_2}
     \frac{\theta_l - H(\unon)}{p_l - \unon} \leqslant \frac{\theta_l - \theta_j}{p_l - p_j}.
\end{equation}
Without loss of generality, we may assume that $p_1< p_2< \dots < p_m$. Then the fact $p_j=\um < \up = p_l$ implies $j<l$. We consider the following two cases.

First, if there exists some $k$ such that $\unon = p_k$, then $H(\unon) = \theta_k$ by Lem. \ref{lem:Hprop}(iii).
Since $\um < \unon < \up$, we have $j < k < l$. Recall that Eq. (\ref{eqt:pfcons_jlmaximizer}) holds. Therefore the assumptions of Lem. \ref{lem:51} are satisfied, which implies Eq. (\ref{eqt:pfcons_cond2_2}) holds.

Second, suppose $\unon \neq p_i$ for every $i\in\{1,\dots,m\}$. 
Then there exists some $k\in \{j, j+1,\dots, l-1\}$ such that $p_k< \unon < p_{k+1}$. 
Lem. \ref{lem:52} then implies that Eqs. (\ref{eqt:pfcons_Hform}) and (\ref{eqt:lem52_defbetak}) hold, that is,
\begin{equation*}
    H(\unon) = \beta_k \theta_k + \beta_{k+1}\theta_{k+1}, \quad
    \unon = \beta_k p_k + \beta_{k+1} p_{k+1}, \quad \text{ and }\quad
    \beta_k + \beta_{k+1} = 1.
\end{equation*}
Using these three equations, we can write the left hand side of Eq. (\ref{eqt:pfcons_cond2_2}) as 
\begin{equation}\label{eqt:pfcons_cond2left}
    \frac{\theta_l - H(\unon)}{p_l - \unon}
    = \frac{\theta_l - \beta_k \theta_k - \beta_{k+1} \theta_{k+1}}{p_l - \beta_kp_k - \beta_{k+1}p_{k+1}}
    = \frac{\beta_k (\theta_l - \theta_k) + \beta_{k+1} (\theta_l-\theta_{k+1})}{\beta_k(p_l - p_k) + \beta_{k+1}(p_l - p_{k+1})}.
\end{equation}
If $k+1 = l$, then this equation become
\begin{equation*}
    \frac{\theta_l - H(\unon)}{p_l - \unon}
    = \frac{\theta_l - \theta_k}{p_l - p_k}.
\end{equation*}
Since $j\leqslant k<l$ and Eq. (\ref{eqt:pfcons_jlmaximizer}) holds, then the assumptions of Lem. \ref{lem:51} are satisfied. This allows us to conclude that Eq. (\ref{eqt:pfcons_cond2_2}) holds.
\\
If $k+1\neq l$, 
then using Eq. (\ref{eqt:pfcons_jlmaximizer}), the inequalities  $j\leqslant k<k+1< l$, and Lem. \ref{lem:51}, we obtain
\begin{equation*}
    \frac{\beta_k(\theta_l - \theta_k)}{\beta_k(p_l - p_k)} = \frac{\theta_l - \theta_k}{p_l - p_k} \leqslant \frac{\theta_l - \theta_j}{p_l - p_j} \quad \text{ and }\quad
    \frac{\beta_{k+1}(\theta_l - \theta_{k+1})}{\beta_{k+1}(p_l - p_{k+1})} = \frac{\theta_l - \theta_{k+1}}{p_l - p_{k+1}} \leqslant \frac{\theta_l - \theta_j}{p_l - p_j}.
\end{equation*}
Note that if $a_i \in \R$ and $b_i \in (0,+\infty)$ for $i\in\{1,2,3\}$ satisfy $\frac{a_1}{b_1} \leqslant \frac{a_3}{b_3}$ and $\frac{a_2}{b_2} \leqslant \frac{a_3}{b_3}$, then $\frac{a_1+a_2}{b_1+b_2}\leqslant \frac{a_3}{b_3}$. Then, since $\beta_k(p_l - p_k)$, $\beta_{k+1}(p_l - p_{k+1})$ and $p_l-p_j$ are positive, we have
\begin{equation*}
    \frac{\beta_k (\theta_l - \theta_k) + \beta_{k+1} (\theta_l-\theta_{k+1})}{\beta_k(p_l - p_k) + \beta_{k+1}(p_l - p_{k+1})}\leqslant \frac{\theta_l - \theta_j}{p_l - p_j}.
\end{equation*}
Hence Eq. (\ref{eqt:pfcons_cond2_2}) follows directly from the inequality above and Eq. (\ref{eqt:pfcons_cond2left}).

Therefore, the two conditions, including Eqs. (\ref{eqt:pfcons_cond1}) and (\ref{eqt:pfcons_cond2}), are satisfied and we apply \cite[Prop 2.1]{DAFERMOS197233} to conclude that the function $u$ is the entropy solution to the conservation law (\ref{eqt:conservation_1D}).

Proof of (ii) (sufficiency): Without loss of generality, assume $p_1<p_2<\dots<p_m$. Let $C\in \R$. Suppose $\tilde{H}$ satisfies $\tilde{H}(p_i)=H(p_i)+C$ for each $i\in\{1,\dots,m\}$ and $\tilde{H}(p)\geqslant H(p)+C$ for any $p\in[p_1,p_m]$. We want to prove that $u$ is the entropy solution to the conservation law (\ref{eqt:conservation_Htilde}).

As in the proof of (i), we apply \cite[Prop 2.1]{DAFERMOS197233} and verify that the two conditions hold through Eqs. (\ref{eqt:pfcons_cond1}) and (\ref{eqt:pfcons_cond2}). Let $\bar{x}(t)$ be any smooth line of discontinuity of $u$, define $\um$ and $\up$ by Eq. (\ref{eqt:pfcons_defumup}) (and recall that $\um = p_j$ and $\up = p_l$), and let $\unon\in (\um,\up)$. We proved in the proof of (i) that $\bar{x}(t)$ is a straight line, and so it suffices to prove that
\begin{equation}\label{eqt:pfcons_condii}
    \frac{d\bar{x}}{dt} = \frac{\tilde{H}(\up)-\tilde{H}(\um)}{\up - \um}, \quad \text{ and }\quad 
    \frac{\tilde{H}(\up) - \tilde{H}(\unon)}{\up - \unon} \leqslant \frac{\tilde{H}(\up) - \tilde{H}(\um)}{\up - \um}.
\end{equation}

We start with proving the equality in Eq. (\ref{eqt:pfcons_condii}). By assumption, there holds
\begin{equation}\label{eqt:pfcons_equal_H_Htilde}
\tilde{H}(\um) = \tilde{H}(p_j) = H(p_j)+C=H(\um)+C\quad \text{ and }\quad \tilde{H}(\up) = \tilde{H}(p_l) = H(p_l)+C=H(\up)+C.
\end{equation}
We combine Eq. (\ref{eqt:pfcons_equal_H_Htilde}) with Eq. (\ref{eqt:pfcons_cond1}), (which we proved in the proof of (i)), we obtain
\begin{equation*}
    \frac{d\bar{x}}{dt} = \frac{H(\up)-H(\um)}{\up - \um} = \frac{H(\up)+C-(H(\um)+C)}{\up - \um} = \frac{\tilde{H}(\up)-\tilde{H}(\um)}{\up - \um}.
\end{equation*}
Therefore, the equality in (\ref{eqt:pfcons_condii}) holds.

Next, we prove the inequality in Eq. (\ref{eqt:pfcons_condii}). Since $\unon\in (\um,\up)\subseteq [p_1,p_m]$, by assumption there holds $\tilde{H}(\unon) \geqslant H(\unon) +C$. Taken together with Eqs. (\ref{eqt:pfcons_cond2}) and (\ref{eqt:pfcons_equal_H_Htilde}), we get
\begin{equation*}
    \frac{\tilde{H}(\up) - \tilde{H}(\unon)}{\up - \unon} \leqslant \frac{H(\up)+C - (H(\unon)+C)}{\up - \unon} \leqslant \frac{H(\up) - H(\um)}{\up - \um} = \frac{\tilde{H}(\up) - \tilde{H}(\um)}{\up - \um},
\end{equation*}
which shows that the inequality in Eq. (\ref{eqt:pfcons_condii}) holds. 

Hence, we can invoke \cite[Prop 2.1]{DAFERMOS197233} to conclude that $u$ is the entropy solution to the conservation law (\ref{eqt:conservation_Htilde}).

Proof of (ii) (necessity): Suppose that $u$ is the entropy solution to the conservation law (\ref{eqt:conservation_Htilde}). We prove that there exists $C\in\R$ such that $\tilde{H}(p_i)=H(p_i)+C$ for any $i$ and $\tilde{H}(p)\geqslant H(p)+C$ for any $p\in[p_1,p_m]$. 

By Lem. \ref{lem:existdiffpt}, for each $i\in\{1,\dots,m\}$ there exist $x\in\R$ and $t>0$ such that 
\begin{equation}\label{eqt:pfcons_utakepi}
f(\cdot,t) \text{ is differentiable at }x, \text{ and }\nabla_x f(x,t)=p_i.    
\end{equation}
Moreover, the proof of Lem. \ref{lem:existdiffpt} implies there exists $T>0$ such that for any $0<t<T$, there exists $x\in\R$ such that Eq. (\ref{eqt:pfcons_utakepi}) holds. As a result, there exists $t>0$ such that for each $i\in\{1,\dots,m\}$, there exists $x_i\in\R$ satisfying Eq. (\ref{eqt:pfcons_utakepi}) at the point $(x_i,t)$, which implies $u(x_i,t) = p_i$. Note that $p_i\neq p_j$ implies that $x_i \neq x_j$. (Indeed, if $x_i = x_j$, then $p_i = \nabla_x f(x_i,t) = \nabla_x f(x_j,t) = p_j$ which gives a contradiction since $p_i \neq p_j$ by assumption (A1).) As mentioned before, the function $u(\cdot,t) \equiv \nabla_{x}f$ is a monotone non-decreasing operator and $p_i$ is increasing with respect to $i$, and therefore $x_1<x_2<\dots<x_m$. Since $u$ is piecewise constant, for each $k\in\{1,\dots,m-1\}$ there exists a curve of discontinuity of $u$ with $u = p_k$ on the left hand side of the curve and $u=p_{k+1}$ on the right hand side of the curve. Let $\bar{x}(s)$ be such a curve and let $\um$ and $\up$ be the corresponding numbers defined in Eq. (\ref{eqt:pfcons_defumup}). The argument above proves that we have $\um = p_k$ and $\up = p_{k+1}$. 

Since $u$ is the piecewise constant entropy solution, we invoke \cite[Prop 2.1]{DAFERMOS197233} to conclude that the two aforementioned conditions hold for the curve $\bar{x}(s)$, i.e., (\ref{eqt:pfcons_condii}) holds with $\um = p_k$ and $\up = p_{k+1}$. From the equality in (\ref{eqt:pfcons_condii}) and Eq. (\ref{eqt:pfcons_cond1}) proved in (i), we deduce
\begin{equation*}
    \frac{\tilde{H}(p_{k+1})-\tilde{H}(p_k)}{p_{k+1} - p_k} = \frac{\tilde{H}(\up)-\tilde{H}(\um)}{\up - \um} = \frac{d\bar{x}}{dt} = \frac{H(\up)-H(\um)}{\up - \um} = \frac{H(p_{k+1})-H(p_k)}{p_{k+1} - p_k}.
\end{equation*}
Since $k$ is an arbitrary index, the equality above implies that $\tilde{H}(p_{k+1})-\tilde{H}(p_k) = H(p_{k+1})-H(p_k)$ holds for any $k\in\{1,\dots,m-1\}$.
Therefore, there exists $C\in\R$ such that 
\begin{equation}\label{eqt:pfcons_ii_Hpi}
\tilde{H}(p_k) = H(p_k)+C \text{ for any }k\in \{1,\dots,m\}.    
\end{equation}

It remains to prove $\tilde{H}(\unon)\geqslant H(\unon)+C$ for all $\unon\in[p_k,p_{k+1}]$. If this inequality holds, then the statement follows because $k$ is an arbitrary index. We already proved that
$\tilde{H}(\unon)\geqslant H(\unon)+C$ for $u_0=p_k$ with $k\in\{1,\dots,m\}$. Therefore, we need to prove that $\tilde{H}(\unon)\geqslant H(\unon)+C$ for all $\unon\in(p_k,p_{k+1})$. Let $\unon \in (p_k,p_{k+1})$. By Eq. (\ref{eqt:pfcons_ii_Hpi}) and the inequality in (\ref{eqt:pfcons_condii}), we have
\begin{equation}\label{eqt:pfcons_ii_Hunon_ineqt}
    \frac{H(p_{k+1})+C - \tilde{H}(\unon)}{p_{k+1} - \unon} = \frac{\tilde{H}(\up) - \tilde{H}(\unon)}{\up - \unon} \leqslant \frac{\tilde{H}(\up) - \tilde{H}(\um)}{\up - \um} = \frac{H(p_{k+1}) - H(p_k)}{p_{k+1} - p_k}.
\end{equation}
By Lem. \ref{lem:52} and a straightforward computation, we also have
\begin{equation}\label{eqt:pfcons_ii_Hunon}
    \frac{H(p_{k+1}) - H(\unon)}{p_{k+1} - \unon} = \frac{H(p_{k+1}) - H(p_k)}{p_{k+1} - p_k}.
\end{equation}
Comparing Eqs. (\ref{eqt:pfcons_ii_Hunon_ineqt}) and (\ref{eqt:pfcons_ii_Hunon}), we obtain $\tilde{H}(\unon)\geqslant H(\unon)+C$. Since $k$ is arbitrary, we conclude that $\tilde{H}(\unon)\geqslant H(\unon)+C$ holds for all $\unon\in[p_1,p_m]$ and the proof is complete.

{\bf Competing interests}
 On behalf of all authors, the corresponding author (J. Darbon) states that there is no conflict of interest.

\bibliographystyle{spmpsci}

\end{document}